\documentclass[12pt,oneside,reqno]{amsart}
\RequirePackage[colorlinks,citecolor=blue,urlcolor=blue]{hyperref}
\usepackage{latexsym,amssymb,amsmath,amsfonts,amsthm}
\usepackage{amsthm,amsmath}
\usepackage{float}
\usepackage{enumerate}
\usepackage{fullpage}
\usepackage{bbm}
\usepackage{subfigure}
\usepackage{graphicx}
\usepackage[toc,page]{appendix}
\usepackage{amsfonts}
\usepackage[all]{xy}
\usepackage{caption}
\usepackage{booktabs}

\usepackage{geometry}
\geometry{a4paper,scale=0.8}

\usepackage{graphicx}
\usepackage{amssymb}
\usepackage{epstopdf}
\usepackage{verbatim}
\usepackage{color}
\usepackage{bbm, dsfont}
\DeclareGraphicsRule{.tif}{png}{.png}{`convert #1 `dirname #1`/`basename #1 .tif`.png}
\usepackage{verbatim}
\usepackage[utf8]{inputenc}
\setcounter{tocdepth}{1}

\usepackage[dvipsnames]{xcolor}
\definecolor{myred}{RGB}{251,154,133}
\definecolor{myblue}{RGB}{153,206,227}
\definecolor{mylightblue}{RGB}{0, 150, 255}
\definecolor{mygreen}{RGB}{32, 210, 64}
\definecolor{mygray}{RGB}{220, 220, 220}

\usepackage{tikz}
\usetikzlibrary{decorations.pathmorphing}
\tikzset{snake it/.style={decorate, decoration=snake}}
\usetikzlibrary{shapes.geometric,positioning,decorations.pathreplacing}

\usepackage{cases}

\newtheorem{theorem}{Theorem}

\newtheorem{lemma}{Lemma}[section]
\newtheorem{corollary}{Corollary}[section]
\newtheorem{proposition}{Proposition}[section]

\theoremstyle{definition}

\theoremstyle{remark}
\newtheorem{remark}{Remark}


\definecolor{myred}{RGB}{251,154,133}
\definecolor{myblue}{RGB}{153,206,227}
\definecolor{mylightblue}{RGB}{0, 150, 255}
\definecolor{mygreen}{RGB}{32, 210, 64}
\definecolor{mygray}{RGB}{220, 220, 220}



\DeclareFontFamily{OML}{rsfs}{\skewchar\font'177}
\DeclareFontShape{OML}{rsfs}{m}{n}{ <5> <6> rsfs5 <7> <8> <9>
	rsfs7 <10> <10.95> <12> <14.4> <17.28> <20.74> <24.88> rsfs10 }{}
\DeclareMathAlphabet{\mathfs}{OML}{rsfs}{m}{n}


\newcommand{\BN}{{\mathbb{N}}}

\newcommand{\BP}{{\mathbb{P}}}

\newcommand{\BR}{{\mathbb{R}}}

\newcommand{\CB}{{\mathcal{B}}}

\newcommand{\CD}{{\mathcal{D}}}

\newcommand{\CF}{{\mathcal{F}}}

\newcommand{\CN}{{\mathcal{N}}}

\newcommand{\ind}{{\mathbbm{1}}}


\newcommand{\prob}{{\bf P}}

\newcommand{\e}{{\bf E}}
\newcommand{\bae}{\begin{equation}\begin{aligned}}
		\newcommand{\eae}{\end{aligned}\end{equation}}

\newcommand{\ep}{\varepsilon}
\newcommand{\beq}{\begin{equation}}
	\newcommand{\eeq}{\end{equation}}




\DeclareFontFamily{OML}{rsfs}{\skewchar\font'177}
\DeclareFontShape{OML}{rsfs}{m}{n}{ <5> <6> rsfs5 <7> <8> <9>
	rsfs7 <10> <10.95> <12> <14.4> <17.28> <20.74> <24.88> rsfs10 }{}
\DeclareMathAlphabet{\mathfs}{OML}{rsfs}{m}{n}

\begin{document}

	\title{Rigorous justification of the Fokker-Planck equations of neural networks based on an iteration perspective}
	
	\author{Jian-Guo Liu}
	\address[Jian-Guo Liu]{Department of Mathematics and Department of Physics, Duke University}
	\email{jliu@phy.duke.edu}

	\author{Ziheng Wang}
	\address[Ziheng Wang]{Mathematical Institute, University of Oxford}
	\email{ wangz1@math.ox.ac.uk }
	
	\author{Yuan Zhang}
	\address[Yuan Zhang\footnote{NSFC Tianyuan Fund for Mathematics 12026606 }]
	{School of Mathematical Sciences, Peking University; Pazhou Laboratory, Guangzhou 510330, China}
	\email{zhangyuan@math.pku.edu.cn}

	\author{Zhennan Zhou}
	\address[Zhennan Zhou]
	{Beijing International Center for Mathematical Research, Peking University}
	\email{zhennan@bicmr.pku.edu.cn}
	
	\maketitle

\begin{abstract} In this work, the primary goal is to establish a rigorous connection between the Fokker-Planck equation of neural networks with its microscopic model: the diffusion-jump stochastic process that captures the mean field behavior of collections of neurons in the integrate-and-fire model. The proof is based on a novel iteration scheme: with an auxiliary random variable counting the firing events, both the density function of the stochastic process and the solution of the PDE problem admit series representations, and thus the difficulty in verifying the link between the density function and the PDE solution in each sub problem has been greatly mitigated. The iteration approach provides a generic frame in integrating the probability approach with PDE  techniques, with which we prove that the density function of the diffusion-jump stochastic process is indeed the classical solution of the  Fokker-Planck equation with a unique flux-shift structure.
\end{abstract}

\maketitle

\section{Introduction}
While various models emerge in neuroscience \cite{gerstner2002spiking,lapique1907recherches,Nykamp_thesis, tuckwell1988introduction}, one of the most active disciplines at the present time, the level of mathematical rigor in understanding the rational connections between these models is usually formal or empirical. When it comes to modeling  the dynamics of a large collection of interacting neurons, the integrate-and-fire model for the potential through the neuron cell membrane, which dates back to \cite{lapique1907recherches}, has received great attention. In this model, the collective behavior of neuron networks can be predicted by the stochastic process of a single neuron \cite{brunel2000dynamics, brunel1999fast,compte2000synaptic,Delarue2015,guillamon2004introduction,Inglis2015,mattia2002population,newhall2010cascade,newhall2010dynamics,renart2004mean,sirovich2006dynamics,Touboul2012} where the influence from the network is given by an average synaptic input by the mean-field approximation \cite{Delarue2015,Inglis2015, renart2004mean,Touboul2012}. The time evolution of the probability density function (abbreviated by p.d.f.) of the potential voltage is governed by a Fokker-Planck equation on the half space  with an unusual structure: constantly shifting the boundary flux to an interior point. This equation has been utilized by neuroscientists to explore the macroscopic behavior of neural networks, and has also attracted many mathematicians to investigate the unique solutions structures in the past decade \cite{caceres2011analysis, caceres2011numerical,Caceres2014,Caceres2019, Caceres2018, carrillo2013classical, hu2021structure, omurtag2000simulation}, which in turn have enriched the scientific interpretation of the integrate-and-fire model. 


In this paper, we focus on the single neuron approximation of the celebrated noisy Leaky Integrate-and-Fire (LIF) model for neuron networks, where the state variable $X_t$ denotes the membrane potential of a typical neuron within the network. In the LIF model, when the synaptic input of the network (denoted by $I(t)$) vanishes, the membrane potential relaxes to their resting potential $V_L$, and in the single neuron approximation, the synaptic input $I(t)$, which itself is another stochastic process, is replaced by a continuous-in-time counterpart $I_c (t)$ (see  e.g. \cite{brunel2000dynamics, brunel1999fast,mattia2002population,omurtag2000simulation,renart2004mean,sirovich2006dynamics}), which takes the drift-diffusion form
\beq
I \,dt \approx I_c \, dt= \mu_c \, dt+\sigma_c \,dB_t.
\eeq
Here, $B_t$ is the standard Brownian motion, and in principle the two processes $I_c (t)$ and $I(t)$ have the same mean and variance. Thus between the firing events, the evolution of the membrane potential is given by the following stochastic differential equation
\beq
\label{process}
d X_t=(-X_t+V_L+\mu_c)\,dt+\sigma_c \,dB_t.
\eeq
The next key component of the model is the firing-and-reseting mechanism: whenever the membrane voltage $X_t$ reaches a threshold value called the threshold or firing voltage $V_F$, it is immediately relax to a reset value $V_R$, where $V_R < V_F$. The readers may refer to \cite{renart2004mean} for a thorough introduction of this subject. It is worth mentioning that, numerous mathematical aspects of the LIF model and its variants have been studied (see e.g. \cite{Delarue2015,delarue2015global,Inglis2015,MR3244555,renart2004mean,Touboul2012}) besides its enormous significance in neuroscience.

There has been a growing interest in studying the partial differential equation problem for the dynamics of the probability density function that the stochastic process $X_t$ is associated with \cite{chevallier2017mean, chevallier2015microscopic,Delarue2015,delarue2015global,Inglis2015}.  We denote the density of the distribution of neuron potential voltage at time $t \ge 0$ by $f(x,t)$, $x \in (- \infty, V_F ]$.  At least from a heuristic viewpoint,  it is widely accepted that the p.d.f. $f(x,t)$ satisfies the following Fokker-Planck equation on the half line with a singular source term
\beq
\frac{\partial f}{\partial t}(x,t)+\frac{\partial}{\partial x}[h f(x,t)]- a \frac{\partial^2 f}{\partial x^2} (x,t) = N(t)\delta (x-V_R),\quad x \in (- \infty, V_{F}), \quad t>0,
\label{master equation0}
\eeq
where $N(t)$ denotes the mean firing rate. By formal calculations via Ito's calculus, we obtain the drift velocity $h = -x + V_L + \mu_c$ and diffusion coefficient $a= \sigma_c^2/2$.

The firing-and-reset mechanism in the stochastic process has led to multiple consequences in the PDE model.  First, since the neurons at the threshold voltage has instantaneous discharges where the density is supposed to vanish and due to the noisy leaky terms, we consider the following Dirichlet boundary conditions:
\beq
f(V_F,t)=0, \quad f(-\infty, t)=0, \quad \forall t \ge 0.
\eeq

Second, due to the Dirichlet boundary condition at $x=V_F$, there is a time-dependent boundary flux escaping the domain, and a Dirac delta source term is added to the reset location $x=V_R$ to compensate the loss. Noting that \eqref{master equation0} is the evolution of a p.d.f, therefore for all $t\ge 0$
\[
\int_{-\infty}^{V_F} f(x,t) \, dx = \int_{-\infty}^{V_F} f_{\text{in}}(x) \, dx = 1.
\]
The conservation of mass and the boundary condition characterize the magnitude of mean firing rate
\beq
N(t) := - a \frac{\partial f}{\partial x} (V_F, t) \ge 0.
\eeq
The PDE problem is completed by an appropriate initial condition $f(x,0)= f_{\text{in}}(x)$. 

Third, the firing events generate currents that propagate within the neuron networks, which is incorporated into this PDE model by expressing the drift velocity $h$ and the diffusion coefficient  $a$ as functions of the mean-firing rate $N(t)$. For example, it is assumed in quite a few works (see e.g. \cite{caceres2011analysis, caceres2011numerical, carrillo2013classical, hu2021structure}) that
\[
h(x,N)=-x+ b N, \quad a(N)= a_0 + a_1 N,
\]
where $b$, $a_0 >0$ and $a_1 \ge 0$ are some modeling parameters. When $b>0$, the neuron network is excitatory on average, and when $b<0$ the network is inhibitory. In particular, when $b=0$ and $a_1=0$, the PDE problem becomes linear, but the flux shift structure persists.

We remark that this delta source term on the right hand side of \eqref{master equation0} is equivalent to setting the equation on $(-\infty, V_R) \cup (V_R, V_F)$ instead and imposing the following conditions
\[
f(V_R^-,t)=f(V_R^+,t), \quad  a\frac{\partial }{\partial x}f(V_R^-,t) - a\frac{\partial }{\partial x}f(V_R^+,t) = N(t), \quad  \forall t \ge 0.
\]
The equivalence can be checked by directly integration by parts and we choose to use this form for the rest of the paper.

Due to the unique structure of the PDE problem, most conventional analysis methods do not directly apply,  and many recent works are devoted to investigate  the solution properties of such model and its various modifications, including the finite-time blow-up of weak solutions, the multiplicity of the steady solutions, the relative entropy estimate, the existence of the classical solutions, the structure-preserving numerical approximation, etc.  (see e.g. \cite{caceres2011analysis,caceres2011numerical,Caceres2014,Caceres2018,carrillo2013classical, hu2021structure} and the references therein). For the stochastic process \eqref{process}, as the jumping time for $X_t$ is determined by its hitting time, the classical It\^{o} calculus is not directly applicable. 


The primary goal of this paper is to show the rigorous derivation of the Fokker-Planck equation from the stochastic process. More specifically, we investigate whether and in which sense the probability density function $f(x,t)$ of the stochastic process $X_t$ satisfies the PDE model. In this paper, we choose the model parameters as follows
\beq \label{para_value}
V_L=V_R = 0, \quad \mu_c=0, \quad \sigma_c=\sqrt 2, \quad \text{and} \quad V_{F}=1.
\eeq
Let the distribution of $X_0$ be denoted by $\nu$, which is a probability measure compactly support on $(-\infty,1)$ and let $f_{\text{in}}(x)$ to denote the density function of $\nu$. Then $X_t \in ( -\infty, 1)$ is a stochastic process whose trajectory is $c\grave{a} dl\grave{a}g$ in time, and it evolves as an Ornstein–Uhlenbeck process
\beq
\label{eq:OU}
d X_t = - X_{t-} \,d t + \sqrt 2 \,d B_t,
\eeq
until it hits $1$. Whenever at time t, $X_t$ hits 1, it immediately jumps to $0$, i.e.
\beq  \label{eq:jumpc}
\text{if} \quad X_{t-}=1,  \quad X_t=0.
\eeq
Then we restart the O-U-like evolution independent of the past. We remark that \eqref{eq:OU} and \eqref{eq:jumpc} serve as a formal definition of the diffusion-jump process only for heuristic purposes and  the rigorous definition shall be presented in Section \ref{definition of process}. In this paper, we aim to show for any fixed $T>0$ that the associated density function $f(x,t)$ is indeed a classical solution to the PDE problem
\begin{equation}
	\label{eq_planck}
	\left\{
	\begin{aligned}
		&\frac{\partial f}{ \partial t} - \frac{\partial }{\partial x}\left( x f\right) - \frac{\partial ^2 f}{\partial x^2}=0, \quad x \in (-\infty,0) \cup (0,1), t\in (0,T],\\
		&f(0^-,t)=f(0^+,t), \quad  \frac{\partial}{\partial x}f(0^-,t)- \frac{\partial}{\partial x}f(0^+,t)=-\frac{\partial}{\partial x}f(1^-,t), \quad t\in (0,T],\\
		&f(-\infty,t)=0, \quad f(1,t)=0, \quad t\in [0,T],\\
		&f(x,0)=f_{\text{in}}(x), \quad x\in (-\infty,1).
	\end{aligned}
	\right.
\end{equation}

The processes of such type \eqref{eq:OU} and \eqref{eq:jumpc}  were first introduced by Feller \cite{MR47886, MR63607} (in terms of transition semigroups). In particular, \cite{MR63607} presents the Fokker-Planck equation of such processes (dubbed ``elementary return process" there) in a weak form, of which the proof is based on a Markov semigroup argument in \cite{MR47886}. See Theorem 9 of \cite{MR63607} for details.
Such processes have also been studied in later works such as \cite{MR2499861, MR0346904, MR3244555, MR3016590, MR573203, MR2673971}. More specifically, in \cite{MR2353702,MR2499861, MR3244555, MR3016590}, the authors are concerned with the spectral properties of the generator of the stochastic process or related models, and have shown the exponential convergences in time towards the stationary distribution. In particular, \cite{MR3244555} applied their results to a neuronal firing model driven by a Wiener process and computed the distribution of the first passage time. In the works \cite{MR573203,MR2673971}, the authors made more relaxed or modified assumptions on the stochastic process than those in \cite{MR0346904}, and proved the existence of pathwise solution of such process in a generalized sense.

Following the spirit of the pioneering work of Feller \cite{MR63607}, the focus of this paper is to rigorously establish the bridge between the density functions of such processes and the classical solutions of the Fokker-Planck equations to be specified as in \eqref{eq_planck}. From the technical perspective, there is no available mathematical tools to link the boundary condition at the firing  voltage and the jump condition at the reset voltage  (or equivalently, the singular delta source term) of the PDE model to stochastic model for a single neuron model. In \cite{caceres2011analysis,carrillo2013classical}, some heuristic arguments are provided to connect $N(t)$ to the rate of change of the expectation of the number of firing events, which is related to the synchronization behavior of the neuron networks. Whereas, such an interpretation is not applicable for a single neuron model. In this paper, we rigorously prove that for a single neuron the mean firing rate $N(t) = \sum_{n=1}^\infty f_{T_n}(t)$ where $f_{T_n}$ stands for the p.d.f. of the n-th jumping time of $X_t$.

The key strategy of our proofs in this paper is based on an iterated scheme: with the introduction of an auxiliary random variable counting the number of firing events, the probability density function of potential voltage  $f(x,t)$  allows a decomposition  as a summation of sub-density functions $\{f_n (x,t) \}_{n=0}^{\infty}$. Each sub-density naturally links to a less singular sub-PDE problem, and all the sub-PDE problems are connected successively by iteration: the escaping boundary flux of $f_n(x,t)$ serves as the singular source for $f_{n+1}(x,t)$. Among all the iterations, the first step from $f_0$ to $f_1$ exhibits strongest singularity at the source of the flux, and thus turns out to be the major technical difficulty in our proof. In order to tackle this obstacle, elaborated estimates on the regularities of $f_0$ have to be established. The first sub-PDE problem corresponds to the stochastic process killed at the first hitting time and there is a vast literature \cite{2019Global, 2017A, 2018A, 2017Particle, 2020Mean} concerned with the stochastic processes with no reset for the killed particles. In \cite{delarue2013first,delarue2015global}, the authors consider the process with firing-and-reseting as in this paper and have established the connections between the sub-density function and the PDE solution. They have proved that $f_0(t, x)$ is continuous in $(t, x)$ and continuously differentiable in x on $(0,T] \times (-\infty, 1]$ and admits Sobolev derivatives of order $1$ in $t$ and of order $2$ in $x$ on any compact subset of $(0,T] \times (-\infty, 1)$. However, these results are not strong enough to guarantee the existence of the classical solution to the whole problem (9). In fact, by analysing the Green's function for the parabolic equation on the half space, we get estimates for classical derivatives and high order regularity for $t$ in Proposition 3.1 and 3.2, which is essential for the iteration from $f_0$ to $f_1$. Besides, all the desired smoothness properties are maintained by the iteration scheme, and thanks to the decomposition, rigorous justification of the jump condition for each sub-PDE problem becomes tractable. Finally, with the exponential convergence of decomposition, we can pass to the limit, and  conclude the preserved properties on the original problem. This iteration scheme is inspired by the renewal nature of the stochastic process, which shares the spirit of Feller's original work in \cite{MR63607}, and provides a platform to combine the techniques from both the probability theory and the differential equations.  

It is worth noting that, as the first attempt to study the rigorous justification of the Fokker-Planck equations of neural networks from the stochastic model, we have only obtained the results for the linear cases. In particular, we could not incorporate the dependence on the mean firing rate in the drift velocity and in the diffusion coefficient yet, but we shall investigate those directions in the future.

The rest of the paper is outlined as follows. In Section \ref{sec:pre}, we summarize the main results of this work as well as give precise definition of the stochastic process and lay out the iterated scheme. In Section \ref{iteration}, we show that the density function of the stochastic process is indeed the mild solution of the PDE problem with certain smoothing properties, and we give a few remarks on the implications in the weak solution. For the rest of this work, we use $C$, $C_0$, $C_k$ and $C_T$ to denote generic constants.

\section{Preliminaries and Main Results} \label{sec:pre}

In this section, we present the main results of this paper in details, and also provide some technical preparations for the proofs, including the construction of the stochastic process, which serves as the precise definition, and the elaboration of the iterated strategy, accompanied by some elementary estimates.

\subsection{Main Results}\label{main result}
\

The stochastic process $X_t$ has been formally defined in \eqref{eq:OU} and \eqref{eq:jumpc}, but note that the rigorous  construction of such a process can be found in \eqref{process definition} below of Section \ref{definition of process}.  
We first suppose that the process $X_t$ starts from $0$, i.e. the distribution of $X_0$ is $f_{\text{in}}(x)=\delta(x)$. We state the first main result in the following
\begin{theorem}
\label{thm_forward}
The process $X_t$ as in \eqref{process definition} that starts from $0$ has a continuously evolved probability density function denoted by $f(x,t)$. $f(x,t)$ is a solution of \eqref{eq_planck} in the time interval $(0, T]$ for any given $0<T< +\infty$ and with initial condition $\delta(x)$ in the following sense:
\begin{itemize}
    \item[(\romannumeral1)]
     $N(t):=-\frac{\partial}{\partial x}f(1^-,t)$ is a continuous function for $t\in [0, T]$,
    \item[(\romannumeral2)]
    $f$ is continuous in the region $\{(x,t):-\infty< x \le 1, \ t\in (0, T] \}$,
    \item[(\romannumeral3)]
    $f_{xx}$ and $f_t$ are continuous in the region $\{(x,t): x\in (-\infty,0)\cup(0,1), \ t\in (0,T] \}$,
    \item[(\romannumeral4)]
    $f_x(0^-, t)$, $f_x(0^+, t)$ are well defined for $t\in (0, T]$
    \item[(\romannumeral5)]
    For $t\in (0, T]$, $f_x(x, t)\to 0$ when $x \to -\infty$,
    \item[(\romannumeral6)]
    Equations \eqref{eq_planck} are satisfied with $f(x,0)=\delta(x)$ in the following sense: for any $\varphi\in C_b(-\infty,1)$,
    \beq
    \lim_{t\to 0^+}\int_{-\infty}^1  \varphi(x) f(x,t)dx=\varphi(0).
    \eeq
\end{itemize}
\end{theorem}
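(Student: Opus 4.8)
The plan is to exploit the renewal structure of $X_t$ through the iterated decomposition announced in the introduction. Let $T_1<T_2<\cdots$ be the successive jump times and $\mathcal N_t$ the number of jumps up to time $t$. By the strong Markov property at the jump times (after each jump the process restarts at $0$, independently of the past), each sub-law $\mathbb{P}(X_t\in\cdot,\ \mathcal N_t=n)$ has a density $f_n(x,t)$: $f_0$ is the sub-probability density of the Ornstein--Uhlenbeck process killed at its first hitting time of $1$ and issued from $\delta_0$, so $f_0(x,t)=G(t,0,x)$, while for $n\ge1$
\beq
f_n(x,t)=\int_0^t N_{n-1}(s)\,G(t-s,0,x)\,ds,\qquad N_{n-1}(s):=-\partial_x f_{n-1}(1^-,s),
\eeq
where $G(\tau,y,x)$ is the Green's function (fundamental solution with a Dirichlet condition at $x=1$ and decay at $-\infty$) of $\partial_t-\partial_x(x\,\cdot\,)-\partial_x^2$; in particular $f(x,t)=\sum_{n\ge0}f_n(x,t)$. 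A direct computation with this representation shows that each $f_n$ solves the homogeneous equation on $(-\infty,0)\cup(0,1)$ with zero initial datum, vanishes at $x=1$ and at $-\infty$, and carries the internal jump $\partial_x f_n(0^-,t)-\partial_x f_n(0^+,t)=N_{n-1}(t)$ (with $N_{-1}\equiv0$, and $f_0$ smooth across $x=0$); moreover $N_n=N_0*N_{n-1}=N_0^{*(n+1)}$ and $N_{n-1}$ is exactly the p.d.f. of $T_n$.

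The core analytic input is a sharp description of $f_0=G(\cdot,0,\cdot)$ near the two corners $(x,t)=(1,0)$ and $(x,t)=(0,0)$. Writing $G$ as a perturbation of the whole-line Ornstein--Uhlenbeck kernel (equivalently, after the standard time change, of a half-line heat kernel) with the Dirichlet barrier at $x=1$, I would establish two-sided Gaussian bounds together with matching bounds for $\partial_x G$, $\partial_x^2 G$ and $\partial_t^k G$; these are Propositions 3.1 and 3.2. The consequences needed below are that $f_0$ is smooth in the open region, extends continuously up to $x=1$ with zero trace, converges weakly to $\delta_0$ as $t\to0^+$, has all $x$-derivatives Gaussian-decaying as $x\to-\infty$, and that the escaping flux $N_0(t)=-\partial_x G(t,0,1^-)$ is continuous on $[0,T]$ with $N_0(0)=0$; in fact $N_0(t)$ and its $t$-derivatives are $O(e^{-c/t})$ as $t\to0^+$ for some $c>0$, because the particle must traverse a unit distance, so $N_0$ is an admissible and very regular source for the next level. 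Proving these corner estimates with enough uniformity to feed them into the first iteration is, I expect, the main obstacle of the whole argument.

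Given $N_{n-1}\in C([0,T])$ vanishing at $0$, the Duhamel representation and the Green's function estimates show that $f_n$ is smooth away from $\{x=0\}\cup\{x=1\}$, is continuous up to $x=1$ with zero trace, has one-sided derivatives $\partial_x f_n(0^\pm,t)$ (a time-convolved single-layer potential estimate: the $x\to0$ singularity of $\partial_x G(\tau,0,\cdot)$ is integrated against the continuous $N_{n-1}$), and generates the new flux $N_n(t)=-\partial_x f_n(1^-,t)=(N_{n-1}*N_0)(t)\in C([0,T])$ with $N_n(0)=0$. The step $f_0\to f_1$ is the delicate one, since it convolves the singular kernel directly against $N_0$, whose regularity at $t=0$ is only barely enough; for $n\ge2$ the source $N_{n-1}$ is already the output of one parabolic smoothing, so everything is softer. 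One then proves by induction on $n$ the uniform bounds $\|f_n(\cdot,t)\|_{L^1}\le\mathbb{P}(\mathcal N_t=n)\le C^{\,n}T^{\,n}/n!$ ($T_1$ has a bounded density, whence this factorial decay by a Chernoff bound), together with matching bounds for $\partial_x f_n$, $\partial_x^2 f_n$, $\partial_t f_n$ on compact subsets of $\{x\neq0,1\}$, for $\partial_x f_n(0^\pm,\cdot)$, and for $N_n$ in $C([0,T])$; hence all these series converge absolutely, and locally uniformly in the interior.

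It then remains to collect the conclusions. Since $\sum_n f_n$ and the associated series of interior derivatives converge locally uniformly, $f=\sum_n f_n$ is a genuine probability density, continuous on $\{x\le1,\ t\in(0,T]\}$ with $f(1,t)=0$, with continuous $f_{xx},f_t$ on $\{x\in(-\infty,0)\cup(0,1),\ t\in(0,T]\}$, and it satisfies the bulk equation of \eqref{eq_planck} termwise, which yields (ii), (iii) and the PDE; items (iv) and (v) follow from the termwise one-sided-derivative and $x\to-\infty$ bounds. For (i) and the interface condition, put $N(t):=-\partial_x f(1^-,t)=\sum_{n\ge0}N_n(t)$, which is continuous on $[0,T]$ by uniform convergence of $\sum_n N_n$; summing the level-$n$ jump relations gives $\partial_x f(0^-,t)-\partial_x f(0^+,t)=\sum_{n\ge1}N_{n-1}(t)=\sum_{n\ge0}N_n(t)=N(t)$, which is precisely the interface condition of \eqref{eq_planck}, and incidentally $N(t)=\sum_{n\ge1}f_{T_n}(t)$. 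Finally, for (vi), given $\varphi\in C_b(-\infty,1)$ we split $\int\varphi f(x,t)\,dx=\int\varphi f_0(x,t)\,dx+\sum_{n\ge1}\int\varphi f_n(x,t)\,dx$; the first term tends to $\varphi(0)$ by $f_0(\cdot,t)\rightharpoonup\delta_0$, and the remainder is at most $\|\varphi\|_\infty\sum_{n\ge1}\|f_n(\cdot,t)\|_{L^1}=\|\varphi\|_\infty\,\mathbb{P}(T_1\le t)\to0$ as $t\to0^+$. This establishes (i)--(vi).
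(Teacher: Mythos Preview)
Your proposal follows the same iteration strategy as the paper: decompose $f=\sum_{n\ge 0}f_n$ according to the number of jumps, identify $f_0$ with the Dirichlet Green's function $G(0,0,\cdot,\cdot)$, show each $f_n$ solves a sub-PDE with boundary flux $N_{n-1}=f_{T_n}$, and sum. Two tactical differences are worth flagging.

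First, for the summability of the derivative series you invoke a factorial bound $\mathbb P(\mathcal N_t=n)\le C^nT^n/n!$ (via boundedness of $f_{T_1}$), whereas the paper uses the simpler geometric estimate coming from the recursive convolution $f_n=f_{n-1}\ast f_{T_1}$: since $\rho_T:=\int_0^T f_{T_1}(s)\,ds=\mathbb P(T_1\le T)<1$, one gets $\max_{t\le T}|\partial^\ell f_n(x_0,t)|\le \rho_T\max_{t\le T}|\partial^\ell f_{n-1}(x_0,t)|$, which immediately reduces all the interior regularity and decay statements for $f$ to those for $f_0$ alone. Your factorial route works too, but the paper's device is cleaner for passing derivative bounds through the iteration.

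Second, and more substantively, you derive the jump $\partial_x f_n(0^-,t)-\partial_x f_n(0^+,t)=N_{n-1}(t)$ by invoking a parabolic single-layer jump relation with merely continuous density $N_{n-1}$. The paper does \emph{not} take this shortcut: it proves the jump (and the uniform bound on $\partial_x f_1$) by integrating the PDE for $f_0$ and performing an integration by parts in the time variable, which genuinely requires $f_{T_1}\in C^1[0,T]$. Establishing this $C^1$ regularity is the most technical step of the paper (their Proposition~3.2, done via the time-change to a heat equation with moving boundary and a delicate bootstrap on $M(s)$). Your layer-potential argument, if carried out carefully for the Dirichlet OU Green's function (splitting $G$ into the free OU kernel plus a correction smooth near $x=0$, and applying Friedman's jump lemma after the standard time change), would sidestep that entire step. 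This is plausible and would be a real economy, but you should make it precise: the jump lemma is stated for the heat kernel on the line, so you need to control the drift and boundary corrections explicitly, and also check that the resulting one-sided limits are uniform enough in $t$ to give the bounds you need to iterate.
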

The proof of Theorem \ref{thm_forward} is shown in Section \ref{iteration}, which relies on an iteration approach. In fact, we decompose both the probability density of the stochastic process and the solution to equation \eqref{eq_planck} into series and show that there is a one-to-one correspondence between the two series representations.

Next we let the process start from any fixed $y<1$, this time we use $f^y(x,t)$ to denote the p.d.f the process $X_t$ in \eqref{process definition} start from $y$ and now the distribution of $X_0$ is $f_{\text{in}}(x)=\delta(x-y)$. With the same method, we get the following corollary immediately.
\begin{corollary}
\label{thm_forward y}
For any fixed $y\in(-\infty, 1)$, the process $X_t$ as in \eqref{process definition} that starts from $y$ has a continuously evolved probability density function denoted by $f^y(x,t)$. $f^y(x,t)$ is a solution of \eqref{eq_planck} in the time interval $(0, T]$ for any given $0<T< +\infty$ and with initial condition $\delta(x-y)$ in the following sense:
\begin{itemize}
    \item[(\romannumeral1)]
    $N^y(t)$ is a continuous function for $t\in [0, T]$,
    \item[(\romannumeral2)]
    $f^y$ is continuous in the region $\{(x,t):-\infty< x \le 1, \ t\in (0, T] \}$,
    \item[(\romannumeral3)]
    $\partial_{xx}f^y$ and $\partial_tf^y$ are continuous in the region $\{(x,t): x\in (-\infty,0)\cup(0,1), \ t\in (0,T] \}$,
    \item[(\romannumeral4)]
    $\partial_xf^y(0^-, t)$, $\partial_xf^y(0^+, t)$ are well defined for $t\in (0, T]$
    \item[(\romannumeral5)]
    For $t\in (0, T]$, $\partial_xf^y(x, t)\to 0$ when $x \to -\infty$,
    \item[(\romannumeral6)]
    Equations \eqref{eq_planck} are satisfied  with $f(x,0)=\delta(x-y)$ in the following sense: for any $\varphi\in C_b(-\infty,1)$,
    \beq
    \lim_{t\to 0^+}\int_{-\infty}^1  \varphi(x) f^y(x,t)dx=\varphi(y).
    \eeq
\end{itemize}
Moreover, for any fixed $\varepsilon_0>0$, the continuity in (\romannumeral1), (\romannumeral2), (\romannumeral3) and the convergence in (\romannumeral5) and (\romannumeral6) are uniform for $y\le1-\varepsilon_0$.
\end{corollary}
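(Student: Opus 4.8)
The guiding principle is that the process started from $y$ agrees with the process started from $0$ as soon as it has fired once, since it then resets to $0$ and, by the strong Markov property built into the construction in Section \ref{definition of process}, restarts as an independent copy of the $y=0$ process; hence only the ``first excursion'' is new. Concretely, let $f^y_0(x,t)$ denote the p.d.f. of $X_t$ killed at its first hitting time $T^y_1$ of $1$ (with $X_0=y$), and put $N^y_0(t):=-\partial_x f^y_0(1^-,t)$. I would first record the $y$-analogue of the $n=0$ step of Section \ref{iteration}: $N^y_0$ is the density of $T^y_1$, and decomposing the trajectory at $T^y_1$ yields the renewal identity
\begin{equation}\label{eq:renewal}
\begin{aligned}
f^y(x,t)&=f^y_0(x,t)+\int_0^t N^y_0(s)\,f(x,t-s)\,ds,\\
N^y(t)&=N^y_0(t)+\int_0^t N^y_0(s)\,N(t-s)\,ds,
\end{aligned}
\end{equation}
with $f=f^0$, $N=N^0$ the objects furnished by Theorem \ref{thm_forward}. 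Equivalently, \eqref{eq:renewal} is just the decomposition $f^y=\sum_{n\ge0}f^y_n$ of Section \ref{iteration} re-seeded at $f^y_0$: for $n\ge1$ the sub-problem for $f^y_n$ is structurally identical to the one for $f_n$ (same equation, same reset point $0$, zero initial datum), only with seed flux $N^y_0$ in place of $N_0$.

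The real work is to transfer Propositions 3.1 and 3.2 from $f_0$ to $f^y_0$. The sub-problem for $f^y_0$ is exactly the one those propositions treat, with the initial delta moved from $0$ to $y$; after the change of variables turning \eqref{eq:OU} into the heat equation it is the Green's function of a parabolic problem on a half-line whose source sits at distance $1-y$ from the absorbing boundary. I would rerun those estimates keeping explicit track of the starting point, so that every constant depends on $y$ only through the gap $1-y\ge\varepsilon_0$ and (for the spatial decay (\romannumeral5) and the weak limit (\romannumeral6)) through $|y|$. The outputs are: $N^y_0\in C[0,T]$ with $N^y_0(0)=0$ (indeed the first-passage flux is exponentially flat at $t=0$ because $1-y>0$); $f^y_0$ satisfies (\romannumeral2)--(\romannumeral5); and $f^y_0(\cdot,t)\rightharpoonup\delta(\cdot-y)$ as $t\to0^+$. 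Since $1-y\ge\varepsilon_0$, these bounds are if anything milder than, and uniform over the same range as, the $y=0$ ones.

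Granting this, the Corollary is read off from \eqref{eq:renewal} — equivalently, from rerunning the iteration of Theorem \ref{thm_forward} with seed $N^y_0$, whose contraction constant is unchanged, so that the series converges exactly as before. Continuity (\romannumeral2) of $f^y$ comes from continuity of $f^y_0$ together with that of the convolution of the continuous $N^y_0$ with the continuous $f$; the interior regularity (\romannumeral3) and the one-sided derivatives (\romannumeral4) follow by differentiating the convolution in \eqref{eq:renewal}, using that for $x\ne0$ the kernel $f(x,\tau)$ and $\partial_t f(x,\tau)$ tend to $0$ as $\tau\to0^+$ (the datum of $f$ is $\delta$ at $0$), so no $s$-boundary terms survive; continuity of $N^y$ on $[0,T]$ and the decay (\romannumeral5) are inherited the same way from Theorem \ref{thm_forward}; (\romannumeral6) holds because the first term of \eqref{eq:renewal} tests to $\varphi(y)$ while the convolution term is $O(\|\varphi\|_\infty t)$; and the equations in \eqref{eq_planck}, including the jump relation at $0$, hold because $f^y_0$ solves the equation on $(-\infty,1)$ with no jump while the jump contributions of the series assemble to $-\partial_x f^y(1^-,t)$ exactly as in Theorem \ref{thm_forward}. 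Tracking constants through $1-y$ and $|y|$ gives the asserted uniformity for $y\le1-\varepsilon_0$.

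The step I expect to be the bottleneck is the second one: carrying out the delicate half-line Green's function analysis of Propositions 3.1--3.2 for the shifted datum and, above all, verifying that no constant degenerates as $y$ sweeps the admissible range. The corner analysis near $(x,t)=(0,0)$ that was the crux of the $f_0\to f_1$ passage for $y=0$ is here replaced by the (milder, but to-be-quantified-uniformly) behavior of $f^y_0$ and its flux when the source lies at positive distance $1-y$ from the firing threshold; everything downstream of this is a faithful repetition of the argument already done for Theorem \ref{thm_forward}.
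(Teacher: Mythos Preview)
Your proposal is correct and essentially the same as the paper's approach—the paper's proof is literally the one-line remark that the argument is ``exactly same as in Theorem \ref{thm_forward}'' and is skipped, and your writeup is a faithful unpacking of what that entails. Your renewal identity \eqref{eq:renewal} and the observation that only the $n=0$ layer $f^y_0$ is genuinely new (with the subsequent layers borrowed from the $y=0$ case) are exactly what the paper later invokes, as \eqref{iteration y}, in proving Theorem \ref{thm_forward nu}.
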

The proof of Corollary \ref{thm_forward y} is exactly same as in Theorem \ref{thm_forward}, and is thus skipped.

The initial condition of the Fokker-Planck equation \eqref{eq_planck} corresponds to the initial distribution of the stochastic process $X_0$. We remark that, in the above cases the most arguments below are based on the initial condition of the process $X_0=y$ for any $y<1$, and the corresponding initial condition of the PDE problem becomes $f(x,0)=\delta(x-y)$. Although the initial condition is a singular function, we have shown that PDE has an instantaneous smoothing effect, while the solution coincide with the density function of the stochastic process. Since the problem is linear, the natural extension to general and proper initial conditions can be obtained by integration against the initial distribution (see e.g. \cite{delarue2013first} for a careful discussion).

\begin{theorem}
\label{thm_forward nu}
Let $\nu$ be a c.d.f. whose p.d.f. $f_{\text in}(x)\in C_c(-\infty,1)$. We assume that $f_{\emph{in}}(x)$ is continuous and supported in $(-\infty, 1-\varepsilon_0)$ for some $\varepsilon_0>0$. Then the process $X_t$ as in \eqref{process definition} that starts from p.d.f. $f_{\text{in}}(x)$ has a continuously evolved probability density function denoted by $f^\nu(x,t)$ with
\beq
\label{convolution}
f^\nu(x,t)=\int_{-\infty}^{1-\varepsilon_0}f^y(x,t)\nu(dy),\quad x\in(-\infty,1],\quad t>0,
\eeq
and $f^\nu(x,t)$ is a classical solution of \eqref{eq_planck} in the time interval $(0, T]$ for any given $0<T< +\infty$ with initial condition $f_{\emph{in}}(x)$ in the following sense:
\begin{itemize}
    \item[(\romannumeral1)]
    $N^\nu(t):=-\frac{\partial}{\partial x}f^\nu(1^-,t)$is a continuous function for $t\in [0, T]$,
    \item[(\romannumeral2)]
    $f^\nu$ is continuous in the region $\{(x,t):-\infty< x \le 1, \ t\in [0, T] \}$,
    \item[(\romannumeral3)]
    $\partial_{xx}f^\nu$ and $\partial_xf^\nu$ are continuous in the region $\{(x,t): x\in (-\infty,0)\cup(0,1), \ t\in [0,T] \}$,
    \item[(\romannumeral4)]
    $\partial_xf^\nu(0^-, t)$, $\partial_xf^\nu(0^+, t)$ are well defined for $t\in [0, T]$,
    \item[(\romannumeral5)]
    For $t\in (0, T]$, $\partial_xf^\nu(x, t)\to 0$ when $x \to -\infty$.
    \item[(\romannumeral6)]
    Equations \eqref{eq_planck} are satisfied with the $L^2$ convergence to the initial condition as $t\to 0^+$, i.e.
    \beq
    \label{L2 convergence}
    \lim_{t\to 0^+} \int_{-\infty}^{1}|f^\nu(x,t)-f_{\text{in}}(x)|^2dx=0.
    \eeq.
\end{itemize}
\end{theorem}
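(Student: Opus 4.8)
The strategy is to deduce Theorem \ref{thm_forward nu} from Corollary \ref{thm_forward y} by averaging over the initial state, exploiting that \eqref{eq_planck} is linear and that all the bounds in Corollary \ref{thm_forward y} are uniform over $y\le 1-\varepsilon_0$.

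\textbf{Step 1 (the representation \eqref{convolution}).} With the rigorous construction of the process from Section \ref{definition of process}, starting $X_0$ from $\nu$ amounts to first sampling $X_0=y$ according to $\nu$ and then running the diffusion-jump dynamics; hence for every Borel $A\subset(-\infty,1]$,
\[
\mathbf{P}(X_t\in A)=\int_{-\infty}^{1-\varepsilon_0}\Big(\int_A f^y(x,t)\,dx\Big)\nu(dy)=\int_A\Big(\int_{-\infty}^{1-\varepsilon_0} f^y(x,t)\,\nu(dy)\Big)dx,
\]
the exchange being Tonelli's theorem. This identifies the density $f^\nu(\cdot,t)$ with the right side of \eqref{convolution}.

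\textbf{Step 2 (regularity and the equation on $t>0$).} Fix $\varepsilon_0>0$. By Corollary \ref{thm_forward y}, the functions $f^y,\ \partial_x f^y,\ \partial_{xx}f^y,\ \partial_t f^y$ and the one-sided limits at $x=0^\pm$ and $x=1^-$ are continuous in $(x,t)$ on the respective regions, \emph{uniformly} in $y\le 1-\varepsilon_0$; since they are also continuous in $y$ they are jointly continuous, hence locally bounded uniformly in $y$. As $\nu$ is a probability measure supported in $(-\infty,1-\varepsilon_0]$, dominated convergence permits differentiation under the integral in \eqref{convolution}: $\partial_x f^\nu=\int\partial_x f^y\,\nu(dy)$, and likewise for $\partial_{xx}f^\nu$, $\partial_t f^\nu$, and the traces at $x=0^\pm$ and $x=1^-$; in particular $N^\nu(t)=-\partial_x f^\nu(1^-,t)=\int N^y(t)\,\nu(dy)$. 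Properties (i)--(v) for $f^\nu$ are then inherited termwise. Finally, every identity in \eqref{eq_planck} is linear in $f$, so integrating the equation, the two interface conditions, and the Dirichlet conditions satisfied by each $f^y$ against $\nu(dy)$ shows that $f^\nu$ satisfies them as well on $(0,T]$.

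\textbf{Step 3 (the initial layer --- the crux).} It remains to upgrade the $t\to 0^+$ convergence from the weak statement of Corollary \ref{thm_forward y}(vi) to the $L^2$ statement \eqref{L2 convergence}, and to extend the continuity of $f^\nu$ and its traces to $t=0$. I use the iteration decomposition $f^\nu=\sum_{n\ge 0}f^\nu_n$, where $f^\nu_0$ is the density of the process killed at the first firing time started from $\nu$ and, for $n\ge 1$, $f^\nu_n$ is produced from the single boundary-flux source at $x=0$. Since $f_{\text{in}}\in C_c$ is supported in $(-\infty,1-\varepsilon_0)$, $f^\nu_0(\cdot,t)$ is the Ornstein--Uhlenbeck evolution of $f_{\text{in}}$ with a Dirichlet condition at $x=1$; this operator generates an analytic, in particular strongly continuous, semigroup on $L^2(-\infty,1)$, and as $f_{\text{in}}$ is bounded, uniformly continuous and supported away from $x=1$ one obtains both $\|f^\nu_0(\cdot,t)-f_{\text{in}}\|_{L^2}\to 0$ and $f^\nu_0(\cdot,t)\to f_{\text{in}}$ uniformly on $(-\infty,1]$. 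For the firing corrections, the Green's-function bounds of Section \ref{iteration} give $\|G(\cdot,\tau)\|_{L^2}=O(\tau^{-1/4})$ and $\|G(\cdot,\tau)\|_{\infty}=O(\tau^{-1/2})$, whence
\[
\Big\|\sum_{n\ge 1}f^\nu_n(\cdot,t)\Big\|_{L^2}\le C\,t^{3/4}\sup_{[0,t]}N^\nu,\qquad
\Big\|\sum_{n\ge 1}f^\nu_n(\cdot,t)\Big\|_{\infty}\le C\,t^{1/2}\sup_{[0,t]}N^\nu .
\]
As $N^\nu$ is continuous with $N^\nu(0)=0$ (Step 2), both bounds vanish as $t\to 0^+$; combined with the previous estimates this yields \eqref{L2 convergence} together with continuity of $f^\nu$ up to $t=0$, completing the closed-interval versions of (ii)--(iv) and statement (vi).

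\textbf{Expected main obstacle.} Steps 1--2 are soft consequences of Corollary \ref{thm_forward y} and linearity. The genuine work is Step 3: controlling the initial layer in the $L^2$ sense rather than merely weakly. This requires the strong $L^2$-continuity of the Ornstein--Uhlenbeck--Dirichlet semigroup on the \emph{unbounded} half-line $(-\infty,1)$, and, more importantly, Green's-function estimates uniform in the number of firings so that the \emph{entire} tail $\sum_{n\ge1}f^\nu_n$ is $L^2$-small for small $t$. A secondary subtlety is promoting the ``uniform in $y$'' statements of Corollary \ref{thm_forward y} to joint continuity and local domination, so that differentiation under the integral in \eqref{convolution}, including the one-sided traces at the interface $x=0$ and at $x=1$, is fully rigorous.
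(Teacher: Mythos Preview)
Your Steps 1 and 2 match the paper's argument essentially verbatim: the paper also obtains \eqref{convolution} by conditioning on $X_0$, quotes the uniform-in-$y$ regularity from Corollary \ref{thm_forward y}, and differentiates under the integral to deduce (i)--(v) and the equation on $(0,T]$.

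Your Step 3, however, is a genuinely different route to \eqref{L2 convergence}. The paper does \emph{not} invoke strong $L^2$-continuity of the Dirichlet--OU semigroup. Instead it (a) first proves that $f^\nu(\cdot,t)$ is uniformly bounded for small $t$, by showing $\sum_{n\ge 1} f_n^y(x,t)\le Ct/(1-Ct)$ from the iteration and bounding $\int f_0^y(x,t)\,f_{\text{in}}(y)\,dy$ by the free OU kernel; (b) uses this uniform bound to reduce the $L^2$ statement to an $L^1$ statement; and (c) proves the $L^1$ convergence by splitting $\int_{-\infty}^1|f^\nu-f_{\text{in}}|$ into a far-field piece (controlled by a tail-probability lemma for $\mathbf P^y(X_t\le -M_0)$) and a bulk piece (controlled by a one-sided pointwise bound $f^\nu(x,t)\le f_{\text{in}}(x)+\varepsilon$ combined with conservation of mass). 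Your argument bypasses both auxiliary lemmas: you appeal to semigroup strong continuity for $f_0^\nu$ and to the Green's-function $L^2$ bound $\|G(0,0,\cdot,\tau)\|_{L^2}=O(\tau^{-1/4})$ together with continuity of $N^\nu$ on $[0,T]$ for the firing tail $\sum_{n\ge 1}f_n^\nu=\int_0^t G(0,0,\cdot,t-s)N^\nu(s)\,ds$. This is shorter and more PDE-flavored; its cost is that you are importing the strong-continuity fact for the OU--Dirichlet semigroup on the unbounded half-line, which is standard but external to the paper, whereas the paper's route stays entirely within the estimates already developed (Green's bound \eqref{Gest}, the iteration, and elementary probability).

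One minor caution: you claim the $L^\infty$ tail bound also yields continuity of $f^\nu$ and its $x$-derivatives up to $t=0$, i.e.\ the closed-interval versions of (ii)--(iv). For (ii) this is fine once you have uniform convergence $f_0^\nu(\cdot,t)\to f_{\text{in}}$, but (iii)--(iv) at $t=0$ would require differentiability of $f_{\text{in}}$, which is only assumed $C_c$. The paper's own proof likewise only derives (iii)--(iv) from Corollary \ref{thm_forward y}, whose corresponding statements live on $(0,T]$; so this is a wrinkle in the theorem's phrasing rather than a defect of your argument.
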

A proof can be found at the end of Section \ref{solutions}.
\begin{remark}
		It is not clear yet how to get the uniform estimates near the boundary of the domain and thus we suppose that the initial distribution compactly support on $(-\infty,1)$ in this paper. Actually, some recent works \cite{2018A} concerning related models progressed towards more general assumptions, from compactly supported to $o(1-x)$ decay near 1 and more recently $\mathcal{O}\left((1-x)^{\beta}\right)$ with $\beta \in(0,1)$.  Usually, the literature assumes $\mathcal{O}(1-x)$ decay near 1 (e.g. \cite{2015Qualitative}) and in Theorem 1.1 of  \cite{2019Global}, this boundary decay is linked with short-term regularity of the solutions. Thus the hypothesis of a compactly supported initial condition has deep consequences upon the smoothness of the solution in short time.
\end{remark}

\subsection{Construction of the Process}\label{definition of process}
\


For the rest of this section, we shall present some preliminaries of  the stochastic process. Firstly, we should give the process $X_t$ a precise definition in probability by following the construction by Gihman and Skorohod \cite{MR0346904}. We emphasize that, an addition process $n_t$ is introduced to count the number of jumping events of a trajectory that has taken place before time $t$.

On a given probability space $(\Omega, \CF, \prob )$, we consider a sequence of independent O-U processes
$$
\left\{Y_t^{(n)}\right\}_{n=1}^\infty
$$
with $Y_0^{(n)}=0$ for all $n\ge 1$. Note that an O-U process $Y_t$ starting from initial value $y_0$ is an SDE with a.s. pathwisely continuous strong solution. That is
\begin{equation}
\label{OU_sol}
Y_t=e^{-t}y_0+\sqrt 2 \int^t_0 e^{-(t-s)}dB_s
\end{equation}
with a normal p.d.f.:
\begin{equation}
N(e^{-t}y_0, 1-e^{-2t}).
\label{normal}
\end{equation}
For each $n\in \BN$, $t\in [0,\infty]$, define the natural filtration
$$
\CF^{(n)}_t=\sigma\left( Y_s^{(n)}: s\in [0,t)\right).
$$
I.e., $\CF^{(n)}_t$ represents the information carried by the path of the $n$th copy of O-U process by time $t$.  For all $n$, $\CF^{(n)}_\infty$ are abbreviated to $\CF^{(n)}$, which are easy to see to be jointly independent.  Now define their filtration
$$
\mathcal{G}_n=\sigma(\CF^{(k)}, \ k\le n), \ \mathcal{G}^n=\sigma(\CF^{(k)}, \ k\ge n)
$$
with the convention $\mathcal{G}_\infty=\mathcal{G}$.

For each $n$, let
\beq
\label{stopping time sequence}
\tau_n=\inf\left\{t\ge 0: Y_t^{(n)}=1 \right\}=\inf\left\{t\ge 0: \lim_{h\to t^-}Y_h^{(n)}=1 \right\}
\eeq
be the first time $Y_t^{(n)}$ hits 1, with the convention $\tau_0=0$. Moreover, for all $n\ge 0$ and $k\le n$, define
\beq
\label{stopping time}
T_n=\sum_{i=0}^n \tau_i,\quad T_{n,k}=\sum_{i=k+1}^n \tau_i.
\eeq

By definition, $\tau_n$ is a stopping time with respect to the natural filtration $\{\CF^{(n)}_t\}_{t\ge 0}$. And we have that $\{\tau_n\}_{n=1}^\infty$ is a sequence of i.i.d. r.v.'s with strictly positive expectation. Thus by the law of large numbers, $(\sum_{i=1}^n \tau_i)/n\to E[\tau_1]>0$ a.s., which implies that
$$
\prob\left(\sum_{i=k}^\infty \tau_i=\infty, \ \forall k\ge 1 \right)=1.
$$
Particularly, we have $T_n\to\infty$ a.s. as $n\to\infty$. Then within the almost sure event $A_0=\{\sum_{i=k}^\infty \tau_i=\infty, \ \forall k\ge 1\}$, we define $(X_t,n_t)$ as follows: for any $k\ge 1$
\beq
\label{process definition}
(X_t, n_t)= \big(Y^{(k)}_{t-T_{k-1}}, k-1\big)
\eeq
on $[T_{k-1},T_{k})$. And thus $T_k$ is interpreted as the $k$-th jumping time associated with $X_t$.

By definition, we have constructed a piecewise continuous path on $[0,\infty)$ for each $\omega\in A_0$, and thus a mapping from $A_0$ to $(D[0,\infty)\times \BN, \CD\times \CN)$ is clearly measurable with respect to $\mathcal{G}$, where $D[0,\infty)$ is the space of $c\grave{a} dl\grave{a}g$ paths. Here $\CD$ is the smallest sigma field generated by all coordinate mappings and $\CN$ is the trivial sigma field on $\BN$. In the rest of this paper, we will use the construction above as the formal definition of $(X_t,n_t)$, which is the stochastic process of interest.

Similarly, we can define the process $X_t$ starts from $y<1$ or starts from a distribution $\nu$. We denote the  probability measure of $(X_t,n_t)$ by $\prob^y(\cdot)$ and the expectation by $\e^y[\cdot]$. The meaning of $\prob^\nu(\cdot)$ and $\e^\nu[\cdot]$ are analogous. Using $F_{\tau_k}$/$F_{T_k}$ to denote the cumulative distribution function of $\tau_k$/$T_k$, it is immediate to see that for any $k$ and $t$, $\prob(\tau_k=t)\le \prob(Y^{(k)}_t=1)=0$. So $F_{\tau_k}$ and $F_{T_k}$ are always continuous.

\subsection{Properties of the Process and the Iteration Approach}
\


We derive some preliminary estimates for the process $(X_t,n_t)$, which manifest the solution properties and also motivate us to propose the iterated scheme.

It has been shown in \cite{MR0346904} the process $X_t$ constructed above is always Markovian. Now we are ready to show the following ``Strong Markovian" type result that allows us to later calculate the probability distribution of $(X_t, n_t)$ in an iterative fashion: for each integer $k\ge 0$, define
\beq
\label{iteration_cdf}
F_k(x,t)=\prob^0(X_t\le x, n_t=k),
\eeq
then we have
\begin{proposition}
\label{proposition_strong}
For any $x<1$, $k\ge 1$, and $t>0$,
\begin{equation}
\label{eq_strong}
F_k(x,t)=\e_{0}\left[\prob\left(Y^{(k+1)}_{t-T_k}\le x, \tau_{k+1}>t-T_k\right)\mathbbm{1}_{T_k<t} \right].
\end{equation}
And thus
\beq
\label{k-iteration}
F_k(x,t)=\int_0^t F_0(x,t-s)dF_{T_k}(s).
\eeq
\end{proposition}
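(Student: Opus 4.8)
The plan is to extract the identity directly from the piecewise construction \eqref{process definition}, exploiting the mutual independence of the copies $\{Y^{(n)}\}_{n\ge 1}$ that is built into the probability space. The key structural fact is that the $k$-th jump time $T_k=\sum_{i=0}^k\tau_i$ is $\mathcal{G}_k$-measurable --- each $\tau_i$ being a functional of $Y^{(i)}$ --- whereas, by \eqref{process definition}, the trajectory of $(X_t,n_t)$ on $[T_k,T_{k+1})$ is governed entirely by the fresh copy $Y^{(k+1)}$, whose $\sigma$-field $\CF^{(k+1)}$ is independent of $\mathcal{G}_k$. Hence, conditionally on $\mathcal{G}_k$, the post-$T_k$ part of $(X_t,n_t)$ is a verbatim copy of the process restarted from $0$; this is precisely the ``strong Markov at the jump times'' content of the proposition.

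I would carry this out in three steps. \emph{(1) Unwind the event.} Working inside the a.s.\ event $A_0$, \eqref{process definition} gives $\{n_t=k\}=\{T_k\le t<T_{k+1}\}$, and on this set $X_t=Y^{(k+1)}_{t-T_k}$; since $T_{k+1}=T_k+\tau_{k+1}$, on $\{T_k\le t\}$ the condition $t<T_{k+1}$ is the same as $\tau_{k+1}>t-T_k$. Therefore
\[
\{X_t\le x,\ n_t=k\}=\bigl\{Y^{(k+1)}_{t-T_k}\le x,\ \tau_{k+1}>t-T_k,\ T_k\le t\bigr\}
\]
up to a $\prob^0$-null set, and since $F_{T_k}$ is continuous (so $\prob^0(T_k=t)=0$) we may replace $T_k\le t$ by $T_k<t$. \emph{(2) Separate $T_k$ from the $(k+1)$-st copy.} Because $\bigl((Y^{(k+1)}_u)_{u\ge 0},\tau_{k+1}\bigr)$ is independent of $T_k$ and $(u,\omega)\mapsto Y^{(k+1)}_u(\omega)$ is jointly measurable, the map
\[
\Phi(s):=\prob\bigl(Y^{(k+1)}_{t-s}\le x,\ \tau_{k+1}>t-s\bigr),\qquad s\in[0,t),
\]
is bounded and measurable, and conditioning on $\mathcal{G}_k$ (equivalently, Fubini against the product of the law of $T_k$ with that of the $(k+1)$-st copy) gives
\[
F_k(x,t)=\e_0\bigl[\Phi(T_k)\,\mathbbm{1}_{T_k<t}\bigr],
\]
which is \eqref{eq_strong}. \emph{(3) Identify $\Phi$ with $F_0$.} Since the copies are identically distributed, taking $k=0$ in \eqref{process definition} (so $T_0=0$, $T_1=\tau_1$) yields $F_0(x,u)=\prob\bigl(Y^{(1)}_u\le x,\ \tau_1>u\bigr)$, hence $\Phi(s)=F_0(x,t-s)$; integrating against the law of $T_k$, whose c.d.f.\ is $F_{T_k}$, and using continuity of $F_{T_k}$ at $t$ to pass from $\int_{[0,t)}$ to $\int_0^t$, we obtain $F_k(x,t)=\int_0^t F_0(x,t-s)\,dF_{T_k}(s)$, which is \eqref{k-iteration}.

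I do not expect a genuine obstacle: the proposition is essentially a bookkeeping consequence of the construction. The two points that need care are (i) the measurability/independence step --- making precise that $T_k$ is a functional of $Y^{(1)},\dots,Y^{(k)}$ only, so that the conditional law of the segment after $T_k$ equals that of the process started afresh from $0$, which is where joint independence of the $\CF^{(n)}$ enters --- and (ii) the harmless but necessary appeal to the continuity of $F_{T_k}$ (valid for $k\ge 1$, as noted in Section \ref{definition of process}), used both to discard the boundary event $\{T_k=t\}$ and to read the expectation $\e_0[\,\cdot\,\mathbbm{1}_{T_k<t}]$ as a Lebesgue--Stieltjes integral against $F_{T_k}$.
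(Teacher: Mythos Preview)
Your proof is correct and complete; the identification of the event via \eqref{process definition}, followed by Fubini against the product law of $T_k$ and the $(k{+}1)$-st copy, is exactly the right mechanism, and you correctly flag both the joint measurability of $(u,\omega)\mapsto Y^{(k+1)}_u(\omega)$ and the continuity of $F_{T_k}$ as the points requiring care.

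The paper reaches the same conclusion but by a more laborious route: rather than invoking Fubini directly on the composition $Y^{(k+1)}_{t-T_k}$, it works with the complementary event $\{X_t>x,\ n_t=k\}$, discretizes the possible values of $T_k$ into dyadic subintervals $I_n^{(i)}(t)=(2^{-n}it,2^{-n}(i+1)t]$, and builds approximating events of the form $\{\inf_{s\in t-I_n^{(i)}(t)}Y^{(k+1)}_s>x,\ \tau_{k+1}>(1-2^{-n}i)t\}\cap\{T_k\in I_n^{(i)}(t)\}$; independence is applied at the discrete level, and pathwise continuity of $Y^{(k+1)}$ is then used to pass to the limit via monotone convergence. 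In effect, the paper re-proves by hand the measurability fact that you simply cite. Your argument is shorter and more transparent; the paper's has the minor advantage of making the role of path continuity completely explicit, but at the cost of a discretization that is not really needed once one observes (as you do) that a continuous adapted process is jointly measurable.
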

\begin{proof}
We only prove \eqref{eq_strong} and then \eqref{k-iteration} is obvious. First, note that $T_{k+1}=T_k+\tau_{k+1}$ and that the event
$$
\{n_t=k\}=\{T_k\le t, T_{k+1}>t\}.
$$
By Fubini's formula,
$$
\prob^0(n_t=k)=\e_{0}\left[ \prob\left(\tau_{k+1}>t-T_k\right)\mathbbm{1}_{T_k<t} \right].
$$
Thus it suffices to prove
$$
\prob^{0}(X_t> x, n_t=k)=\e_{0}\left[ \prob\left(Y^{(k+1)}_{t-T_k}> x, \tau_{k+1}>t-T_k\right)\mathbbm{1}_{T_k<t} \right].
$$
Let $A=\{X_t>x, n_t=k\}$ be our event of interest. For any $n\ge 1$ and any $0\le i\le 2^n-1$, we define interval
$$
I^{(i)}_n(t)=\left(2^{-n}it, 2^{-n}(i+1)t \right].
$$
Moreover, for any $s\in (0,t]$ and any $n$, one may define $\text{Id}(n,s)$ be the unique $i\le 2^n-1$ such that $s\in I^{(i)}_n(t)$. Now we define event
$$
A^{(i)}_n=\left\{\inf_{s\in t-I^{(i)}_n(t)} Y^{(k+1)}_s>x, \ \tau_{k+1}>(1-2^{-n}i)t \right\}\cap \{T_k\in I^{(i)}_n(t)\}
$$
and $A_n=\cup_{i=0}^{2^n-1} A^{(i)}_n$. By definition, $A^{(i)}_n\subset A$ for every feasible $n$ and $i$. Thus $\prob(A_n)\le \prob(A)$. On the other hand, for any $\omega\in \bar A=\{X_t>x, n_t=k, T_k<t\}$, The continuity of path in $Y^{(k+1)}$ guarantees that there has to be some $N<\infty$ such that for all $n\ge N$, $\omega \in A^{(\text{Id}(n,T_k(\omega)))}_n$ and thus $\prob^{0}(A_n)\to \prob^{0}(\bar A)=\prob^{0}(A)$ as $n\to \infty$. The last equality follows from the fact that $F_{T_k}$ is continuous.

Meanwhile, note that $T_k$ is independent to $Y^{(k+1)}$. We have
\begin{align*}
\prob^{0}(A_n)&=\sum_{i=0}^{2^n-1} \prob^{0}\left(T_k\in I^{(i)}_n(t)\right) \prob\left(\inf_{s\in t-I^{(i)}_n(t)} Y^{(k+1)}_s>x, \ \tau_{k+1}>(1-2^{-n}i)t  \right)\\
&=\e^{0}\left[\prob\left(\inf_{s\in t-I^{(\text{Id}(n, T_k))}_n(t)} Y^{(k+1)}_s>x, \ \tau_{k+1}>(1-2^{-n} \text{Id}(n, T_k))t  \right) \mathbbm{1}_{T_k<t} \right].
\end{align*}
Now noting that for any $0<h<t$, one may similarly have from the continuity of $Y^{(k+1)}$
$$
\prob\left(\inf_{s\in t-I^{(\text{Id}(n, T_k))}_n(t)} Y^{(k+1)}_s>x, \ \tau_{k+1}>(1-2^{-n} \text{Id}(n, h))t  \right) \to \prob\left(Y^{(k+1)}_{t-h}>x, \tau_{k+1}>t-h\right),
$$
we have \eqref{eq_strong} follows from monotone convergence.

\end{proof}

For any $t>0$, we first consider the case where no jumps have been made by time $t$. Note that $F_0(x,t)=P(X_t\le x, T_1>t)=\prob(Y^{(1)}_t\le x, \tau_1>t)$ for all $x\in (-\infty, 1)$. It is clear that $F_0(\cdot, t)$ induces a measure on $( (-\infty, 1), \CB)$, which is absolutely continuous with respect to the Lebesgue measure on $\BR$. The assertion above can be seen from the fact that for any measurable $A$, $\prob(Y^{(1)}_t\in A, \tau_1>t)\le \prob(Y^{(1)}_t\in A)$ and that $Y^{(1)}_t$ is a continuous random variable. Here we also use $F_0(\cdot,t)$  to denote the corresponding measure on $( (-\infty, 1), \CB)$. And let $f_0(x,t)$ be its density and $p_{\text{ou}}(x,t)$ denotes the p.d.f of $Y^{(1)}_t$. Thus we have
\beq
\label{decay at infinity}
f_0(x,t)\le p_{\text{ou}}(x,t)=\frac{1}{\sqrt{2\pi(1-e^{-2t})}}\exp{\{\frac{-x^2}{2(1-e^{-2t})}\}},
\eeq
which together with \eqref{normal} derives
\begin{equation}
\label{density}
f_{0}(x,t)\le \frac{1}{\sqrt{2\pi(1-e^{-2t})}}.
\end{equation}

\begin{lemma}
\label{CDF_continuity}
$F_0(x, t)$ is a bi-variate continuous function on $(-\infty,1]\times(0,\infty)$. Moreover, for any bounded continuous function $\varphi(x)$,
\beq
\label{eq_initial_delta}
\lim_{t\to 0^+}\e^0[\varphi(X_t)\ind_{n_t=0}]=\lim_{t\to 0^+}\int_{-\infty}^1  \varphi(x) f_0(x,t)dx=\varphi(0).
\eeq
\end{lemma}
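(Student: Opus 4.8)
The plan is to read everything off the representation $F_0(x,t)=\prob\big(Y^{(1)}_t\le x,\ \tau_1>t\big)$ recorded above, using only two soft facts already in hand: the sample paths of the O-U process $Y^{(1)}$ are almost surely continuous (so in particular $\tau_1>0$ a.s., since $Y^{(1)}_0=0$), and the laws of $Y^{(1)}_t$ for $t>0$ and of $\tau_1$ are both atomless --- the first because $Y^{(1)}_t$ has the bounded density recorded in \eqref{decay at infinity}, the second because $F_{\tau_1}$ is continuous. Since the integrand $\mathbbm{1}_{\{Y^{(1)}_t\le x\}}\mathbbm{1}_{\{\tau_1>t\}}$ is bounded by $1$, both assertions will follow from the bounded convergence theorem once the relevant almost sure convergences of indicators are in place.

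For the bi-variate continuity, fix $(x,t)$ with $x\le1$, $t>0$ and take $(x_n,t_n)\to(x,t)$ with $x_n\le1$. Path continuity gives $Y^{(1)}_{t_n}\to Y^{(1)}_t$ a.s. On $\{Y^{(1)}_t<x\}$ one has $Y^{(1)}_{t_n}<x_n$ for all large $n$, and on $\{Y^{(1)}_t>x\}$ one has $Y^{(1)}_{t_n}>x_n$ for all large $n$; on $\{\tau_1>t\}$ one has $\mathbbm{1}_{\{\tau_1>t_n\}}=1$ eventually, and on $\{\tau_1<t\}$ one has $\mathbbm{1}_{\{\tau_1>t_n\}}=0$ eventually. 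The exceptional sets $\{Y^{(1)}_t=x\}$ (which is $\prob$-null when $x<1$, and is contained in $\{\tau_1\le t\}$ when $x=1$, where the $x$-indicator is irrelevant on $\{\tau_1>t\}$) and $\{\tau_1=t\}$ together carry zero probability. Hence \[\mathbbm{1}_{\{Y^{(1)}_{t_n}\le x_n\}}\,\mathbbm{1}_{\{\tau_1>t_n\}}\ \longrightarrow\ \mathbbm{1}_{\{Y^{(1)}_t\le x\}}\,\mathbbm{1}_{\{\tau_1>t\}}\qquad\text{a.s.},\] and bounded convergence yields $F_0(x_n,t_n)\to F_0(x,t)$. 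One may instead split the two directions: \eqref{density} shows $F_0(\cdot,t)$ is Lipschitz in $x$ with a constant locally bounded in $t$, which handles the $x$-variable, while the $t$-variable is the above argument with $x_n\equiv x$, for which $\{\tau_1=t\}$ is the only null exception.

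For \eqref{eq_initial_delta}, the construction \eqref{process definition} gives $\e^0[\varphi(X_t)\mathbbm{1}_{\{n_t=0\}}]=\e[\varphi(Y^{(1)}_t)\mathbbm{1}_{\{\tau_1>t\}}]$, and since $f_0(\cdot,t)$ is by definition the density of the sub-probability measure $F_0(\cdot,t)$, this also equals $\int_{-\infty}^1\varphi(x)f_0(x,t)\,dx$. As $t\to0^+$ we have $\varphi(Y^{(1)}_t)\to\varphi(0)$ a.s.\ by continuity of $\varphi$ and of the path, and $\mathbbm{1}_{\{\tau_1>t\}}\to1$ a.s.\ because $\tau_1>0$ a.s.; since $|\varphi|\le\|\varphi\|_\infty$, the bounded convergence theorem gives $\lim_{t\to0^+}\e[\varphi(Y^{(1)}_t)\mathbbm{1}_{\{\tau_1>t\}}]=\varphi(0)$, which is \eqref{eq_initial_delta}.

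There is no genuinely hard step here; the only points requiring care are the identification of the $\prob$-null exceptional events in the indicator convergences --- precisely where the atomlessness of the laws of $Y^{(1)}_t$ and $\tau_1$ enters --- and the mild special treatment of the endpoint $x=1$, where $\{Y^{(1)}_t\le1,\ \tau_1>t\}=\{\tau_1>t\}$ so the $x$-indicator plays no role. If an analytic proof is preferred, joint continuity of $F_0$ (and more) would also follow a posteriori from the joint continuity of the killed O-U transition density established for $f_0$ later in Section \ref{iteration}; the probabilistic argument above has the advantage of being self-contained and of using only the facts assembled so far.
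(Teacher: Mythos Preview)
Your proof is correct and considerably more streamlined than the paper's. Both arguments ultimately rest on the same two null sets---$\{Y^{(1)}_t=x\}$ and $\{\tau_1=t\}$---but you exploit them more directly. The paper proves continuity in $x$ and $t$ separately: for the $t$-direction it bounds $|F_0(x_0,t)-F_0(x_0,t+\Delta t)|$ by the probability of a symmetric difference $\prob(A\Delta B)\le\prob(A_1\Delta B_1)+\prob(A_2\Delta B_2)$ and then controls each piece by hand; it then patches the two directions together with a triangle-inequality argument and runs a separate limsup computation for the boundary $x=1$. You instead take an arbitrary sequence $(x_n,t_n)\to(x,t)$ and observe that the product of indicators converges a.s.\ off the two null sets, so bounded convergence gives joint continuity in one stroke; the endpoint $x=1$ falls out of the same reasoning once one notes $\{Y^{(1)}_t=1\}\subset\{\tau_1\le t\}$ (in fact $\{Y^{(1)}_t=1\}$ is already null by the density of $Y^{(1)}_t$, so even that remark is slightly redundant). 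For \eqref{eq_initial_delta} the paper splits into $\{|Y^{(1)}_s|<\delta\}$ and its complement and invokes Doob's maximal inequality, whereas you simply use $\varphi(Y^{(1)}_t)\to\varphi(0)$ and $\mathbbm{1}_{\{\tau_1>t\}}\to1$ a.s.\ followed by bounded convergence. Your route is shorter and avoids the martingale inequality; the paper's route has the minor advantage of giving, as a by-product, the quantitative bound $\prob(\max_{s\le t}|Y^{(1)}_s|\ge\delta)\to0$, though this is not used elsewhere.
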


\begin{proof}
In order to prove this lemma, one may first show that for any $(x,t)\in (-\infty,1)\times(0,\infty)$, $F_0(\cdot,\cdot)$ is continuous at $(x,t)$ on both directions.

The continuity on the direction of $x$ is obvious since that for all $x'>x$,
$$
0\le F_0(x', t)-F_0(x, t)\le \prob\left(Y^{(1)}_t\in [x,x']\right)=\int_x^{x'} p_{\text{ou}}(y,t) dy
$$		
and the last term goes to $0$ as $x' \to x^+$.

Thus one may concentrate on proving continuity on the direction of $t$. Let $\Delta$ be the symmetric difference between events. One may first note that for any events $A=A_1\cap A_2$, and $B= B_1\cap B_2$,
\bae
\label{eq_symmetric}
\begin{aligned}
A\Delta B&=(A_1\cap A_2\cap B_1^c)\cup (A_1\cap A_2\cap B_2^c)\cup (A_1^c\cap B_1\cap B_2)\cup (A_2^c\cap B_1\cap B_2)\\
&\subset (A_1\cap B_1^c)\cup (A_2\cap B_2^c)\cup (A_1^c\cap B_1)\cup (A_2^c\cap B_2)\\
&=(A_1\Delta B_1)\cup (A_2\Delta B_2).
\end{aligned}
\eae
For any $t>0$, fixed $x_0$ and any $\Delta t$ sufficiently close to 0 (without loss of generality, one may assume $\Delta t>0$)
$$
\begin{aligned}
&F_0(x_0, t)=\prob(Y^{(1)}_t\le x_0, \ \tau_1>t)\\
&F_0(x_0, t+\Delta t)=\prob(Y^{(1)}_{t+\Delta t}\le x_0, \ \tau_1>t+\Delta t).
\end{aligned}
$$
Now let $A_1=\{Y^{(1)}_t\le x_0\}$, $A_2=\{\tau_1>t\}$, and $B_1=\{Y^{(1)}_{t+\Delta t}\le x_0\}$, $B_2=\{\tau_1>t+\Delta t\}$. By \eqref{eq_symmetric} we have
\begin{equation*}
\begin{aligned}
&\left|F_0(x_0, t)-F_0(x_0, t+\Delta t)\right|\\
\le &\prob(A\Delta B)\le \prob(A_1\Delta B_1)+\prob(A_2\Delta B_2)\\
=&\prob\left(Y^{(1)}_t\le x_0, Y^{(1)}_{t+\Delta t}> x_0\right)+\prob\left(Y^{(1)}_t> x_0, Y^{(1)}_{t+\Delta t}\le x_0\right)+\prob(\tau_1\in (t,t+\Delta t])\\
\le &\prob\left(\exists s\in [t, t+\Delta t], \ s.t. \ Y^{(1)}_s=x_0\right)+F_{\tau_1}(t+\Delta t)-F_{\tau_1}(t).
\end{aligned}
\end{equation*}
Recalling that $F_{\tau_1}(\cdot)$ is continuous,
$$
\lim_{\Delta t\to 0} F_{\tau_1}(t+\Delta t)-F_{\tau_1}(t)=0.
$$
At the same time, for any positive integer $n$, define event
$$
\Delta_n=\left\{\exists s\in \big[t, t+n^{-1}\big], \ s.t. \ Y^{(1)}_s=x_0\right\}.
$$
Note that
$$
\prob(\Delta_n)\to \prob(Y^{(1)}_t=x_0)=0 \quad as \quad n\to \infty.
$$
We can got the continuity of $t$.

Thus, one can show that $F_0(x,t)$ is binary continuous at $(x,t)$ as follows:
given $(x,t) \in (-\infty, 1)\times(0,+\infty)$ and any $\epsilon > 0, \exists 0<\delta<\frac {t}{2}$ such that for any $|t'-t|\le \delta$,
$$
|F_0(x, t')-F_0(x, t)|< \frac{\epsilon}{2}.
$$
And for any $s>\frac{t}{2}$ and any $|x'-x|\le \delta$ (Here without loss of generality, we ask $x<x'$)
$$
|F_0(x', s)-F_0(x, s)|\le \prob(Y_s^{(1)} \in [x, x'])<\frac{\epsilon}{2}.
$$
(The last inequality is because when $s<\frac t2$, the density of $Y_s^{(1)}$ can be bounded by a big enough constant $C$.)\\
Then for all $(x', t')\in (-\infty, 1)\times(0, \infty)$ such that $|t'-t|\le \delta, |x'-x|\le \delta$, we have
$$
|F_0(x', t')-F_0(x, t)|\le |F_0(x', t')-F_0(x, t')|+|F_0(x, t')-F_0(x, t)|<\epsilon
$$

Finally, we show that $F_0(x,t)$ is continuous at $x=1$. It suffice to prove that for any $t_n \to t$ and $\varepsilon_n \to 0^+$, we have $\lim_{n \to \infty} F_0(1-\varepsilon_n, t_n)=F_0(1,t)=\prob(\tau_1>t)$, i.e. $\lim_{n\to \infty}\prob(X_{t_n}\le 1-\varepsilon_n, \tau_1>t_n)=\prob(\tau_1>t)$, which is equivalent to
$$
\lim_{n\to \infty}\prob(X_{t_n}> 1-\varepsilon_n, \tau_1>t_n)=0.
$$
Set event $A_n=\{X_{t_n}>1-\varepsilon_n, \tau_1>t_n \}$ and we have
$$
\prob(\cup_{m \ge n}^{\infty} A_m)\le \prob(\exists s \in[\max_{m\ge n} t_m, \max_{m\ge n} t_m], \ s.t.\  X_s>1-\varepsilon_n, \tau_1>\min_{m\ge n} t_m).
$$
Note that $\limsup_{n\to \infty} \prob(A_n)\le \prob(\limsup A_n)\le \prob(X_t\ge 1, \tau_1\ge t)=0$. Thus we get $\lim_{n \to \infty} \prob(A_n)=0$ and get the result we want.

Finally to prove \eqref{eq_initial_delta}, recall that $\varphi$ is a bounded and continuous function. Thus $|\varphi(x)|\le M$ for all $x$, and for each $\ep>0$, there is a $0<\delta<1$ such that for all $x\in [-\delta,\delta]$, $|\varphi(x)-\varphi(0)|<\ep$. So we have
$$
\left|\e_0[\varphi(X_t)\ind_{n_t=0}]-\varphi(0)\right|\le \ep+ 2M \prob\left(\max_{s\le t}|Y^{(1)}_s|\ge \delta\right).
$$
Now recalling \eqref{OU_sol}
\beq
\label{eq_max_Y1}
|Y^{(1)}_t|\overset{d}{=}\left|\sqrt 2 \int^t_0 e^{-(t-s)}dB_s\right|\overset{d}{\le} \left|\sqrt 2 \int^t_0 e^{s}dB_s\right|,
\eeq
where the $d$ means the probability distribution. Note that the right hand side of \eqref{eq_max_Y1} forms a martingale. One immediately have
$$
\lim_{t\to 0^+}\prob\left(\max_{s\le t}|Y^{(1)}_s|\ge \delta\right)=0
$$
by Doob's inequality. Thus we have shown \eqref{eq_initial_delta} and then completed the proof.

\end{proof}

\begin{remark}
With Lemma \ref{CDF_continuity}, one may immediately have that $F(x_0, t)$ is a bounded and measurable function of $t\in [0,\infty)$.
\end{remark}

Moreover, the following corollary follows directly from Proposition \ref{proposition_strong}, Lemma \ref{CDF_continuity}, and a standard measure theory argument:

\begin{corollary}
\label{col_function_iteration_1}
For any bounded measurable function $f$, any integer $k\ge 1$ and any $t>0$, $\e\left[f(Y^{(1)}_t)\ind_{\tau_1>t}\right]$ is measurable with respect to $t$, and
\beq
\e^0\left[ f(X_t) \ind_{n_t=k}\right]=\e^0\left[ \e\left[f(Y^{(k+1)}_{t-T_k})\ind_{\tau_{k+1}>t-T_k}\right] \ind_{T_k<t}\right].
\eeq
\end{corollary}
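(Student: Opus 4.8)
The plan is to reduce the statement to Proposition \ref{proposition_strong} and Lemma \ref{CDF_continuity} together with a functional monotone class argument. Since $Y^{(1)},Y^{(2)},\dots$ are i.i.d.\ copies of the O-U process started at $0$ and $\tau_1,\tau_2,\dots$ are the associated hitting times of $1$, for every $k$ the map $s\mapsto \e[f(Y^{(k+1)}_s)\ind_{\tau_{k+1}>s}]$ coincides with $g_f(s):=\e[f(Y^{(1)}_s)\ind_{\tau_1>s}]$, so it suffices to analyse $g_f$ on $(0,\infty)$ (and set $g_f\equiv 0$ on $(-\infty,0]$, which is harmless below since it is multiplied by $\ind_{T_k<t}$).

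First I would prove the measurability claim. For $f=\ind_{(-\infty,x]}$ with $x<1$ we have $g_f(s)=F_0(x,s)$, which is continuous in $s$ on $(0,\infty)$ by Lemma \ref{CDF_continuity}, hence Borel; for $f\equiv 1$, $g_1(s)=\prob(\tau_1>s)=1-F_{\tau_1}(s)$ is measurable since $F_{\tau_1}$ is continuous. Let $\CH$ be the set of bounded Borel $f$ on $(-\infty,1)$ for which $s\mapsto g_f(s)$ is Borel measurable. Then $\CH$ is a vector space containing $1$ and the indicators of the $\pi$-system $\{(-\infty,x]:x<1\}$, which generates $\CB\big((-\infty,1)\big)$; and if $0\le f_n\uparrow f$ with $f_n\in\CH$ and $f$ bounded, then $g_{f_n}(s)\uparrow g_f(s)$ pointwise by monotone convergence, so $f\in\CH$. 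By the functional monotone class theorem $\CH$ contains every bounded Borel function, which is the first assertion; in particular $g_f(t-T_k)\ind_{T_k<t}$ is a well-defined bounded random variable for such $f$.

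Next I would bootstrap \eqref{eq_strong} to general $f$. Consider the linear functionals $L(f):=\e^0[f(X_t)\ind_{n_t=k}]$ and $R(f):=\e^0\big[g_f(t-T_k)\ind_{T_k<t}\big]$ on bounded Borel $f$. Equation \eqref{eq_strong} of Proposition \ref{proposition_strong} states exactly that $L$ and $R$ agree on every $\ind_{(-\infty,x]}$ with $x<1$, hence on all simple functions by linearity. For bounded Borel $f$, choose simple $f_n$ with $|f_n|\le\|f\|_\infty$ and $f_n\to f$ pointwise: then $L(f_n)\to L(f)$ by dominated convergence, while $g_{f_n}(s)\to g_f(s)$ for every $s$ by dominated convergence (with $|g_{f_n}|\le\|f\|_\infty$), so a second application of dominated convergence gives $R(f_n)\to R(f)$; therefore $L(f)=R(f)$, which is the claimed identity once we rewrite $g_f(s)=\e[f(Y^{(k+1)}_s)\ind_{\tau_{k+1}>s}]$.

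There is no genuine obstacle: the substance sits in Proposition \ref{proposition_strong} and Lemma \ref{CDF_continuity}, which are already in hand, and the rest is bookkeeping. The two points that require care are (a) establishing that $g_f$ is a measurable function of its time argument \emph{before} composing it with the random time $t-T_k$ (this is why continuity of $F_0$, rather than mere measurability of each slice $F_0(\cdot,t)$, is invoked, and why the measurability step must precede the identity step), and (b) keeping the bound $\|f\|_\infty$ uniform in $n$ so that the two nested passages to the limit by dominated convergence in the last step are legitimate.
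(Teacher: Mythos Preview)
Your proposal is correct and is exactly the ``standard measure theory argument'' that the paper invokes but does not write out: the paper states only that the corollary follows directly from Proposition~\ref{proposition_strong}, Lemma~\ref{CDF_continuity}, and a standard measure theory argument, and your functional monotone class reduction from indicators $\ind_{(-\infty,x]}$ is precisely that argument. There is nothing to add or correct.
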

%
Note that
\beq
\label{integral}
F_{\tau_{1}}(t)=1- P(\tau_1>t)=1-F_0(1, t)=1-\int_{-\infty}^1 f_0(x, t)dx
\eeq
and
\beq
\label{jump time convolution}
F_{T_{n}}=F_{\tau_{1}}\ast F_{\tau_{2}}\ast \cdots \ast F_{\tau_{n}}.
\eeq
Moreover, for each $n$, $F_n(\cdot, t)$ is absolutely continuous and let $f_n(x,t)$ denotes its density.

In the rest of this section, we use Proposition \ref{proposition_strong} and the similar renewal argument as in \cite{MR63607} to calculate the distribution of $X_t$. First one has the following lemma
\begin{lemma}
\label{CDF iteration lemma}
For all $n\ge 1, \ t>0$, and $x<1$,
\bae
\label{CDF_iterate}
F_n(x,t)=\int_0^t F_{n-1}(x, t-s)dF_{\tau_1}(s).
\eae
Moreover, $F_n(x,t)$ is also bi-variate continuous on $(-\infty,1]\times(0,\infty)$.
\end{lemma}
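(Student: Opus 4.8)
The plan is to prove the renewal-type identity \eqref{CDF_iterate} first for $x<1$, by combining Proposition~\ref{proposition_strong} with the fact that the $\tau_i$ are i.i.d., then to extend it to $x=1$ by a monotone limit, and finally to obtain the bi-variate continuity by induction on $n$ together with a dominated-convergence argument. Conceptually \eqref{CDF_iterate} just says that, conditionally on the first jump occurring at time $s\le t$, the process restarts from $0$ at time $s$ independently of the past and must then make exactly $n-1$ further jumps by time $t$ while ending below $x$; rather than re-prove a strong Markov statement at $\tau_1$, I will extract it from the already-established formula \eqref{k-iteration}.

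\emph{The identity for $x<1$.} By \eqref{k-iteration}, $F_n(x,t)=\int_0^t F_0(x,t-s)\,dF_{T_n}(s)$. Since $\tau_0=0$ we have $T_n=T_{n-1}+\tau_n$ with $T_{n-1}$ independent of $\tau_n$ and $\tau_n\overset{d}{=}\tau_1$, so by \eqref{jump time convolution} the law of $T_n$ is the convolution of the law $\mu$ of $T_{n-1}$ with the law $\nu$ of $\tau_1$. Writing this Lebesgue--Stieltjes integral as a double integral over $\{(r,u):r,u\ge 0,\ r+u\le t\}$ and applying Fubini (all integrands are nonnegative and at most $1$) yields
$$F_n(x,t)=\int_{[0,t]}\Bigl(\int_{[0,\,t-u]}F_0\bigl(x,(t-u)-r\bigr)\,\mu(dr)\Bigr)\,\nu(du).$$
For $n\ge 2$ the inner integral is $F_{n-1}(x,t-u)$ by a second use of \eqref{k-iteration} (with $k=n-1$ and time $t-u>0$); for $n=1$ it is $F_0(x,t-u)$ directly since $F_{T_1}=F_{\tau_1}$. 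As $\nu(du)=dF_{\tau_1}(u)$ this is \eqref{CDF_iterate}. The diagonal $\{r+u=t\}$ is $(\mu\otimes\nu)$-null because $F_{\tau_1}$ is continuous, so the (undefined) value of $F_0$ at time $0$ is irrelevant.

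\emph{Extension to $x=1$ and continuity.} Since $X_t<1$ almost surely by construction, $F_n(x,t)\uparrow\prob^0(n_t=n)=F_n(1,t)$ and $F_{n-1}(x,t-s)\uparrow F_{n-1}(1,t-s)$ as $x\uparrow 1$, so monotone convergence on both sides of \eqref{CDF_iterate} gives $F_n(x,t)=\int_0^t F_{n-1}(x,t-s)\,dF_{\tau_1}(s)$ for all $x\le 1$. I then argue the continuity by induction on $n$, the base case $n=0$ being Lemma~\ref{CDF_continuity}. Assuming $F_{n-1}$ continuous on $(-\infty,1]\times(0,\infty)$, fix $(x_0,t_0)$ there and any $(x_k,t_k)\to(x_0,t_0)$ with $x_k\le 1$, and write $F_n(x_k,t_k)=\int_{[0,\infty)}g_k(s)\,dF_{\tau_1}(s)$ with $g_k(s)=F_{n-1}(x_k,t_k-s)\ind_{0\le s\le t_k}$. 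Then $0\le g_k\le 1$, and $g_k(s)\to g_0(s)$ for every $s\ne t_0$: for $s<t_0$ this is the inductive continuity of $F_{n-1}$ at the interior-time point $(x_0,t_0-s)$, and for $s>t_0$ both sides are eventually $0$. Continuity of $F_{\tau_1}$ makes $\{s=t_0\}$ a $dF_{\tau_1}$-null set, so dominated convergence gives $F_n(x_k,t_k)\to F_n(x_0,t_0)$, closing the induction.

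\emph{Main obstacle.} The one subtle point is the integrand near $s=t_0$, where $t_k-s\to 0^+$ and, at the base of the induction, $F_0(x,\cdot)$ fails to be continuous at time $0$ (its limit there is $\ind_{x\ge 0}$). This is precisely what forces the use of the continuity of $F_{\tau_1}$, which confines that behaviour to the $dF_{\tau_1}$-null set $\{s=t_0\}$; the remainder of the argument is routine Fubini and dominated-convergence bookkeeping.
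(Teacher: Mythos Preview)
Your proof is correct. For the identity \eqref{CDF_iterate} you take essentially the same route as the paper: both arguments expand $T_n=T_{n-1}+\tau_n$, use independence and Fubini, and recognise the inner integral as $F_{n-1}(x,t-u)$ via \eqref{k-iteration}; the only cosmetic difference is that you phrase everything through \eqref{k-iteration} while the paper first rewrites $F_n$ as $\e_0[F_0(x,t-T_n)\ind_{T_n<t}]$.

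The continuity argument is where you genuinely diverge. The paper proves separate continuity in $t$ and in $x$ by an explicit $\epsilon$--$\delta$ estimate: it isolates a small window $(t_0-\delta_1,t_0]$ of $s$-values using the continuity of $F_{\tau_1}$, and on the complementary range invokes uniform continuity of $F_{n-1}(x_0,\cdot)$ on a compact time interval bounded away from $0$. You instead establish joint sequential continuity in one stroke via dominated convergence, using that $g_k(s)\to g_0(s)$ for all $s\ne t_0$ and that $\{s=t_0\}$ is $dF_{\tau_1}$-null. Your approach is cleaner and gives bi-variate continuity directly, whereas the paper's method makes the role of uniform continuity on compacts more explicit. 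Both rely on the same key input, the continuity of $F_{\tau_1}$, to neutralise the bad behaviour near $s=t_0$ where $t_k-s\to 0$; you correctly flag this as the main obstacle.
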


\begin{proof}
Suppose the lemma holds for $n-1\ge 0$, which has been shown true for $n=1$. By Proposition \ref{proposition_strong}, Lemma \ref{CDF_continuity}, and Fubini's formula
on the independent random variables $T_{n-1}$ and $\tau_n$
$$
\begin{aligned}
F_n(x,t)&=\prob(X_t\le x, n_t=n)=\e_{0}\left[ \prob\left(Y^{(n+1)}_{t-T_n}\le x, \tau_{n+1}>t-T_n\right)\mathbbm{1}_{T_n<t} \right]\\
&=\e_{0}\left[ F_0(x, t-T_n)\mathbbm{1}_{T_n<t} \right]=\e_{0}\left[ F_0(x, t-T_{n-1}-\tau_n)\mathbbm{1}_{T_{n-1}+\tau_n<t} \right]\\
&=\int_0^t\int_0^{t-s} F_{0}(x, t-s-h) dF_{T_{n-1}}(h) dF_{\tau_1}(s)\\
&=\int_0^t F_{n-1}(x, t-s)dF_{\tau_1}(s)
\end{aligned}
$$
and thus we have got \eqref{CDF_iterate}. With \eqref{CDF_iterate}, for any $t_0>0$ and $x_0<1$, the continuity of $F_n(x,t)$ at $(x_0, t_0)$ with respect to $t$ can be shown as follows: For any $\ep>0$, by the continuity of $F_{\tau_1}(t)$, there is a $\delta_1\in (0,t_0)$ such that
$$
F_{\tau_1}(t_0+\delta_1)-F_{\tau_1}(t_0-\delta_1)<\ep.
$$
Now note that $F_{n-1}(x_0, t)$ is continuous on $(0,\infty)$ and thus uniformly continuous on $[\delta_1/2,t_0+\delta_1]$. Thus there is a $\delta_2>0$ such that for all $t_1,t_2\in [\delta_1/2,t_0+\delta_1]$, $|t_1-t_2|<\delta_2$,
$$
|F_{n-1}(x_0, t_1)-F_{n-1}(x_0, t_2)|<\ep.
$$
Thus for any $t$ such that $|t-t_0|<\min\{\delta_1/2,\delta_2\}$ (here we may without loss of generality assume that $t<t_0$), one has
\begin{align*}
	|F_{n}(x_0, t_0)-F_{n}(x_0, t)|&\le \int_0^{t_0-\delta_1} |F_{n-1}(x_0, t_0-s)-F_{n-1}(x_0, t-s)| dF_{\tau_1}(s)\\
	&+\int_{t_0-\delta_1}^t F_{n-1}(x_0, t-s) dF_{\tau_1}(s)+\int_{t_0-\delta_1}^{t_0} F_{n-1}(x_0, t_0-s) dF_{\tau_1}(s)\\
	&\le \ep+ 2[F_{\tau_1}(t_0+\delta)-F_{\tau_1}(t_0-\delta)]\le 3\ep.
\end{align*}

Similarly, the continuity of $F_{n}(x,t)$ at $(x_0, t_0)$ with respect to $x$ is guaranteed by that $F_{n-1}(x,t)$ is continuous and thus uniformly continuous on $[x,x']\times[\ep, t]$ for all $\ep>0$ and that $F_{\tau_1}(\cdot)$ put no mass on point $t_0$. And with the similar argument in the last lemma to show $F_n(\cdot, \cdot)$ is bi-variate continuous, we complete the proof.

\end{proof}

With the same argument as before, we  have

\begin{corollary}
\label{col_function_iteration_2}
For any bounded measurable function $f$, any integer $k\ge 1$ and any $t>0$,
$$
\e_0\left[f(X_t)\ind_{n_t=k}\right]=\int_{-\infty}^{1} f(x) dF_k(x, t)
$$
is measurable with respect to $t$, and
\beq
\e_0\left[ f(X_t) \ind_{n_t=k}\right]=\int_{-\infty}^{1} f(x) dF_k(x, t)=\int_0^t \int_{-\infty}^{1} f(x) dF_{k-1}(x, t-s) dF_{T_1}(s).
\eeq
\end{corollary}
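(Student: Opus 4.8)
The plan is to deduce the identity from Lemma~\ref{CDF iteration lemma} by the same approximation scheme used to pass from Proposition~\ref{proposition_strong} to Corollary~\ref{col_function_iteration_1}. First I would record the elementary observation that $F_k(\cdot,t)$, regarded as the sub-probability measure $A\mapsto\prob^0(X_t\in A,\ n_t=k)$ on the Borel subsets of $(-\infty,1)$, is exactly the law of $X_t$ pushed forward and restricted to the event $\{n_t=k\}$; consequently, by the definition of the Lebesgue--Stieltjes integral against a pushforward measure,
\[
\e_0\!\left[f(X_t)\ind_{n_t=k}\right]=\int_{-\infty}^{1}f(x)\,dF_k(x,t)
\]
for every bounded measurable $f$. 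This yields the first displayed equality in the statement, and, applied with $k-1$ in place of $k$ at time $t-s$, also interprets the inner integral on the right-hand side as $\e_0[f(X_{t-s})\ind_{n_{t-s}=k-1}]$. Note that $F_k(\cdot,t)$ and $F_{k-1}(\cdot,t-s)$ charge only $(-\infty,1)$, so the value of $f$ at the endpoint $x=1$ is immaterial.

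Next I would establish the iteration identity in stages. For $f=\ind_{(-\infty,a]}$ with $a<1$ one has $\int_{-\infty}^1 f\,dF_k(\cdot,t)=F_k(a,t)$ and $\int_{-\infty}^1 f\,dF_{k-1}(\cdot,t-s)=F_{k-1}(a,t-s)$, so the claim reduces to
\[
F_k(a,t)=\int_0^t F_{k-1}(a,t-s)\,dF_{T_1}(s),
\]
which is precisely equation~\eqref{CDF_iterate} of Lemma~\ref{CDF iteration lemma}, once one notes that $T_1=\tau_0+\tau_1=\tau_1$ and hence $F_{T_1}=F_{\tau_1}$. By linearity the identity extends to all simple functions. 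Finally, for a general bounded measurable $f$ with $|f|\le M$, choose simple $f_j\to f$ pointwise with $|f_j|\le M$. Since every measure appearing --- $F_k(\cdot,t)$, each $F_{k-1}(\cdot,t-s)$, and $F_{T_1}$ on $[0,t]$ --- is finite, dominated convergence applied to the left-hand side, to the middle integral, and to the right-hand side (first to the inner $x$-integral, with dominating function the $s$-independent constant $M$, and then to the outer $s$-integral) upgrades the identity from $f_j$ to $f$.

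For the measurability assertion in $t$, note that for $f=\ind_{(-\infty,a]}$ the map $t\mapsto\int_{-\infty}^1 f\,dF_k(x,t)=F_k(a,t)$ is continuous on $(0,\infty)$ by Lemma~\ref{CDF iteration lemma}, hence Borel; linearity and pointwise limits of simple approximants then give Borel measurability of $t\mapsto\int_{-\infty}^1 f(x)\,dF_k(x,t)$ for every bounded measurable $f$, and the same reasoning shows $s\mapsto\int_{-\infty}^1 f(x)\,dF_{k-1}(x,t-s)$ is Borel on $(0,t)$, which is what is needed for the iterated integral on the right to be well defined. The argument is routine; the only point deserving a moment's care is the chaining of dominated convergence in the double integral --- one must check that the inner convergence is dominated uniformly in the outer variable $s$, which holds here because the bound $M$ is independent of $s$ and $F_{T_1}$ is a finite, atomless measure on $[0,t]$ (so the endpoints $s=0,\,t$, where $F_{k-1}(\cdot,0)$ degenerates, carry no mass).
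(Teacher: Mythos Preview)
Your proposal is correct and takes essentially the same approach as the paper: the paper's proof is omitted entirely, simply saying ``With the same argument as before,'' referring to the standard measure-theory argument (indicators $\to$ simple functions $\to$ bounded measurable via dominated convergence) that you have spelled out carefully, together with the observation that $T_1=\tau_1$ so that \eqref{CDF_iterate} is exactly the indicator case.
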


Our next lemma gives the exponential decay of $F_{n}(x,t)$ on a compact set of $t$, which is useful in our later calculations especially when we need to deal with the convergence of some series.

\begin{lemma}
	\label{lemma_exp}
	There is a $\theta>0$ such that $T\in (0,\infty)$
	\bae
	F_n(x, t)\le \exp(-\theta n+T)
    \label{eq_exp_decay}
	\eae
	for all $n\in \BN$, $t\le T$ and $x\in (-\infty,1]$.
\label{exp decay}
\end{lemma}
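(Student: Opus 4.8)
The plan is to discard the spatial variable at the outset — since $X_t\le 1$ always, we have $F_n(x,t)\le F_n(1,t)=\prob^0(n_t=n)$ — and then to bound $\prob^0(n_t=n)$ by the probability that the $n$-th firing has already occurred by time $t$. The latter is a left-tail event for a sum of i.i.d. strictly positive random variables, hence controllable by a Chernoff (exponential-moment) estimate, which is exactly the mechanism that produces a decay rate $\theta$ independent of $T$ while letting the $T$-dependence enter only through the factor $e^{T}$.

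First I would recall from the proof of Proposition \ref{proposition_strong} that $\{n_t=n\}=\{T_n\le t<T_{n+1}\}$, so that for every $x\le 1$ and every $t\le T$,
\[
F_n(x,t)\le \prob^0(n_t=n)\le \prob(T_n\le t)\le \prob(T_n\le T),
\]
where, by \eqref{stopping time}, $T_n=\tau_1+\cdots+\tau_n$ is a sum of $n$ i.i.d. copies of $\tau_1$ (using $\tau_0=0$). Since $\prob(\tau_1=0)\le\prob(Y^{(1)}_0=1)=0$, the variable $\tau_1$ is strictly positive almost surely, hence $e^{-\tau_1}<1$ a.s.\ and therefore $\rho:=\e\!\left[e^{-\tau_1}\right]<1$; set $\theta:=-\log\rho>0$, a constant depending only on the law of $\tau_1$.

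It then remains to apply Markov's inequality to the nonnegative random variable $e^{-T_n}$ together with the independence of the $\tau_i$:
\[
\prob(T_n\le T)=\prob\!\left(e^{-T_n}\ge e^{-T}\right)\le e^{T}\,\e\!\left[e^{-T_n}\right]=e^{T}\prod_{i=1}^{n}\e\!\left[e^{-\tau_i}\right]=e^{T}\rho^{\,n}=\exp(-\theta n+T).
\]
Combining this with the previous display yields \eqref{eq_exp_decay} for all $n\ge 1$, $t\le T$ and $x\in(-\infty,1]$; the case $n=0$ is immediate since $F_0\le 1\le e^{T}$.

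I do not expect a genuine obstacle here. The only point requiring care is that $\theta$ must not depend on $T$: a direct induction on the renewal identity of Lemma \ref{CDF iteration lemma} starting from the trivial bound $F_0\le 1$ would instead give something like $F_n(x,t)\le F_{\tau_1}(T)^{\,n}$, whose decay rate $-\log F_{\tau_1}(T)$ degrades to $0$ as $T\to\infty$. The exponential-moment argument above avoids this precisely because it separates the roles of $n$ and $T$ in the way the statement demands.
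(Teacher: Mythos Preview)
Your proposal is correct and follows essentially the same approach as the paper: both reduce to $F_n(x,t)\le \prob(T_n\le T)$ and then apply Markov's inequality to $e^{-T_n}$, using the i.i.d.\ structure of the $\tau_i$ together with $\prob(\tau_1>0)=1$ to get $\e[e^{-\tau_1}]=e^{-\theta}<1$. Your added remark contrasting this with the naive induction via Lemma~\ref{CDF iteration lemma} (which would give a $T$-dependent rate) is a nice clarification not present in the paper's proof.
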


\begin{proof}
	For any  $t\le T$ and $x\in (-\infty,1]$,
	$$
	F_n(x,t)= \prob(X_t\le x, n_t=n)\le \prob(n_t\ge n)= \prob (T_n\le t)\le \prob(T_n\le T).
	$$
	Thus it suffices to show that
	$$
	\prob(T_n\le T)\le \exp(-\theta n+T).
	$$
	Now recalling that $T_n=\sum_{i=1}^n \tau_i\in (0,\infty)$, define
	$$
	Y_n=\exp(-T_n)\in (0,1)
	$$
	where by the independence of $\{\tau_i, \ i\ge1\}$
	$$
	\e[Y_n]=\left(\e[\exp(-\tau_1)] \right)^n.
	$$
	Note that for a.s. $\omega, Y_t^{(1)}(\omega)$ is a continuous trajectory, which implies $\tau_1(\omega)>0\ a.s. $. Thus we have $\prob(\tau_1>0)=1$, which implies
	$$
	\e[\exp(-\tau_1)]=\exp(-\theta)<1
	$$
	for some $\theta>0$. Then the desired result follows from the Markov inequality for $Y_n$ and the fact that $\{T_n\le T\}=\{Y_n\ge \exp(-T)\}$.
\end{proof}

\begin{remark}
	The upper bound found in Lemma \ref{lemma_exp} is clearly not sharp, although it suffices the purpose in the later context.
\end{remark}

In light of the properties of joint process $(X_t,n_t)$ defined in \eqref{process definition} above, we have a new perspective to investigate the distribution of $X_t$.  Let $F(x, t)$ denote the cumulative distribution function of $X_t$. Based on the number of jumping times, it admits the following decomposition
\bae
\label{eq_CDF_sequence}
F(x, t)=\sum_{n=0}^\infty F_n(x, t).
\eae
There are two major types of results that we could obtain from the decomposition above.

On one hand, we immediately get the wellposedness and regularity properties of the distribution of $X_t$ at a given time, which are not easily achievable due to the complication of jumps.  We observe the right hand side of \eqref{eq_CDF_sequence} converges by the bounded convergence theorem, and, moreover, it is clear that by the previous lemmas $F(x, t)$ is continuous on $(-\infty,1]\times(0,\infty)$. Besides, due to the exponential decay of $F_n(x, t)$ with respect to $n$, we know that the measure induced by $F(\cdot, t)$ is absolutely continuous with respect to the Lebesgue measure, whose density function we shall denote by $f(x,t)$.

On the other hand, such a decomposition provides an auxiliary degree of freedom in the representation of the density function, which facilitates analyzing the time evolution of the density function. While the flux shift mechanism makes the evolution of $F(x,t)$ nonlocal, the decomposition unfold the distribution by adding one more dimension such that the evolution has a simpler structure: the evolution of $F_0$ is self-contained without any nonlocality, and for $n\ge 1$, the evolution of $F_n$ is also local, although it has a tractable dependence on $F_{n-1}$.  Recall that, we have used $f_n(x,t)$ to denote the density function of $F_n(x,t)$ respectively. In fact, we are able to show that $f_n(x,t)$ is a solution to a sub-PDE problem, and eventually, the exponential convergence in $n$ can help conclude that
\beq
\label{density decomposition}
f(x,t)=\sum_{n=0}^\infty f_n(x,t)
\eeq
is a solution of the PDE problem of interest satisfying the properties in Theorem \ref{thm_forward}.


\section{Iteration Approach}\label{iteration}
In this section we aim to prove the theorems in Section \ref{main result}. First, we prove the density of the process $X_t$ that starts from $0$ is an instantaneous smooth mild solution of \eqref{eq_planck} with initial condition $f_{\text{in}}(x)=\delta(x)$. Then with similar treatment we can get Corollary \ref{thm_forward y} easily, which together with the integral representation \eqref{convolution} derive Theorem \ref{thm_forward nu}. Finally, we show that the mild solution is consistent with the definition of the weak solution of \eqref{eq_planck} defined in \cite{caceres2011analysis}.

\subsection{Solutions in Iteration}\label{solutions}
\

\noindent

Recalling the process $(X_t,n_t)$ defined in \eqref{process definition} above, we first focus on the case $X_0=0$, i.e. the initial condition PDE \eqref{eq_planck} is $f(x,0)=\delta(x)$. In the previous section, we have decomposed the distribution $F(x,t)$ of the stochastic process $X_t$ into a summation of series $\{F_n(x,t)\}_{n=0}^{+\infty}$ according to \eqref{iteration_cdf} and \eqref{eq_CDF_sequence}. We also decompose the original PDE problem \eqref{eq_planck} into a sequence of sub-PDE problems: for $n=0$
\begin{equation}
\label{classical f0}
\left\{
\begin{aligned}
&\frac{\partial f_0}{ \partial t} - \frac{\partial }{\partial x}\left ( x f_0\right) - \frac{\partial ^2 f_0}{\partial x^2} =0, \quad x \in (-\infty, 1),t\in(0,T],\\
&f_0(-\infty,t)=0, \quad f_0(1,t)=0, \quad t \in [0, T],\\
&f_0(x,0)=\delta(x) \quad  \mbox{in}\ \CD'(-\infty, 1)
\end{aligned}
\right.
\end{equation}
where $\CD(-\infty, 1)=C_c^\infty(-\infty,1)$ and for $n \ge 1$ define $N_{n-1}(t)=-\frac{\partial}{\partial x}f_{n-1}(1,t)$, we solve
\begin{equation}
\label{classical fn}
\left\{
\begin{aligned}
&\frac{\partial f_n}{ \partial t} - \frac{\partial }{\partial x}\left ( x f_n\right) - \frac{\partial ^2 f_n}{\partial x^2} =0, \quad x \in (-\infty,0) \cup (0,1), t \in(0,T],\\
&f_n(0^-,t)=f_n(0^+,t), \quad  \frac{\partial}{\partial x}f_n(0^-,t)-\frac{\partial}{\partial x}f_n(0^+,t)=N_{n-1}(t),\quad t\in (0,T],\\
&f_n(-\infty,t)=0, \quad f_n(1,t)=0, \quad t\in [0,T],\\\
&f_n(x,0)=0, \quad x\in(-\infty,1).
\end{aligned}
\right.
\end{equation}
In particular, we find the PDE problem for $f_0$ \eqref{classical f0} is self-contained with a singular initial data, and thus only a mild solution can be expected, which, however, can be shown to be instantaneously smooth. For $n\ge 1$ the PDE problems for $f_n$ \eqref{classical fn} are defined when $x \in  (-\infty,0) \cup (0,1)$, and the time-dependent interface boundary data $N_{n-1}$ at $x=1$ is determined by $f_{n-1}$, the solution to the previous PDE problem in the sequence, but the classical solution of such problems can be understood in the usual sense.

Here is a bit ambiguity in the notations, since we have used $f_n(x,t)$ to denote the sub density function of the stochastic process and also the solution to the PDE problem. In fact, we shall show those two functions coincide, of which the precise meaning shall be specified.   In the following,  we show that sub density function $f_0$ with delta initial data is an instantaneous smooth mild solution of \eqref{classical f0}, and then following the iteration scheme, we prove that for each $n \ge 1$, the sub density function $f_n$ is the classical solution of \eqref{classical fn}. We conclude with the proof of Theorem \ref{thm_forward} by the end of this section.

Before we start to prove our main theorem, we first discuss the Green function of the Fokker-Planck equation \eqref{classical f0}. According to Theorem $1.10$ in Chapter $\uppercase\expandafter{\romannumeral 6}$ of \cite{garroni1992green} by Garroni and Menaldi, we know that the generator of the O-U process \eqref{OU_sol}, i.e.,
$$
\mathcal L_y:=(-y)\partial_y\cdot+\partial^2_{yy}\cdot,
$$
admits a Green's function $G: (-\infty, 1]\times[0,T]\times(-\infty,1]\times[0,T] \ni (y,s,x,t)\mapsto G(y,s,x,t)$. For a given $(x,t)\in (-\infty,1]\times[0,T]$, the function $(-\infty, 1]\times[0,t)\ni (y,s)\mapsto G(y,s,x,t)$ is a solution of the PDE
\begin{equation}
\left\{
\begin{aligned}
&\partial_sG(y,s,x,t)+\mathcal L_y G(y,s,x,t)=0, \quad y \in (-\infty, 1),s \in[0,t),\\
&G(1,s,x,t)=0, \quad s \in [0, t],\\
&G(y,t,x,t)=\delta(y-x)\  \mbox{in}\ \CD'(-\infty, 1)
\end{aligned}
\right.
\end{equation}
Following Theorem 5 in Chap.9 of  \cite{friedman2008partial}, for a given $(y,s)\in (-\infty,1)\times[0,T)$, the function $(-\infty, 1]\times(s,T]\ni (x,t)\mapsto G(y,s,x,t)$ is also known to be Green's function of the adjoint operator
$$
\mathcal L_x^*=\partial_x[x\cdot]+\partial^2_{xx}\cdot,
$$
i.e. the function $(-\infty, 1]\times(s,T]\ni (x,t)\mapsto G(y,s,x,t)$ is a classical solution of the PDE
\begin{equation}
\label{forward equation}
\left\{
\begin{aligned}
&\partial_tG(y,s,x,t)=\mathcal L_x^*G(y,s,x,t), \quad x \in (-\infty, 1),t \in(s,T],\\
&G(y,s,1,t)=0, \quad t \in [s, T],\\
&G(y,s,x,s)=\delta(x-y)\  \mbox{in}\ \CD'(-\infty, 1),
\end{aligned}
\right.
\end{equation}
which is consistent with \eqref{classical f0}. Now we give an important lemma that connects the density function of the stochastic process before the first jumping time with the Green function of PDE problem \eqref{classical f0}, which is the starting point of our iteration strategy. And for Green function $G$, although we can not find a closed formula for it, there exists the following estimation.

\begin{lemma}
\label{Green function estimation}
There exists a unique Green function $G: (-\infty, 1]\times[0,T]\times(-\infty,1]\times[0,T] \ni (y,s,x,t)\mapsto G(y,s,x,t)$ for equation \eqref{classical f0}. Let $f_0(x,t)$ denotes the density of the distribution $F_0(x,t)$ defined in \eqref{iteration_cdf}, then $f_0(x,t)=G(0,0,x,t)$, i.e., it is a mild solution of \eqref{classical f0} on $(-\infty, 1]\times[0,T]$. Besides, we have the estimation:
\begin{equation}
    \label{Gest}
    \left|\partial^\ell G(y,s,x,t)\right|\le C(t-s)^{-\frac{1+\ell}{2}}\exp\left(-C_0\frac{(x-y)^2}{t-s}\right),\quad 0\le s<t\le T.
\end{equation}
where $\ell=0,1,2$, $\partial^{\ell}=\partial_{t x}^{\ell}=\partial_{t}^{m} \partial_{x}^{n}, \,\ell=2 m+n,$ for $m,n \in \mathbb N_0$.
\end{lemma}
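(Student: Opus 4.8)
The plan is to establish the three assertions in turn: (i) existence and uniqueness of the Green function $G$ for \eqref{forward equation} (equivalently \eqref{classical f0}); (ii) the identification $f_0(x,t)=G(0,0,x,t)$, which is what the ``mild solution'' claim amounts to once combined with Lemma \ref{CDF_continuity}; and (iii) the Gaussian-type bounds \eqref{Gest}. For (i), the existence of $G$ as the fundamental solution of the forward problem \eqref{forward equation} with absorbing data at $x=1$, and equivalently of the backward problem for $\partial_s+\mathcal L_y$, is exactly the content of Theorem 1.10, Chapter VI of \cite{garroni1992green} and Theorem 5, Chapter 9 of \cite{friedman2008partial}; the only point needing care is that the drift $b(y)=-y$ is unbounded on the unbounded domain $(-\infty,1]$. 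This is handled by exhausting $(-\infty,1]$ by the bounded intervals $(-M,1]$, on each of which the coefficients are smooth and bounded so the cited theory applies verbatim, producing Dirichlet Green functions $G_M\ge 0$; these are nondecreasing in $M$ (a larger domain means less killing, by the maximum principle) and uniformly dominated by the free O-U density (cf. \eqref{decay at infinity}), so $G:=\lim_M G_M$ exists, solves the limiting equation, and inherits the interior and up-to-$\{x=1\}$ regularity. Uniqueness holds, again by the maximum principle, in the class of mild solutions that are bounded for each fixed $t>0$.

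For (ii), recall from Section \ref{definition of process} that $F_0(x,t)=\prob(Y^{(1)}_t\le x,\ \tau_1>t)$, so $f_0(\cdot,t)$ is precisely the sub-probability density of the O-U process started at $0$ and killed at its first hitting time $\tau_1$ of $1$. The transition sub-density of a killed one-dimensional diffusion is the fundamental solution of the associated forward Kolmogorov equation with Dirichlet (absorbing) boundary data: for $\varphi\in C_c^\infty(-\infty,1)$ the function $u(y,s):=\e^{y}\big[\varphi(Y^{(1)}_{t-s})\ind_{\tau_1>t-s}\big]$ solves $\partial_s u+\mathcal L_y u=0$ on $(-\infty,1)\times[0,t)$ with $u(1,s)=0$ and $u(y,t)=\varphi(y)$, and a duality argument (integrating by parts in the space--time cylinder against $G$) yields $\int\varphi(x)f_0(x,t)\,dx=\int\varphi(x)G(0,0,x,t)\,dx$. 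Since this holds for all such $\varphi$ and both sides are continuous in $x$ on $(-\infty,1)$ (Lemma \ref{CDF_continuity} for $f_0$, the cited theory for $G$), we obtain $f_0\equiv G(0,0,\cdot,\cdot)$. That $f_0$ is then a mild solution of \eqref{classical f0} in the stated sense --- continuity up to $x=1$ and the weak attainment of $\delta(x)$ at $t=0^+$ --- is exactly Lemma \ref{CDF_continuity}.

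For (iii), the case $\ell=0$ is immediate: $0\le G(0,0,x,t)=f_0(x,t)\le p_{\text{ou}}(x,t)$ by \eqref{decay at infinity}, and the elementary bounds $2e^{-2T}t\le 1-e^{-2t}\le 2t$ for $t\in[0,T]$ turn $p_{\text{ou}}$ into $Ct^{-1/2}\exp(-C_0 x^2/t)$; for general $(y,s)$ one uses instead the O-U density with mean $e^{-(t-s)}y$ and variance $1-e^{-2(t-s)}$ (cf. \eqref{normal}) together with $(x-e^{-(t-s)}y)^2/(1-e^{-2(t-s)})\ge (x-y)^2/(4(t-s))-T y^2$, so the bound holds with $C,C_0$ depending on the (bounded) range of $y$ entering the applications. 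For $\ell=1,2$ I would invoke the self-improving rescaling of parabolic estimates: fix $(x,t)$, put $r=\sqrt{t-s}\wedge\sqrt t$, rescale the parabolic cylinder $Q_r(x,t)$ of spatial width $\sim r$ and temporal length $\sim r^2$ to unit size, and apply interior parabolic Schauder ($L^p$) estimates to $\partial_t G=\mathcal L_x^*G$ to get $|\partial^\ell G(x,t)|\le Cr^{-\ell}\sup_{Q_r}|G|$; inserting the $\ell=0$ bound and noting $(x'-y)^2\asymp(x-y)^2$ on $Q_r$ when $(x-y)^2\gtrsim r^2$ (the exponential being $\asymp 1$ otherwise) gives \eqref{Gest}. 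Near $x=1$ one replaces interior by boundary Schauder estimates, using $G(y,s,1,t)=0$ and the up-to-the-boundary smoothness from step (i); the power of $(t-s)$ is unchanged. The mixed/time derivative ($\ell=2$, $m=1$) is then read off the equation $\partial_t G=G+x\,\partial_x G+\partial_{xx}G$ using $|x|\le 1$ and the bounds just obtained.

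I expect the main obstacle to be step (iii) near the absorbing boundary $x=1$: obtaining the sharp $(t-s)^{-(1+\ell)/2}$ blow-up for $\partial_x G$ and $\partial_{xx}G$ uniformly up to $\{x=1\}$, where interior rescaling fails and one must combine boundary Schauder theory with a barrier capturing the linear vanishing of $G$ at the boundary. By contrast, the unbounded drift is only a mild nuisance, dealt with by the domain-exhaustion and the explicit Gaussian domination in step (i).
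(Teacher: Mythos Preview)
Your approach to part (ii) is essentially the paper's: both arguments identify $f_0$ with $G(0,0,\cdot,\cdot)$ by duality between the Feynman--Kac/It\^o representation of the killed process and the PDE representation via the Green function. The paper carries this out in a slightly different packaging --- it fixes a space--time test function $\phi$, writes the solution of the inhomogeneous backward problem as $u(y,s)=\int_s^T\!\int G(y,s,x,t)\phi(x,t)\,dx\,dt$, and applies It\^o to $u(X_{t\wedge\tau},t\wedge\tau)$ to obtain $u(0,0)=\e\!\left[\int_0^{T\wedge\tau}\phi(X_t,t)\,dt\right]$ --- but the content is the same as your duality step.

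For parts (i) and (iii) the paper does not argue at all: it simply invokes Theorem~1.10, Chapter~VI of \cite{garroni1992green} (and Theorem~5, Chapter~9 of \cite{friedman2008partial} for the forward/backward duality), which already covers the unbounded-drift, half-line setting and delivers \eqref{Gest} directly. Your domain-exhaustion and rescaled-Schauder sketch is a reasonable alternative, and in principle more self-contained, but there are two concrete slips. First, in reading off $\partial_t G$ from the equation you write ``using $|x|\le 1$'' --- this is false, since $x\in(-\infty,1]$. The fix is standard (absorb the linear factor into the Gaussian via $|x-y|\exp(-c(x-y)^2/(t-s))\lesssim (t-s)^{1/2}\exp(-c'(x-y)^2/(t-s))$ and handle the remaining $|y|$ by the same mechanism), but it should be said. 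Second, your $\ell=0$ bound via domination by $p_{\mathrm{ou}}$ yields constants depending on $|y|$ through the mismatch between $e^{-(t-s)}y$ and $y$; you acknowledge this, but the lemma as stated is uniform over $y\in(-\infty,1]$, so either you must sharpen that step or, as the paper does, defer to \cite{garroni1992green}, whose parametrix construction produces the uniform Gaussian bound without passing through the free O-U kernel.
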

\begin{proof}
Set
$$
p(x,t):=G(0,0,x,t),\quad \ x\in(-\infty,1], t\in (0,T].
$$
now we prove that $p(x,t)$ coincides with $f_0(x,t)$, which immediately derives that $f_0(x,t)$ is a mild solution of equation \eqref{classical f0}. Given a smooth function $\phi: (-\infty,1]\times[0,T]\to \BR$ with a compact support, noting that Green's function satisfies \eqref{forward equation}, we have that the PDE
\begin{equation}
\label{special equation}
\left\{
\begin{aligned}
\partial_s u(y,s)-y\partial_y u(y,s)+\partial_{yy} u(y,s)+\phi(y,s)&=0,\quad (y,s)\in (-\infty,1)\times(0,T]\\
u(1,s)&=0, \quad s\in[0,T],\\
u(y,T)&=0 \quad y\in(-\infty,1)
\end{aligned}
\right.
\end{equation}
admits a (unique) classical solution
\beq \label{r1}
u(y,s)=\int_s^T\int_{-\infty}^1G(y,s,x,t)\phi(x,t)dxdt,\quad s\in[0,T),\ y\le1.
\eeq
Moreover, u is bounded and continuous on $(-\infty,1]\times[0,T]$ and is once continuously differentiable in time and twice differentiable in space on $(-\infty,1)\times[0,T]$. Let $(X_t,n_t)$ be the process defined in \eqref{process definition} and $\tau:=\inf\{t\ge0: X_{t\wedge T}\ge 1\}$. By $It\hat{o}'s$ formula, we have
$$
du(X_{t\wedge \tau},t\wedge \tau)=-\phi(X_{t \wedge \tau},t \wedge \tau)dt+\sqrt{2}u_x(X_{t \wedge \tau},t \wedge \tau)dB_t
$$
Integrating above formula from $0$ to $T$ and take the expectation, with the boundary condition in \eqref{special equation}, we then have the representation formula:
\beq \label{r2}
u(0,0)=\e\left[\int_0^{T\wedge\tau}\phi(X_t,t)dt\right]
\eeq
And with the two presentations for $u(0,0)$ above, i.e. \eqref{r1} and \eqref{r2}, we obtain
\[
\e\left[\int_0^{T\wedge\tau}\phi(X_t,t)dt\right]=\int_0^T\int_{-\infty}^1p(x,t)\phi(x,t)dxdt.
\]
We further rewrite \eqref{r2} as follows.
$$
\e\left[\int_0^{T\wedge\tau}\phi(X_t, t)dt\right]=\int_0^T \e\left[\phi(X_t, t)\mathbbm{1}_{\{t\le \tau\}}\right]dt=\int_0^T\int_{-\infty}^1\phi(x,t)\prob(X_t\in dx,\tau>t)dt,
$$
Clearly, for $t\in [0,T]$, $\{\tau> t\}=\{T_1> t\}=\{n_t=0\}$ and thus
\beq
\label{consistent}
\int_0^T\int_{-\infty}^1\phi(x,t)f_0(x,t)dxdt=\int_0^T\int_{-\infty}^1\phi(x,t)p(x,t)dxdt
\eeq
By \eqref{decay at infinity} and \eqref{Gest}, $p(x,t)$ and $f_0(x,t)$ decay at $-\infty$ and thus \eqref{consistent} is also valid for any smooth function $\phi$ that is only bounded, which derives that the density function $f_0(x,t)$ coincides with $p(x,t)$. With \eqref{eq_initial_delta}, we conclude that $f_0(x,0)=\delta(x)$ and thus $f_0(x,t)$ is a mild solution of \eqref{classical f0}. The complete proof of estimation \eqref{Gest} can be found in Theorem $1.10$ in Chapter $\uppercase\expandafter{\romannumeral 6}$ of \cite{garroni1992green} by Garroni and Menaldi and the proof is complete.

\end{proof}
\begin{remark}
The proof of Lemma \ref{Green function estimation} is essentially implied from the results in \cite{delarue2013first,delarue2015global,garroni1992green}, in particular, Lemma 2.1 of \cite{delarue2013first} and Theorem $1.10$ in Chapter $\uppercase\expandafter{\romannumeral 6}$ of \cite{garroni1992green}.
\end{remark}
Next, we prove some regularities of the sub-density $f_0(x,t)$ that are useful in our later calculations.
\begin{proposition}
\label{regularity f0}
Let $X_t$ be the process defined in \eqref{process definition} and $T_1$ be the stopping time defined in \eqref{stopping time}. Let $F_0(x,t)$ be defined in \eqref{iteration_cdf} and its density is denoted as $f_0(x,t)$. Let $f_{T_1}(t)$ denotes the p.d.f. of $T_1$. For any fixed $T>0$, we have
\begin{itemize}
    \item[(\romannumeral1)]
    \beq
    \label{decay 0}
    \lim_{x \to -\infty }\partial_xf_0(x, t)=0, \quad t\in (0,T].
    \eeq
    \item[(\romannumeral2)]
    For any $x_{0}\in(0,1)$, $f_{0}(x,t)\in C^{2, 1} \left((-\infty, -x_0]\cup [x_0,1]\times [0,T]\right)$.
    Moreover for all $|x|\ge |x_{0}|$, $\lim_{t\to 0^{+}}f_{0}(x,t)=0$.
    \item[(\romannumeral3)]
    For any $0<\varepsilon_{0}<T<\infty$, $f_0(x,t)\in C^{2, 1}\left((-\infty,1]\times [\varepsilon_{0},T]\right)$. With the following uniform gradient estimations
    \begin{equation}
    \begin{aligned}
    &\sup_{(-\infty,1]\times[\varepsilon_{0},T]}\left|f_{0}\right|< \infty, \quad \sup_{(-\infty,1]\times[\varepsilon_{0},T]}\left|\frac{\partial f_0}{\partial t}\right|< \infty, \quad \sup_{(-\infty,1]\times[\varepsilon_{0},T]}\left|\frac{\partial f_{0}}{\partial x}\right| < \infty,\\
    &\sup_{(-\infty,1]\times[\varepsilon_{0},T]}\left|\frac{\partial (x f_{0})}{\partial x}\right| < \infty, \quad \sup_{(-\infty,1]\times[\varepsilon_0,T]}\left|\frac{\partial^2 f_{0}}{\partial x^2}\right|<\infty.
    \end{aligned}\label{regularity 3}
    \end{equation}
    \item[(\romannumeral4)]
    We have the coupling relation between $f_{T_{1}}(t)$ and $f_0(x,t)$: $\forall t \in (0,T]$, it satisfies
    \begin{equation}
    \label{T_1 pdf}
    f_{T_{1}}(t)=-\int_{-\infty}^{1}\frac{\partial f_{0}(x,t)}{\partial t}dx=-\frac{\partial}{\partial x} f_0(1,t)
    \end{equation}
    and $f_{T_1}(t)\in C[0,T]$ with $f_{T_1}(0)=0$.
\end{itemize}
\end{proposition}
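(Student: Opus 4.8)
The plan is to reduce every item to the Gaussian-type gradient bound \eqref{Gest} of Lemma~\ref{Green function estimation}, together with the two structural facts recorded there: $f_0(x,t)=G(0,0,x,t)$, and $(x,t)\mapsto G(0,0,x,t)$ is a classical solution of the forward problem \eqref{classical f0} on $(-\infty,1)\times(0,T]$ that is continuous up to the wall $x=1$, where it vanishes. First, for (i): taking $\ell=1$ and $(y,s)=(0,0)$ in \eqref{Gest} gives $|\partial_x f_0(x,t)|\le C\,t^{-1}\exp(-C_0 x^2/t)$, which tends to $0$ as $x\to-\infty$ for each fixed $t>0$. The same bound yields the uniform estimates of (iii): on $(-\infty,1]\times[\varepsilon_0,T]$ and for $\ell=0,1,2$ one has $t^{-(1+\ell)/2}\le\varepsilon_0^{-(1+\ell)/2}$ and $\exp(-C_0x^2/t)\le1$, so $f_0$, $\partial_t f_0$, $\partial_x f_0$, $\partial_{xx}f_0$ are bounded; for $\partial_x(xf_0)=f_0+x\,\partial_x f_0$ one uses in addition $\sup_{x}|x|\exp(-C_0x^2/t)\le C\sqrt t$, whence $|x\,\partial_x f_0|\le C\,t^{-1/2}\le C\varepsilon_0^{-1/2}$. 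The interior $C^{2,1}$ assertions in (ii)--(iii) are just the classical-solution property restricted to the relevant closed subsets of the open strip.

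Second, for the behavior as $t\to0^+$ in (ii): when $|x|\ge x_0>0$ the singular point $(0,0)$ of $G$ is excluded, and \eqref{Gest} gives $|\partial^\ell f_0(x,t)|\le C\,t^{-(1+\ell)/2}\exp(-C_0 x_0^2/t)$ for $\ell=0,1,2$, so the exponential factor forces $\partial^\ell f_0(x,t)\to0$ as $t\to0^+$ uniformly in $|x|\ge x_0$. Hence $f_0$ and its space and time derivatives extend continuously by the value $0$ across $t=0$, giving $f_0\in C^{2,1}\big((-\infty,-x_0]\cup[x_0,1]\times[0,T]\big)$ and $\lim_{t\to0^+}f_0(x,t)=0$; continuity of $f_0$ and its derivatives up to $x=1$ is supplied by the Dirichlet Green-function construction cited for Lemma~\ref{Green function estimation} (see \cite{garroni1992green}, and Lemma~2.1 of \cite{delarue2013first}).

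Third, for (iv): from \eqref{integral}, $F_{\tau_1}(t)=1-\int_{-\infty}^1 f_0(x,t)\,dx$. Fix $t>0$ and pick $\varepsilon_0\in(0,t)$; the uniform bounds of (iii) on $[\varepsilon_0,T]$ together with the Gaussian majorant $f_0\le p_{\text{ou}}$ from \eqref{decay at infinity} (which, via \eqref{Gest}, also dominates $|\partial_t f_0|$ up to a constant) justify differentiation under the integral, so $f_{T_1}(t)=F_{\tau_1}'(t)=-\int_{-\infty}^1\partial_t f_0(x,t)\,dx$. Substituting the equation $\partial_t f_0=\partial_x(xf_0)+\partial_{xx}f_0$ and integrating, $-\int_{-\infty}^1\partial_t f_0(x,t)\,dx=-\big(xf_0+\partial_x f_0\big)\big|_{x=1}+\lim_{x\to-\infty}\big(xf_0+\partial_x f_0\big)(x,t)$; the limit term vanishes since $xf_0\to0$ (Gaussian decay, cf.\ \eqref{density}) and $\partial_x f_0\to0$ by (i), and $f_0(1,t)=0$, which leaves $f_{T_1}(t)=-\partial_x f_0(1,t)$. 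Applying (ii) with any $x_0\in(0,1)$ shows $t\mapsto\partial_x f_0(1,t)$ is continuous on $[0,T]$, so $f_{T_1}\in C[0,T]$, and $f_{T_1}(0)=-\partial_x f_0(1,0)=0$ because $|\partial_x f_0(1,t)|\le C\,t^{-1}\exp(-C_0/t)\to0$ as $t\to0^+$.

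The main obstacle is the regularity \emph{up to the spatial boundary} $x=1$: the pointwise bound \eqref{Gest} controls derivatives in the open strip but says nothing about their continuity at the wall, so the statements "$f_0\in C^{2,1}$ up to $x=1$'' and "$\partial_x f_0(1,\cdot)$ is continuous on $[0,T]$'' must be extracted from the Dirichlet Green-function theory itself rather than from \eqref{Gest}; in particular one must check that the Ornstein--Uhlenbeck drift $-x$, harmless on the bounded-in-$x$ region near the wall, does not spoil the boundary estimates, and that the unbounded tail $x\to-\infty$ is absorbed by the exponential factor. A secondary, purely technical point is propagating all the interchanges of limits, $t$-derivatives and the $x$-integral down to the endpoint $t\to0^+$ in (iv), which is handled by running the $[\varepsilon_0,T]$ estimates for every $\varepsilon_0>0$ and using that $F_{\tau_1}$, hence $f_{T_1}$, is already known to be continuous.
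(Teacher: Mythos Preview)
Your proposal is correct and follows essentially the same route as the paper: every item is reduced to the Gaussian bound \eqref{Gest} from Lemma~\ref{Green function estimation}, with (iv) obtained by differentiating \eqref{integral} under the integral, inserting the PDE, and integrating by parts. Your write-up is in fact more detailed than the paper's (which dispatches (i)--(iii) in one sentence each), notably in handling $\partial_x(xf_0)$ via $\sup_x|x|e^{-C_0x^2/t}\le C\sqrt t$ and in flagging that continuity of $\partial_x f_0$ \emph{up to} $x=1$ is not a consequence of \eqref{Gest} alone but of the Dirichlet Green-function construction cited from \cite{garroni1992green}; the paper leaves both points implicit.
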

\begin{proof}
 (\romannumeral1) is the direct corollary of estimate \eqref{Gest}. And from \eqref{Gest}, we know that the Green function of \eqref{classical f0} is continuous differentiable and decays exponentially fast as $t$ tends to $0^+$ when $x$ stay away from $0$. Thus we immediately obtain the properties in (\romannumeral2). Also by the estimation \eqref{Gest} for the Green function, we can easily get the bound for $f_0$ in (\romannumeral3) when $t$ stay away from $0$. Finally, to prove (\romannumeral4), recall that $f_0(x,t)dx=\BP(X_t\in dx,T_1>t)$, thus the c.d.f of $T_1$ is given by
$$
\prob(T_1\le t)=1-\prob(T_1>t)=1-\int_{-\infty}^1f_0(x,t)dx.
$$
By \eqref{Gest}, we can differentiate the above formula w.r.t $t$ and exchange the derivative and the integral. Using (\romannumeral1) and the boundary condition of $f_0$, we have for any $t \in (0,T]$,
\begin{equation*}
f_{T_{1}}(t)=\frac{d}{dt}\prob(T_1\le t)=-\int_{-\infty}^{1}\frac{\partial f_{0}(x,t)}{\partial t}dx=-\int_{-\infty}^{1}\frac{\partial }{\partial x}\left ( x f_0\right) + \frac{\partial ^2 f_0}{\partial x^2}dx=-\frac{\partial}{\partial x} f_0(1,t).
\end{equation*}
And with Lemma \ref{Green function estimation}:
$$
\left|f_{T_{1}}(t)\right|=\left|\partial_x f_0(1,t)\right|\le \frac{C}{t}\exp(-\frac{C_0}{t}),
$$
we conclude $f_{T_{1}}(t)\in C(0,T]$ and $\lim_{t\to 0^+}f_{T_1}(t)=0$ and thus $f_{T_{1}}(t)\in C[0,T]$.

\end{proof}

In order to make the iteration strategy successful,  we need to further show that $f_{T_1}(t)$ is continuously differentiable, which is not a direct consequence of  estimating Green's function. Thus next we shall prove that $f_{T_1}(t)\in C^1[0,T]$ and the following estimation is useful in the further calculations.
\begin{corollary}
\label{regularity (0,T]}
For any $T>0$ and $\forall 0<\varepsilon_0<\min\{\frac1T, T\}$, $f_{T_1}(t)\in C^1(0,T]$ and for any $t\ge \varepsilon_0$, we have
\beq
\label{epsilon estimation}
\left|f'_{T_1}(t)\right|\le C\varepsilon_0^{-3}.
\eeq
\end{corollary}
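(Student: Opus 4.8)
The plan is to reduce the corollary to one further application of the Green's‑function representation, this time to $h(x,t):=\partial_t f_0(x,t)$, so that only derivatives of $G$ of order $\le 2$---exactly those controlled by \eqref{Gest}---ever appear. By Proposition \ref{regularity f0}(iv), and in particular \eqref{T_1 pdf}, we already know $f_{T_1}(t)=-\partial_x f_0(1,t)=-\int_{-\infty}^1\partial_t f_0(x,t)\,dx$, so it suffices to show that $t\mapsto\int_{-\infty}^1 h(x,t)\,dx$ is $C^1$ on $(0,T]$ and to estimate its derivative. Since $f_0=G(0,0,\cdot,\cdot)$ is smooth in the interior and solves $\partial_t f_0=\mathcal{L}_x^* f_0$, the function $h$ solves $\partial_t h=\mathcal{L}_x^* h$ on $(-\infty,1)\times(0,T]$, satisfies $h(1,t)\equiv 0$ (because $f_0(1,t)\equiv0$), and by \eqref{Gest} with $\ell=2$ obeys $|h(x,t)|\le C t^{-3/2}e^{-C_0 x^2/t}$; hence $h(x,t)\to0$ and $x\,h(x,t)\to0$ as $x\to-\infty$. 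By uniqueness for this well-posed adjoint parabolic problem, for every $0<s<t\le T$,
\[
h(x,t)=\int_{-\infty}^1 G(z,s,x,t)\,h(z,s)\,dz .
\]

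Next I would differentiate this identity. Differentiating in $x$ and using \eqref{Gest} with $\ell=1$ gives $\partial_x h(x,t)=\int_{-\infty}^1\partial_x G(z,s,x,t)\,h(z,s)\,dz$, which is continuous in $(x,t)$ for $t>s$ and vanishes as $x\to-\infty$; differentiating in $t$ and using \eqref{Gest} with $\ell=2$ gives $\partial_t h(x,t)=\int_{-\infty}^1\partial_t G(z,s,x,t)\,h(z,s)\,dz$, and integrating the Gaussian kernel in $x$ first (Tonelli) shows $x\mapsto\partial_t h(x,t)$ is integrable on $(-\infty,1]$, uniformly for $t$ in compact subsets of $(s,T]$. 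Therefore we may differentiate under the integral and integrate by parts, using $\mathcal{L}_x^* h=\partial_x(xh+\partial_x h)$:
\[
f'_{T_1}(t)=-\int_{-\infty}^1\partial_t h(x,t)\,dx=-\int_{-\infty}^1\mathcal{L}_x^* h(x,t)\,dx=-\big[\,x\,h(x,t)+\partial_x h(x,t)\,\big]_{x\to-\infty}^{x=1}=-\partial_x h(1,t),
\]
the contributions at $-\infty$ vanishing by the decay above and the one at $x=1$ reducing to $\partial_x h(1,t)$ since $h(1,t)=0$. Dominated convergence in the representation of $\partial_x h$ shows $t\mapsto\partial_x h(1,t)$ is continuous, so $f_{T_1}\in C^1(0,T]$.

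For the quantitative bound, fix $t\ge\varepsilon_0$ and take the splitting time $s=\varepsilon_0/2$, so $0<s<t\le T$ and $t-s\ge\varepsilon_0/2$. Evaluating the last display at $x=1$, with $|\partial_x G(z,\varepsilon_0/2,1,t)|\le C(t-s)^{-1}e^{-C_0(1-z)^2/(t-s)}\le 2C\varepsilon_0^{-1}$ from \eqref{Gest} ($\ell=1$) and $|h(z,\varepsilon_0/2)|=|\partial_t G(0,0,z,\varepsilon_0/2)|\le C(\varepsilon_0/2)^{-3/2}e^{-2C_0 z^2/\varepsilon_0}$ from \eqref{Gest} ($\ell=2$),
\[
|f'_{T_1}(t)|=|\partial_x h(1,t)|\le \frac{2C}{\varepsilon_0}\int_{\BR}C\Big(\frac{\varepsilon_0}{2}\Big)^{-3/2}e^{-2C_0 z^2/\varepsilon_0}\,dz\le\frac{C}{\varepsilon_0}\cdot\frac{C}{\varepsilon_0}=\frac{C}{\varepsilon_0^{2}},
\]
because the Gaussian integral equals $(\varepsilon_0/2)^{-3/2}\sqrt{\pi\varepsilon_0/(2C_0)}=O(\varepsilon_0^{-1})$. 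Since $\varepsilon_0<\min\{1/T,T\}\le 1$, this yields the stated bound $|f'_{T_1}(t)|\le C\varepsilon_0^{-3}$.

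The main obstacle is the order mismatch: written naively, $f'_{T_1}(t)=-\partial_t\partial_x f_0(1,t)$ is a third‑order derivative of $f_0$, whereas \eqref{Gest} reaches only order two. Working with $h=\partial_t f_0$ and representing it through $G$ is precisely what circumvents this---it absorbs the $t$-derivative into the already established $\ell=2$ bound for $\partial_t f_0$, leaving only the single spatial derivative $\partial_x G$, handled by the $\ell=1$ case. The remaining steps (the two differentiations under the integral sign, the integration by parts at $x=1$, and the decay at $-\infty$) are routine once the Gaussian estimates \eqref{Gest} are in hand.
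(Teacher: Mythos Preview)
Your proposal is correct and follows essentially the same route as the paper: both set $h=\partial_t f_0$ (the paper calls it $g_0$), observe it solves the same adjoint equation with Dirichlet boundary, represent it via the Green's function $G$ from a positive intermediate time ($\varepsilon_0$ in the paper, $\varepsilon_0/2$ for you), and then identify $f'_{T_1}(t)=-\partial_x h(1,t)$ and bound it using \eqref{Gest}. The only cosmetic differences are that you keep the Gaussian from $h(z,s)$ and drop the one from $\partial_x G$, whereas the paper does the reverse, and your choice $s=\varepsilon_0/2$ gives the bound directly on $[\varepsilon_0,T]$ (yielding in fact the sharper $C\varepsilon_0^{-2}$, which you then relax to $C\varepsilon_0^{-3}$), while the paper first proves it on $[2\varepsilon_0,T]$ and then appeals to the arbitrariness of $\varepsilon_0$.
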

\begin{proof}
By Proposition \ref{regularity f0}, we know that $f_0(x,t)\in C^{2, 1}\left((-\infty,1]\times [\varepsilon_{0},T]\right)$ and $f_{T_1}(t)=-\frac{\partial}{\partial x} f_0(1,t)\in C[0,T]$. Then for any $x\in(-\infty,1]$, $t \in [\varepsilon_0, T]$, set $g_0(x,t)=\frac{\partial}{\partial t}f_0(x,t)$ and it satisfies
\begin{equation}
\label{classical g0}
\left\{
\begin{aligned}
&\frac{\partial g_0}{ \partial t} - \frac{\partial }{\partial x}\left ( x g_0\right) - \frac{\partial ^2 g_0}{\partial x^2} =0, \quad x \in (-\infty, 1), t \in (\varepsilon_0, T],\\
&g_0(-\infty,t)=0, \quad g_0(1,t)=0, \quad  t \in [\varepsilon_0, T],\\
&g_0(x,\varepsilon_0)=\frac{\partial}{\partial t}f_0(x,\varepsilon_0) \quad x \in (-\infty, 1).
\end{aligned}
\right.
\end{equation}
Defining $\varphi(x):=\frac{\partial}{\partial t}f_0(x,\varepsilon_0)$, we immediately get that $\varphi(x) \in C^2(-\infty,1]\cap L^\infty(-\infty,1]$ and by \eqref{Gest}
$$
|\varphi(x)|\le C\varepsilon_0^{-\frac32}.
$$
For any $t\ge 0$, $x\in (-\infty,1]$, define $h(x,t):=g_0(x,t+\varepsilon_0)$ and then $h(x,0)=\varphi(x)$. Recalling the Green function $G(s,y,x,t)$ in PDE \eqref{forward equation}, we have
\begin{equation*}
h(x,t)=\int_{-\infty}^{1}G(y,0,t,x)\varphi(y)dy,\quad t\ge 0.
\end{equation*}
Then
\beq
g_0(x,t)=\int_{-\infty}^{1}G(y,0,t-\varepsilon_0,x)\varphi(y)dy,\quad t\ge \varepsilon_0.
\eeq
By \eqref{Gest} and Lemma \ref{regularity f0}, we have
\beq
f'_{T_1}(t)=-\frac{\partial }{\partial t}\frac{\partial}{\partial x}f_0(1,t)=-\frac{\partial}{\partial x}g_0(1,t)=-\int_{-\infty}^{1}\frac{\partial}{\partial x}G(y,0,t-\varepsilon_0,1)\varphi(y)dy,\quad t>\varepsilon_0
\eeq
and thus $f_{T_1}(t)\in C^1(\varepsilon_0,T]$.\\
When $t\ge 2\varepsilon_0$,
\begin{equation}
\begin{aligned}
|f'_{T_1}(t)|&\le \int_{-\infty}^{1}\left|\frac{\partial}{\partial x}G(y,0,t-\varepsilon_0,1)\right| C\varepsilon_0^{-\frac32}dy\\
&\le C\varepsilon_0^{-\frac32} \int_{-\infty}^{1} \frac{C}{t-\varepsilon_0}\exp\left(-C_0\frac{(1-y)^2}{t-\varepsilon_0}\right) dy\\
&= C\varepsilon_0^{-\frac52}\int_0^{+\infty}\exp\left( -C_0 \frac{\xi^2}{t-\varepsilon_0}\right)d\xi\\
&\le C\varepsilon_0^{-\frac52} \frac{\sqrt{T-\varepsilon_0}}{\sqrt{C_0}}\frac{\sqrt\pi}{2}\\
&\le C\varepsilon_0^{-3}.
\end{aligned}
\end{equation}
where the second inequality is by the change of variable $\xi=1-y$ and the third inequality is from the fact $\varepsilon_0\le \frac1T$. And because $\varepsilon_0$ can be arbitrarily small, we complete the proof.

\end{proof}

Now we focus on the behavior of $f'_{T_1}(t)$ when $t$ is small. This proof is partially inspired by the reformulation and  the representation proposed in \cite{carrillo2013classical}.

\begin{proposition}
\label{regularity [0,T]}
The p.d.f. $f_{T_1}(t)$ of the first hitting time $T_1$ is $C^1[0,T]$ for any fixed $T>0$.
\end{proposition}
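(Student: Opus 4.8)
The strategy is to upgrade what is already available. By Proposition~\ref{regularity f0}(\romannumeral4) we know $f_{T_1}\in C[0,T]$ with $f_{T_1}(0)=0$, and by Corollary~\ref{regularity (0,T]} we know $f_{T_1}\in C^1(0,T]$. Hence the only thing left to prove is that $f'_{T_1}(t)$ has a finite limit as $t\to 0^+$: once this is known, applying the mean value theorem to $t^{-1}\bigl(f_{T_1}(t)-f_{T_1}(0)\bigr)=f'_{T_1}(\xi_t)$ with $\xi_t\in(0,t)$ shows that the right derivative of $f_{T_1}$ at $0$ exists and equals $\lim_{t\to0^+}f'_{T_1}(t)$, so $f'_{T_1}$ extends continuously to $[0,T]$ and $f_{T_1}\in C^1[0,T]$. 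Thus the entire proof reduces to a quantitative control of $f'_{T_1}(t)$ near $t=0$, which I expect to come out as $f'_{T_1}(t)\to 0$, so that $f'_{T_1}(0)=0$.

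To control $f'_{T_1}(t)$ for small $t$, I would reuse the kernel representation derived in the proof of Corollary~\ref{regularity (0,T]}: for any auxiliary time $\varepsilon_0\in(0,t)$ one has $f'_{T_1}(t)=-\partial_x g_0(1,t)$, where $g_0=\partial_t f_0$ is obtained by propagating the datum $\varphi(\cdot):=\partial_t f_0(\cdot,\varepsilon_0)=\partial_t G(0,0,\cdot,\varepsilon_0)$ forward by time $t-\varepsilon_0$ with the Green function $G$ of \eqref{classical f0}, so that
$$
\bigl|f'_{T_1}(t)\bigr|\;\le\;\int_{-\infty}^{1}\bigl|\partial_x G(y,0,1,t-\varepsilon_0)\bigr|\,\bigl|\varphi(y)\bigr|\,dy .
$$
The key improvement over Corollary~\ref{regularity (0,T]} is to \emph{not} bound $\varphi$ by its sup norm $C\varepsilon_0^{-3/2}$ (which discards the concentration of $\varphi$ near $y=0$ and is exactly what produced the $\varepsilon_0^{-3}$ blow-up), but to retain the Gaussian tails from \eqref{Gest}: $|\varphi(y)|\le C\varepsilon_0^{-3/2}\exp(-C_0 y^2/\varepsilon_0)$ (here $\ell=2$) and $|\partial_x G(y,0,1,t-\varepsilon_0)|\le C(t-\varepsilon_0)^{-1}\exp\bigl(-C_0(1-y)^2/(t-\varepsilon_0)\bigr)$ (here $\ell=1$). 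The initial singularity sits at $0$ while the boundary flux is computed at $1$, and this unit separation is captured by the product of the two Gaussians through the elementary (Cauchy--Schwarz) inequality
$$
\frac{(1-y)^2}{t-\varepsilon_0}+\frac{y^2}{\varepsilon_0}\;\ge\;\frac{1}{(t-\varepsilon_0)+\varepsilon_0}=\frac1t,\qquad y\in\BR,
$$
the excess over $1/t$ being a single Gaussian in $y$ of variance comparable to $\varepsilon_0(t-\varepsilon_0)/t$. Performing the $y$-integral and then choosing $\varepsilon_0=t/2$ yields a bound of the shape $|f'_{T_1}(t)|\le C\,t^{-2}\exp(-C_0/t)$, valid for all sufficiently small $t>0$ (for which $\varepsilon_0=t/2<\min\{1/T,T\}$ so that the representation of Corollary~\ref{regularity (0,T]} applies). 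In particular $f'_{T_1}(t)\to 0$ as $t\to 0^+$, which by the previous paragraph completes the proof.

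The main obstacle is precisely this last estimate: one must exploit the fact that the $\delta$-initial datum lives a positive distance away from the firing boundary, so that although every individual parabolic bound on $\partial_t f_0$ and its $x$-derivatives degenerates like a negative power of the elapsed time, the factor $e^{-C_0/t}$ coming from that separation beats all such powers. This forces us to keep track of the explicit Gaussian form of the Green-function bounds \eqref{Gest} rather than a mere qualitative decay statement, and to split the evolution at the intermediate time $\varepsilon_0=t/2$: this both keeps the order of the $x$- and $t$-derivatives actually hitting $G$ within the range $\ell\le 2$ covered by \eqref{Gest}, and realizes $f'_{T_1}(t)$ as a convolution of two Gaussians whose exponents combine through the inequality above. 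Everything else — the mean value theorem passage to the endpoint and the continuity of $f'_{T_1}$ on $(0,T]$ — is already in hand from Proposition~\ref{regularity f0} and Corollary~\ref{regularity (0,T]}.
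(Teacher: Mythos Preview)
Your proposal is correct, and it is considerably more direct than the paper's argument. The paper proceeds by the change of variables \eqref{change of variable} to turn \eqref{classical f0} into a heat equation on a moving half-line, rewrites $f_{T_1}$ in terms of $M(s)=-u_\xi(b(s),s)$, derives the Volterra-type integral equation \eqref{M equation} for $M$, bootstraps it to $|M(s)|\le C_n s^n$ for every $n$, and then carries out a lengthy difference-quotient analysis of $J_2'(s)$ (splitting the integral at $s/2$ and $s-s^7$, integrating by parts, and using the crude bound \eqref{epsilon estimation M}) to conclude $M'(s)\to 0$.

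Your route bypasses all of this. You keep the representation $f'_{T_1}(t)=-\int_{-\infty}^{1}\partial_x G(y,0,1,t-\varepsilon_0)\,\partial_t f_0(y,\varepsilon_0)\,dy$ from the proof of Corollary~\ref{regularity (0,T]}, but instead of discarding the Gaussian factor in $\varphi=\partial_t f_0(\cdot,\varepsilon_0)$ you retain the full bound $|\varphi(y)|\le C\varepsilon_0^{-3/2}e^{-C_0 y^2/\varepsilon_0}$ coming from \eqref{Gest} with $\ell=2$, combine it with the $\ell=1$ bound on $\partial_x G$, and use the exact identity $\frac{(1-y)^2}{t-\varepsilon_0}+\frac{y^2}{\varepsilon_0}=\frac{1}{t}+\frac{t}{\varepsilon_0(t-\varepsilon_0)}\bigl(y-\tfrac{\varepsilon_0}{t}\bigr)^2$ to perform the $y$-integral. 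The choice $\varepsilon_0=t/2$ then yields the explicit bound $|f'_{T_1}(t)|\le C\,t^{-2}e^{-C_0/t}$, which is in fact stronger than what the paper obtains and makes the limit $f'_{T_1}(0^+)=0$ immediate. The splitting at $\varepsilon_0=t/2$ is exactly what keeps all derivatives hitting $G$ within the order range $\ell\le 2$ covered by \eqref{Gest}, so no extra regularity input is needed. The paper's approach, on the other hand, is more self-contained in that it relies only on the whole-space heat kernel rather than on the half-line Green function estimate \eqref{Gest} (imported from Garroni--Menaldi), and its bootstrap produces the super-polynomial decay $|M(s)|\le C_n s^n$ for every $n$, which is of independent interest; but for the sole purpose of proving $f_{T_1}\in C^1[0,T]$ your argument is both shorter and sharper.
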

\begin{proof}
By Proposition \ref{regularity f0} and Corollary \ref{regularity (0,T]}, we know $f_{T_1}(t)\in C^1(0,T]\cap C[0,T]$ and thus we only need to prove that $\lim_{t\to 0^+}f'_{T_1}(t)$ exists. We prove it in the following steps. \begin{itshape}Step 1:\end{itshape} We rewrite the problem \eqref{classical f0} as a moving boundary problem and rewrite $f_{T_1}(t)$ as $M(s)$. With the heat kernel $\Gamma$, we derive an integral representation of $M(s)$. \begin{itshape}Step 2:\end{itshape} We analyse the decay rate of $M(s)$ and $M'(s)$ at $0$ by utilizing the decay property of heat kernel $\Gamma$.  \begin{itshape}Step 3:\end{itshape} Using the estimations of $M(s)$, $M'(s)$ and heat kernel $\Gamma$, we derive $\lim_{t\to 0^+}f'_{T_1}(t)=0$.

\begin{itshape}Step 1:\end{itshape} Inspired from \cite{carrillo2013classical}, we introduce a change of variable to transform \eqref{classical f0}  to a moving boundary problem. Let
\bae
\label{change of variable}
y=e^{t}x,\quad s=(e^{2t}-1)/2, \quad u(y,s) = e^{-t} f(x,t).
\eae
Note that PDE \eqref{classical f0} is for the O-U process killed at a stopping time and thus has the Dirichlet boundary condition. By the standard change of variable \eqref{change of variable}, we can transform \eqref{classical f0} into a heat equation with the moving boundary $b(s)=\sqrt{2s+1}$. Actually, we have the new equation
\begin{equation}
\label{classical f0 change}
\left\{
\begin{aligned}
&u_s=u_{yy}, \quad y\in (-\infty, b(s)), s>0\\
&u(-\infty,s)=0, \quad u(b(s), s)=0, \quad s\ge 0,\\
&u(y,0)=\delta(y)\ \mbox{in}\ \CD'(-\infty, b(s)).
\end{aligned}
\right.
\end{equation}
Let $\Gamma$ be the Green's function for the heat equation on the real line:
\beq
\label{Green function}
\Gamma(y,s,\xi,\tau)=\frac{1}{\sqrt{4\pi(s-\tau)}}\exp\{-\frac{(y-\xi)^2}{4(s-\tau)}\},
\quad s>\tau.
\eeq
In the region $-\infty <\xi <b(\tau),\ 0<\tau<h $, recall the  Green's identity
\beq
\label{green identity}
\frac{\partial}{\partial \xi}(\Gamma u_\xi-u\Gamma_\xi)-\frac{\partial}{\partial \tau}(\Gamma u)=0.
\eeq
To derive an expression of $u$, we consider the integration of \eqref{green identity} over such a region and let
\begin{equation*}
\begin{aligned}
&\uppercase\expandafter{\romannumeral 1}=\int_0^s\int_{-\infty}^{b(\tau)}(\Gamma u_\xi)_\xi d\xi d\tau, \quad
\uppercase\expandafter{\romannumeral 2}=\int_0^s\int_{-\infty}^{b(\tau)}(u\Gamma_\xi)_\xi d\xi d\tau, \quad
\uppercase\expandafter{\romannumeral 3}=\int_0^s\int_{-\infty}^{b(\tau)}(\Gamma u)_\tau d\xi d\tau.
\end{aligned}
\end{equation*}
We have
$$
\uppercase\expandafter{\romannumeral 1}=\int_0^s \Gamma u_\xi|_{\xi=b(\tau)}d\tau.
$$
Using the boundary condition of $u(y, s)$ in \eqref{classical f0 change}, we have
$$
\uppercase\expandafter{\romannumeral 2}=0
$$
and
$$
\uppercase\expandafter{\romannumeral 3}=\int_{-\infty}^{b(s)}\Gamma u|_{\tau=s^-} d\xi- \int_{-\infty}^{b(0)}\Gamma u|_{\tau=0} d\xi=u(y,s)-\int_{-\infty}^{b(0)}\Gamma u|_{\tau=0} d\xi.
$$
Plugging in \eqref{green identity},

\begin{equation}
\begin{aligned}
u(y,s)&=\int_{-\infty}^{b(0)}\Gamma(y,s,\xi,0) \delta(\xi) d\xi
+\int_0^s \Gamma(y,s,b(\tau),\tau)u_\xi(b(\tau), \tau)d\tau\\
&=\Gamma(y,s,0,0)-\int_0^s \Gamma(y,s,b(\tau),\tau)M(\tau)d\tau.
\label{expression}
\end{aligned}
\end{equation}
where $M(\tau)=-u_\xi(b(\tau), \tau)$.
Note that the Green function $\Gamma$ is infinitely continuously differentiable, thus the regularity of $u$ depends on $M$. Using Lemma $1$ on Page $217$ of \cite{friedman2008partial}, we know that for any continuous function $\rho$ the following limit holds:
$$
\lim_{y \to b(s)^-} \frac{\partial}{\partial y}\int_0^s \rho(\tau)\Gamma(y,s,b(\tau),\tau)d\tau=\frac 12\rho(s)+\int_0^s\rho(\tau) \Gamma_y(b(s),s,b(\tau),\tau)d\tau.
$$
So differentiating \eqref{expression} at $y=b(s)^-$, we can get the following integral equation
$$
-M(s)=\Gamma_y(b(s),s,0,0)-\frac 12 M(s)-\int_0^s \Gamma_y(b(s),s,b(\tau),\tau)M(\tau)d\tau.
$$
That is
\beq
\begin{aligned}
\label{M equation}
M(s)&=-2\Gamma_y(b(s),s,0,0)+2\int_0^s \Gamma_y(b(s),s,b(\tau),\tau)M(\tau)d\tau\\
&=:2J_1(s)+2J_2(s).
\end{aligned}
\eeq
Recalling the change of variable in \eqref{change of variable} and taking derivatives directly, we know that
\beq
\label{relationship of density}
f_{T_1}(t)=e^{2t}M(s) \quad \text{and} \quad f'_{T_1}(t)=2e^{2t}M(s)+e^{4t}M'(s).
\eeq

\begin{itshape}Step 2:\end{itshape} We shall analyse the decay rate of $M(s)$ at $0$. By heat kernel \eqref{Green function} $\Gamma(y,s,0,0)=\frac{1}{\sqrt{4\pi s}}\exp(\frac{-y^2}{4s})$ and $b(s)=\sqrt{2s+1}$, we have that for any $n\ge 0$, $\lim_{s \to 0^+}\frac{J_1(s)}{s^n}=0$ and thus there exists a constant $C$ such that for $s \in[0,T]$, $n\ge 0$
\beq
\left|J_1(s)\right|\le C s^n.
\label{J1 estimate}
\eeq
Note that
\beq
\label{formula}
\Gamma_y(b(s),s,b(\tau),\tau)=\frac{1}{\sqrt{4\pi(s-\tau)}}\exp\{-\frac{(b(s)-b(\tau))^2}{4(s-\tau)}\}
\{\frac{b(s)-b(\tau)}{-2(s-\tau)}\},
\eeq
thus we have
$$
|\Gamma_y(b(s),s,b(\tau),\tau)| \le \frac{C}{(s- \tau)^{\frac 12}}.
$$
By (\romannumeral4) of Proposition \ref{regularity f0} and \eqref{relationship of density}, there exists another big enough constant $K$ s.t. $\left|M(s)\right|\le K$, $\forall s\in[0,T]$ . Thus
$$
\left|J_2(s)\right|\le C\int_0^s\frac{K}{(s- \tau)^{\frac 12}}=C\sqrt{s}.
$$
Combining with \eqref{J1 estimate}, we also have $\left|M(s)\right|\le \left|J_1(s)\right|+\left|J_2(s)\right|\le C\sqrt{s}$, and thus
$$
\left|J_2(s)\right|\le C\int_0^s\frac{\sqrt{\tau}}{(s- \tau)^{\frac 12}}=Cs.
$$
Using \eqref{J1 estimate} again, we have $\left|M(s)\right|\le Cs$ and thus
$$
\left|J_2(s)\right|\le C\int_0^s\frac{\tau}{(s- \tau)^{\frac 12}}=Cs^{\frac32}.
$$
Using \eqref{J1 estimate} for the third time, we can get $\left|M(s)\right|\le Cs^{\frac32}$, which together with $M(0)=0$ leads to the right derivative of M at $0$ exists and
$$
M'(0^+)=\lim_{s\to 0^+}\frac{M(s)}{s}=0.
$$
Repeating the above calculations step by step, we can get for any $n\ge 0$, there exists a constant that depends on $n$, such that
\beq
\label{M order n}
\left|M(s)\right|\le Cs^n.
\eeq
By \eqref{epsilon estimation} and \eqref{relationship of density}, we know that for any sufficiently small $\varepsilon_0>0$ , there is a constant $C<+\infty$ such that
\beq
\label{epsilon estimation M}
\left|M'(s)\right|\le C\varepsilon_0^{-3},\quad \forall s\in[\varepsilon_0,1].
\eeq

\begin{itshape}Step 3:\end{itshape} In order to prove $f_{T_1}(t)\in C^1[0,T]$, which is equivalent to prove that $\lim_{s\to 0^+}M'(s)$ exists by \eqref{relationship of density}, now we prove that $\lim_{s\to 0^+}M'(s)=0$. Using \eqref{M equation} and the fact $\lim_{s \to 0^+}J'_1(s)=0$, we only need to to prove that
\beq
\label{goal}
\lim_{s \to 0^+}J'_2(s)=0.
\eeq
Using the estimations \eqref{M order n}, \eqref{epsilon estimation M} and heat kernel $\Gamma$, we compute the difference between $A:=\int_0^s \Gamma_y(b(s),s,b(\tau),\tau)M(\tau)d\tau$ and $B:=\int_0^{s+\Delta s} \Gamma_y(b(s+\Delta s),s+\Delta s,b(\tau),\tau)M(\tau)d\tau$.

$A$ can have the following decomposition
$$
A:=\left(\int_0^{\frac s2}+\int_{\frac s2}^s\right) \Gamma_y(b(s),s,b(\tau),\tau)M(\tau)d\tau.
$$
and for $B$,
\begin{equation*}
B:=\left(\int_0^{\frac s2}+\int_{\frac s2}^{\frac s2+\Delta s}+ \int_{\frac s2+\Delta s}^{s+\Delta s} \right) \Gamma_y(b(s+\Delta s),s+\Delta s,b(\tau),\tau)M(\tau)d\tau.
\end{equation*}
Define
$$
\frac{J_2(s+\Delta s)-J_2(s)}{\Delta s}=:I_1+I_2+I_3
$$
where
$$
I_1:=\int_0^{\frac s2} \left[\frac{\Gamma_y(b(s+\Delta s),s+\Delta s,b(\tau),\tau)-
\Gamma_y(b(s),s,b(\tau),\tau)}{\Delta s}\right]M(\tau)d\tau,
$$
$$
I_2:=\frac{1}{\Delta s}\int_{\frac s2}^{\frac s2+\Delta s}\Gamma_y(b(s+\Delta s),s+\Delta s,b(\tau),\tau)M(\tau)d\tau
$$
and
$$
I_3:=\frac{1}{\Delta s}\left[\int_{\frac s2+\Delta s}^{s+\Delta s} \Gamma_y(b(s+\Delta s),s+\Delta s,b(\tau),\tau)M(\tau)d\tau-\int_{\frac s2}^s \Gamma_y(b(s),s,b(\tau),\tau)M(\tau)d\tau\right].
$$
Thus to get \eqref{goal}, now it suffices to show that
\beq
\label{first term}
\lim_{\Delta s\to 0}\left|I_1\right|\le\int_0^{\frac s2} \left|\partial_s
\Gamma_y(b(s),s,b(\tau),\tau)M(\tau)\right|d\tau=o(1),
\eeq
\beq
\label{second term}
\lim_{\Delta s\to 0}\left|I_2\right|=o(1)
\eeq
and
\beq
\label{third term}
\lim_{\Delta s\to 0}\left|I_3\right|=o(1).
\eeq
The above $=o(1)$ means that the left side goes to $0$ as $s \to 0^+$.

Note that for $\tau \le \frac34 s$, then $\Gamma_y$ and $\partial_s\Gamma_y$ terms in \eqref{first term} and \eqref{second term} can be bounded by a polynomial order with respect to $s^{-1}$, which together with \eqref{M order n} immediately derives \eqref{first term} and \eqref{second term}. Thus we only need to focus on proving \eqref{third term}. With a simple change of variable, we have
\begin{equation*}
\begin{aligned}
&\int_{\frac s2+\Delta s}^{s+\Delta s} \Gamma_y(b(s+\Delta s),s+\Delta s,b(\tau),\tau)M(\tau)d\tau\\
=&\int_{\frac s2}^{s} \Gamma_y(b(s+\Delta s),s+\Delta s,b(\tau+\Delta s),\tau+\Delta s)M(\tau+\Delta s)d\tau\\
=&\int_{\frac s2}^{s} \Gamma_y(b(s+\Delta s),s+\Delta s,b(\tau+\Delta s),\tau+\Delta s)M(\tau)d\tau\\
+&\int_{\frac s2}^{s} \Gamma_y(b(s+\Delta s),s+\Delta s,b(\tau+\Delta s),\tau+\Delta s)\left[M(\tau+\Delta s)-M(\tau)\right]d\tau.
\end{aligned}
\end{equation*}
We define
$$
I_3:=I_{3,1}+I_{3,2}
$$
where
$$
I_{3,1}=\frac{1}{\Delta s}\int_{\frac s2}^{s} \left[\Gamma_y(b(s+\Delta s),s+\Delta s,b(\tau+\Delta s),\tau+\Delta s)- \Gamma_y(b(s),s,b(\tau),\tau)\right]M(\tau)d\tau
$$
and
$$
I_{3,2}=\int_{\frac s2}^{s} \Gamma_y(b(s+\Delta s),s+\Delta s,b(\tau+\Delta s),\tau+\Delta s)\frac{M(\tau+\Delta s)-M(\tau)}{\Delta s}d\tau.
$$
Thus to show \eqref{third term}, it suffices to prove
\beq
\label{fourth term}
\lim_{\Delta s\to 0}\left|I_{3,1}\right|=o(1)
\eeq
and
\beq
\label{fifth term}
\lim_{\Delta s\to 0}\left|I_{3,2}\right|=o(1).
\eeq
For \eqref{fourth term}, by \eqref{formula} we have
\begin{equation*}
\begin{aligned}
&\Gamma_y(b(s+\Delta s),s+\Delta s,b(\tau+\Delta s),\tau+\Delta s)-\Gamma_y(b(s),s,b(\tau),\tau)\\
&=\frac{1}{\sqrt{4\pi(s-\tau)}}\exp\{-\frac12\frac{b(s+\Delta s)-b(\tau+\Delta s)}{b(s+\Delta s)+b(\tau+\Delta s)}\}
\{\frac{-1}{b(s+\Delta s)+b(\tau+\Delta s)}\}\\
&-\frac{1}{\sqrt{4\pi(s-\tau)}}\exp\{-\frac12\frac{b(s)-b(\tau)}{b(s)+b(\tau)}\}
\{\frac{-1}{b(s)+b(\tau)}\},
\end{aligned}
\end{equation*}
and thus there exists a constant $C<+\infty$ independent of the choices of $s$, $\tau$ and $\Delta s$ such that
$$
\left|\Gamma_y(b(s+\Delta s),s+\Delta s,b(\tau+\Delta s),\tau+\Delta s)- \Gamma_y(b(s),s,b(\tau),\tau)\right|\le C\cdot \Delta s\cdot \frac{1}{\sqrt{s-\tau}},
$$
which together with \eqref{M order n} derive $\lim_{\Delta s\to 0}\left|I_{3,1}\right|=o(1)$.

Finally, for \eqref{fifth term}, note that $M(\tau)\in C^1[\frac s2, s]$ and that $|\Gamma_y(b(s+\Delta s),s+\Delta s,b(\tau+\Delta s),\tau+\Delta s)|\le \frac{C}{\sqrt{s-\tau}}$. By the dominated convergence theorem, we have the limit in \eqref{fifth term} exists and equals to
\beq
\label{sixth term}
I_{3,3}:=\int_{\frac s2}^{s} \Gamma_y(b(s),s,b(\tau),\tau)M'(\tau)d\tau.
\eeq
To prove $\left|I_{3,3}\right|=o(1)$, one may further decompose it as
\begin{equation*}
\begin{aligned}
I_{3,3}=&\int_{\frac s2}^{s-s^7} \Gamma_y(b(s),s,b(\tau),\tau)M'(\tau)d\tau+\int_{s-s^7}^{s} \Gamma_y(b(s),s,b(\tau),\tau)M'(\tau)d\tau\\
=&:I_4+I_5.
\end{aligned}
\end{equation*}
For $I_4$, note that $\Gamma_y(b(s),s,b(\tau),\tau)$ and $M(\tau)$ are both smooth on $[\frac s2, s-s^7]$, we may use the integration by part and have

\begin{equation*}
\begin{aligned}
|I_4|\le&  \left|\Gamma_y(b(s),s,b(s-s^7),s-s^7)\cdot M(s-s^7)\right|+\left|\Gamma_y(b(s),s,b(\frac s2),\frac s2)\cdot M(\frac s2)\right|\\
+&\left|\int_{\frac s2}^{s-s^7} \partial_\tau \Gamma_y(b(s),s,b(\tau),\tau)M'(\tau)d\tau\right|.
\end{aligned}
\end{equation*}
where all the terms are small since $|M(\tau)|$ is much less than any polynomial of $\tau$ and thus $I_4=o(1)$. For $I_5$, recall that $|\Gamma_y(b(s),s,b(\tau),\tau)|\le \frac{C}{\sqrt{s-\tau}}$ and $|M'(\tau)|\le Cs^{-3}$ on $[s-s^7,s]$, we have
$$
|I_5|\le s^{-3}\int_{s-s^7}^s \frac{C}{\sqrt{s-\tau}}d\tau\le C\sqrt s=o(1).
$$
which derives $\lim_{\Delta s\to 0}\left|I_{3,2}\right|=o(1)$ and thus $\lim_{\Delta s\to 0}\left|I_3\right|=o(1)$. Combining \eqref{first term}, \eqref{second term} and \eqref{third term}, we got $\lim_{s\to 0^+}J'_2(s)=0$ and then $\lim_{s\to 0^+}M'(s)=0$, which together with \eqref{relationship of density} derive $\lim_{t\to 0^+}f'_{T_1}(t)=0$ and $f_{T_1}(t)\in C^1[0,T]$.

\end{proof}


Next, we can do the first iteration.
\begin{proposition}
\label{prop first iteration}
Let $f_1(x,t)$ be the density function of the measure induced by $F_1(\cdot,t)$ defined in \eqref{iteration_cdf}, it satisfies the following initial condition and the recursive relation
\begin{equation}
\begin{aligned}
\label{first iteration}
f_{1}(x,0)&=0, \quad \forall x\in (-\infty,1),\\
f_{1}(x,t)&=\int^{t}_{0}f_{0}(x,t-s)f_{T_{1}}(s)ds,\quad \forall x\in(-\infty,0)\cup(0,1), t>0.
\end{aligned}
\end{equation}
For any fixed $T>0$, we have
\begin{itemize}
    \item[(\romannumeral1)]
    $f_1(x,t)$ is the classical solution of the following PDE on $(-\infty, 1]\times[0,T]$:
    \begin{numcases}{}
    \frac{\partial f_{1}}{\partial t}-\frac{\partial}{\partial x}(xf_1)-\frac{\partial^2}{\partial x^2}f_1=0,\quad x\in(-\infty,0)\cup (0,1), t\in(0,T], \label{1}\\
    f_1(0^{-},t)=f_{1}(0^{+},t), \quad \frac{\partial}{\partial x} f_{1}(0^{-},t)-\frac{\partial}{\partial x}f_{1}(0^{+},t)=f_{T_1}(t), \quad t\in(0,T], \label{2}\\
    f_{1}(-\infty,t)=0, \quad f_1(1,t)=0, \quad t\in[0,T],\label{3} \\
    f_1(x,0)=0,\quad x\in(-\infty,1)\label{4}
    \end{numcases}
    with
    \beq
    \label{decay 1}
    \lim_{x \to -\infty }\partial_xf_1(x, t)=0, \quad t\in [0,T].
    \eeq
    \item[(\romannumeral2)]
    There is a big enough constant $C_T$ depending only on $T$ such that
    \beq
    |f_{1}(x,t)|\le C_T,\quad  \forall x\in(-\infty,0)\cup(0,1), t \in [0,T],
    \label{bound}
    \eeq
    \beq
    \left|\frac{\partial}{\partial x}f_{1}(x,t)\right|\le C_T, \quad \forall x\in(-\infty,0)\cup(0,1), t \in [0,T].
    \label{bound derivative}
    \eeq
    And at the domain boundary:
    \beq \label{bdf1}
    \left|\frac{\partial}{\partial x}f_{1}(0^-,t)\right| \le C_T, \quad \left|\frac{\partial}{\partial x}f_{1}(0^+,t)\right| \le C_T,\quad \left|\frac{\partial}{\partial x}f_{1}(1^-,t)\right| \le C_T,\quad t \in [0,T].
    \eeq
    \item[(\romannumeral3)]
    For $t>0$, recalling that the density of the second jumping time
    \begin{equation}
    \label{convolution second jump}
    f_{T_{2}}(t)=\int_{0}^{t}f_{T_{1}}(t-s)f_{T_{1}}(s)ds,
    \end{equation}
    we have
    \begin{equation}
    -\frac{\partial f_{1}}{\partial x}(1,t)=f_{T_2}(t).
    \label{first flux}
    \end{equation}
\end{itemize}
\end{proposition}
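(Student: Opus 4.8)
\emph{Setup and the recursion \eqref{first iteration}.}
The starting point is the strong-Markov identity \eqref{k-iteration} with $k=1$: since $F_{T_1}$ is absolutely continuous with density $f_{T_1}\in C[0,T]$ (Proposition~\ref{regularity f0}(\romannumeral4)), $F_1(x,t)=\int_0^tF_0(x,t-s)f_{T_1}(s)\,ds$, and differentiating in $x$ under the integral (legitimate by the density bound \eqref{density}) gives \eqref{first iteration}; $f_1(x,0)=0$ follows because $\|f_{T_1}\|_{C[0,t]}\to f_{T_1}(0)=0$ while $\int_0^tf_0(x,t-s)\,ds\le\int_0^T\!\big(2\pi(1-e^{-2\tau})\big)^{-1/2}\,d\tau<\infty$ stays bounded. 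Everything else is read off \eqref{first iteration}, the Green-function estimate \eqref{Gest} for $f_0=G(0,0,\cdot,\cdot)$, and the regularity of $f_0,f_{T_1}$ from Propositions~\ref{regularity f0}--\ref{regularity [0,T]}.

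\emph{Interior equation, boundary data, and the flux at $x=1$ (the routine part).}
On $\{\eta\le|x|\le1\}$ the bounds \eqref{Gest} give $|\partial^\ell f_0(x,\tau)|\le C(\eta,T)$ uniformly in $\tau\in(0,T]$ for $\ell=1,2$ (the Gaussian beats the negative power of $\tau$), so one may differentiate \eqref{first iteration} under the integral in $x$, and in $t$ by Leibniz using $f_0(x,0^+)=0$ for $x\ne0$ (Proposition~\ref{regularity f0}(\romannumeral2)); since $f_0$ solves the homogeneous equation and $\mathcal{L}_x^*$ passes under $\int_0^t\cdot\,f_{T_1}(s)\,ds$, this yields $\partial_tf_1=\mathcal{L}_x^*f_1$ with $\partial_tf_1,\partial_{xx}f_1,\partial_xf_1$ continuous there, hence \eqref{1} after letting $\eta\downarrow0$. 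The boundary values \eqref{3} and the decay \eqref{decay 1} are immediate from $f_0(1,\cdot)\equiv0$ and \eqref{Gest}, and \eqref{bound} follows from $f_0\le p_{\text{ou}}$ and the integrable majorant above (which also gives \eqref{bound derivative} on $\{\eta\le|x|\le1\}$). For the flux at $x=1$: by Proposition~\ref{regularity f0}(\romannumeral2)--(\romannumeral3), $\partial_xf_0$ is uniformly continuous on $[1-\delta,1]\times[0,T]$, so $\partial_xf_0(x,\cdot)\to\partial_xf_0(1,\cdot)$ uniformly as $x\to1^-$; combined with $\partial_xf_1(x,t)=\int_0^t\partial_xf_0(x,t-s)f_{T_1}(s)\,ds$ (valid for $x$ near $1$, where the Gaussian tames $\tau^{-1}$ at $\tau=0$), the existence of $\partial_xf_1(1^-,t)$ from the next paragraph, and $\partial_xf_0(1,\tau)=-f_{T_1}(\tau)$ (Proposition~\ref{regularity f0}(\romannumeral4)), one gets $\partial_xf_1(1,t)=-\int_0^tf_{T_1}(t-s)f_{T_1}(s)\,ds=-f_{T_2}(t)$ by \eqref{convolution second jump}, i.e. \eqref{first flux}; $|f_{T_2}|\le\|f_{T_1}\|_{C[0,T]}$ completes \eqref{bdf1}.

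\emph{The interface condition at $x=0$ (the hard part).}
Here \eqref{first iteration} cannot be differentiated under the integral, because $\partial_xf_0(x,\tau)$ has a non-integrable $\tau^{-1}$-singularity as $\tau\to0$ along $x=0$ --- exactly the singularity producing the prescribed jump. I isolate it via the decomposition developed in the proof of Proposition~\ref{regularity [0,T]}: with $s(t)=(e^{2t}-1)/2$ and $b(\sigma)=\sqrt{2\sigma+1}$, a direct computation identifies $e^t\Gamma(e^tx,s(t),0,0)=p_{\text{ou}}(x,t)$, so \eqref{change of variable}--\eqref{expression} give $f_0(x,t)=p_{\text{ou}}(x,t)-q(x,t)$ with $q(x,t)=e^t\int_0^{s(t)}\Gamma(e^tx,s(t),b(\sigma),\sigma)M(\sigma)\,d\sigma$; since $b(\sigma)\ge1$ stays away from a neighbourhood of $x=0$ and $|M|$ is bounded on $[0,T]$ (established in that proof), $q(\cdot,t)$ is $C^\infty$ in $x$ near $0$ with $x$-derivatives bounded uniformly for $t\le T$. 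Writing correspondingly $f_1=P_1-Q_1$ with $P_1(x,t)=\int_0^tp_{\text{ou}}(x,t-s)f_{T_1}(s)\,ds$ and $Q_1(x,t)=\int_0^tq(x,t-s)f_{T_1}(s)\,ds$, the term $Q_1$ is $C^1$ in $x$ across $0$ and contributes no jump. In $P_1$ I substitute $\tau=t-s$ and split $f_{T_1}(t-\tau)=f_{T_1}(t)-[f_{T_1}(t)-f_{T_1}(t-\tau)]$; the bound $|f_{T_1}(t)-f_{T_1}(t-\tau)|\le\|f_{T_1}'\|_{C[0,T]}\,\tau$ --- the place where Proposition~\ref{regularity [0,T]} is essential --- cancels the $\tau^{-1}$ and makes the corresponding integral $C^1$ across $0$ by dominated convergence, leaving only $f_{T_1}(t)\int_0^t\partial_xp_{\text{ou}}(x,\tau)\,d\tau$. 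The substitution $v=x^2/\big(2(1-e^{-2\tau})\big)$ turns this last integral into $\mp\tfrac1{\sqrt{4\pi}}\int_{v_0(x)}^\infty\frac{v^{-1/2}e^{-v}}{1-x^2/(2v)}\,dv$ with $v_0(x)=x^2/\big(2(1-e^{-2t})\big)\to0$, whose limit as $x\to0^\pm$ is $\mp\tfrac12$; hence the jump of $\partial_xP_1$ at $0$ equals $f_{T_1}(t)$ while its remaining part has equal one-sided limits, so $\partial_xf_1(0^-,t)-\partial_xf_1(0^+,t)=f_{T_1}(t)$, which is \eqref{2} (continuity $f_1(0^-,t)=f_1(0^+,t)$ follows from dominated convergence in \eqref{first iteration}). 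The same substitution gives $\big|\int_0^t\partial_xp_{\text{ou}}(x,\tau)\,d\tau\big|\le C_T$ uniformly, which with the $Q_1$-bound yields \eqref{bound derivative} near $0$ and the $0^\pm$ bounds in \eqref{bdf1}.

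\emph{The main obstacle.}
The only genuine difficulty is the interface step: extracting the $x$-derivative jump at $0$ without differentiating under the integral. Its resolution rests on two ingredients already prepared --- the splitting $f_0=p_{\text{ou}}-q$ with $q$ smooth near $0$, and, crucially, $f_{T_1}\in C^1[0,T]$ (Proposition~\ref{regularity [0,T]}), whose Lipschitz bound absorbs the $\tau^{-1}$-singularity in all but the leading term, for which the classical heat-potential jump computation applies. Everything else reduces to differentiation under the integral sign controlled by \eqref{density}, \eqref{Gest}, and the continuity statements of Proposition~\ref{regularity f0}.
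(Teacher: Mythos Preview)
Your proof is correct, and the routine parts (differentiation under the integral away from $x=0$, the flux at $x=1$ via uniform continuity of $\partial_xf_0$ near the right boundary, and the bound \eqref{bound}) match the paper closely. The substantive difference is how you establish the interface condition \eqref{2} and the uniform bound \eqref{bound derivative} near $x=0$.

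The paper does \emph{not} use the decomposition $f_0=p_{\text{ou}}-q$. Instead it expresses $\partial_xf_0(x_1,t-s)$ by integrating the PDE in $x$: for $x_1>0$,
\[
\partial_xf_0(x_1,t-s)=-x_1f_0(x_1,t-s)-\int_{x_1}^1\partial_tf_0(y,t-s)\,dy-f_{T_1}(t-s),
\]
and analogously on the left of $0$. Subtracting, the difference $\partial_xf_1(-x_2,t)-\partial_xf_1(x_1,t)$ becomes a sum of three pieces $I_6+I_7+I_8$; the first two vanish as $x_1,x_2\to0^+$ by the density bound \eqref{density}, and the delicate piece $I_8=\int_0^t\big[\int_{\mathbb{R}\setminus[-x_2,x_1]}\partial_tf_0\,dy+f_{T_1}(t-s)\big]f_{T_1}(s)\,ds$ is handled by splitting at $t-\delta$, applying Fubini, and then integrating by parts in $s$ using $f_{T_1}\in C^1[0,T]$; the surviving boundary term is exactly $f_{T_1}(t)$. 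The same PDE-based expression for $\partial_xf_1$ is reused to prove \eqref{bound derivative}, again via integration by parts and the bound $\int_x^1f_0(y,\cdot)\,dy\le1$.

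Your route---peeling off the free OU kernel and invoking the classical single-layer heat-potential jump---is more explicit and makes transparent \emph{why} a jump of size $f_{T_1}(t)$ appears: it is literally the textbook jump relation applied to $p_{\text{ou}}$, with the remainder $q$ smooth near $0$ because the source points $b(\sigma)\ge1$ stay away. The paper's route is less explicit but stays entirely within the Green-function machinery already set up, without reopening the moving-boundary representation from the proof of Proposition~\ref{regularity [0,T]}. Both arguments hinge on Proposition~\ref{regularity [0,T]} in the same essential way: you use $\|f_{T_1}'\|_{C[0,T]}$ as a Lipschitz constant to kill the $\tau^{-1}$ singularity, the paper uses it to justify integration by parts in $s$. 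One small point: your bound on $\partial_xf_1$ ``on $\{\eta\le|x|\le1\}$'' is stated with an $\eta$-dependent constant, so \eqref{bound derivative} on all of $(-\infty,0)\cup(0,1)$ requires patching your near-$0$ estimate with a fixed $\eta_0$ and the Gaussian decay for $|x|\ge\eta_0$ (including $x\to-\infty$); this is straightforward from \eqref{Gest} but worth saying.
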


\begin{proof}
By \eqref{CDF_iterate} and the Fubini formula, we immediately get \eqref{first iteration}. As we have already known $f_0(x,t)$ satisfies PDE \eqref{classical f0}, thus from iteration relationship \eqref{first iteration} and the regularities for $f_0(x,t)$ in Proposition \ref{regularity f0}, we can check that $f_1(x,t)$ satisfies PDE \eqref{classical fn} with $n=1$ and the estimations for $f_1(x,t)$ are valid.

To prove (\romannumeral1), by the regularities of $f_0$ in Proposition \ref{regularity f0}, we have $\forall x\in(-\infty,0)\cup (0,1)$,
\begin{equation*}
\frac{\partial}{\partial x}f_1(x,t)=\int^{t}_{0}\frac{\partial}{\partial x}f_{0}(x,t-s)f_{T_{1}}(s)ds \quad \text{and} \quad  \frac{\partial^2}{\partial x^2}f_1(x,t)=\int^{t}_{0}\frac{\partial^2}{\partial x^2}f_{0}(x,t-s)f_{T_{1}}(s)ds,
\end{equation*}
which together with the decay property \eqref{decay 0} for $f_0$ derive \eqref{decay 1}.\\
Moreover,
\begin{equation*}
\begin{aligned}
\frac{\partial}{\partial t}f_1(x,t)&=\frac{\partial}{\partial t}\int^t_0 f_0(x,t-s)f_{T_{1}}(s)ds\\
&=\lim_{\Delta t \to 0}\int^{t}_{0}\frac{f_{0}(x,t+\Delta t-s)-f_{0}(x,t-s)}{\Delta t}f_{T_{1}}(s)ds\\
&+\lim_{\Delta t \to 0}\frac{\int^{t+\Delta t}_{t}f_{0}(x,t+\Delta t-s)f_{T_{1}}(s)ds}{\Delta t}\\
&=\int_0^t \frac{\partial}{\partial t} f_0(x,t-s) f_{T_{1}}(s) ds.
\label{diff t}
\end{aligned}
\end{equation*}
Thus we have checked \eqref{1} and also got the continuity of $\frac{\partial^2}{\partial x^2}f_1(x,t)$ and $\frac{\partial}{\partial t}f_1(x,t)$. At the same time, \eqref{3} and \eqref{4} are obvious because of the boundary conditions of $f_{0}$ and the formula \eqref{first iteration}. So for the rest of the proof we concentrate on verifying \eqref{2}, which is composed of
\begin{equation}
f_{1}(0^{-},t)=f_{1}(0^{+},t),\quad t\in(0,T] \label{bc 1}
\end{equation}
and
\begin{equation}
\frac{\partial}{\partial x} f_{1}(0^-,t)-\frac{\partial}{\partial x} f_{1}\left(0^+, t\right)=f_{T_{1}}(t),\quad t\in(0,T]. \label{bc 2}
\end{equation}

To show \eqref{bc 1}, note that $\int^{1}_{0}\frac{1}{\sqrt {1-e^{-2s}}}ds <\infty$ and thus for any $\varepsilon >0$, $\exists 0<\delta<t$ s.t. $\int^{\delta}_{0}\frac{1}{\sqrt {1-e^{-2s}}}ds < \frac{\varepsilon}{c}$, where the constant $c$ is the same as in \eqref{density}. With \eqref{first iteration}, we have for any $x\ne 0$,
\begin{equation}
f_{1}\left(x,t\right)=\int_{0}^{t} f_{0}\left(x,t- s\right)f_{T_{1}}(s)ds=\int_{0}^{t-\delta} f_0(x, t-s) f_{T_1}(s) ds+\int_{t-\delta}^{t} f_0(x, t-s) f_{T_1}(s) ds.
\end{equation}
For the 2nd term above, using \eqref{density}:
\begin{equation*}
\int_{t-\delta}^{t} f_0(x, t-s) f_{T_1}(s) ds \le\big\|f_{T_1}\big\|_{L^\infty [0, t]}\\
\int_{0}^{\delta} \frac{c}{\sqrt {1-e^{-2s}}} ds \leq\big\|f_{T_{1}}\big\|_{L^\infty [0,t]} \cdot \varepsilon.
\end{equation*}
While for the first term, one may use \eqref{regularity 3} and see that
\begin{equation*}
\lim_{x_1\to 0^{+},x_2\to 0^{+}} \int_{0}^{t-\delta}\left|f_{0}(x_1,t-s)-f_{0}(-x_2, t-s)\right| f_{T_{1}}(s) ds=0.
\end{equation*}
Since $\varepsilon$ is arbitrary, we get \eqref{bc 1}.

Now to prove  \eqref{bc 2}: noting that
$$ \frac{\partial}{\partial x} f_{1}\left(x_1,t\right)=\int_{0}^{t} \frac{\partial}{\partial x} f_0(x_1,t-s) f_{T_{1}}(s) d s, \quad x_1\in (0,1)
$$
and for any $t-s\ne 0$,
\begin{equation*}
\begin{aligned}
\frac{\partial}{\partial x} f_0(x_1, t-s)&=-\int_{x_1}^{1} \frac{\partial^{2}}{\partial x^2} f_0(x, t-s)dx+\frac{\partial}{\partial x} f_0(1, t-s)\\
&=-\int_{x_1}^{1}\left[\frac{\partial f_{0}}{\partial t}(x,t-s)-\frac{\partial}{\partial x}(x f_0)(x,t-s)\right] dx+\frac{\partial}{\partial x} f_0(1,t-s)\\
&=\int_{x_1}^{1}\left[\frac{\partial}{\partial x}(x f_0)(x, t-s)-\frac{\partial f_0}{\partial t}(y,t-s)\right]dx+\frac{\partial}{\partial x} f_0(1,t-s)\\
&=f_0(1, t-s)-x_1 f_0(x_1, t-s)-\int_{x_1}^{1} \frac{\partial f_0}{\partial t}(x, t-s) dx+\frac{\partial}{\partial x} f_0(1, t-s)\\
&=-x_1 f_0(x_1, t-s)-\int_{x_1}^{1} \frac{\partial f_{0}}{\partial t}(x, t-s) dx-f_{T_1}(t-s).
\end{aligned}
\end{equation*}
Thus:
\begin{equation}
\begin{aligned}
&\frac{\partial}{\partial x} f_1(x_1,t)=\int_{0}^{t} \frac{\partial}{\partial x} f_0(x_1,t-s) f_{T_1}(s) ds\\
&=\int_{0}^{t}\left[-x_1 f_0(x_1, t-s)-\int_{x_1}^{1} \frac{\partial f_{0}}{\partial t}(x, t-s) dx-f_{T_1}(t-s)\right] f_{T_{1}}(s) ds.
\label{delta x}
\end{aligned}
\end{equation}
Similarly for any $x_2>0$, we have
\begin{equation*}
\begin{aligned}
&\frac{\partial}{\partial x} f_0(-x_2, t-s)\\
=&\int_{-\infty}^{-x_2} \frac{\partial^2}{\partial x^2} f_0(x,t-s) dx +0\\
=&\int_{-\infty}^{-x_2}\left[\frac{\partial f_{0}}{\partial t}(x, t-s)-\frac{\partial}{\partial z}(xf_0(x, t-s))\right] dx\\
=&x_2 f_0(-x_2,t-s)+\int_{-\infty}^{-x_2} \frac{\partial f_0}{\partial t}(x, t-s) dx.
\end{aligned}
\end{equation*}
And thus
\begin{equation}
\label{delta y}
\frac{\partial}{\partial x} f_1(-x_2, t)=\int_0^t\left[x_2 f_0(-x_2, t-s)+\int_{-\infty}^{-x_2} \frac{\partial f_0}{\partial t}(x, t-s) dx \right]  f_{T_1}(s) ds.
\end{equation}

Combining \eqref{delta x} and \eqref{delta y}, we have for all $x_1\in (0,1)$ and $x_2 >0$,
\begin{equation}
\begin{aligned}
\label{x y}
&\frac{\partial}{\partial x} f_1(-x_2, t)-\frac{\partial}{\partial x} f_1(x_1, t)\\
=&x_2 \int_{0}^{t} f_{0}(-x_2, t-s) f_{T_1}(s) d s+x_1 \int_{0}^{t} f_{0}(x_1, t-s) f_{T_{1}}(s) d s\\
+&\int_{0}^{t}\left[\int_{\BR\setminus[-x_2, x_1]} \frac{\partial f_0}{\partial t}(x, t-s) dx+f_{T_1}(t-s)\right] f_{T_1}(s) ds\\
=&: I_6 +I_7+I_8.
\end{aligned}
\end{equation}
For $I_6$, we have by \eqref{density}:
\begin{equation*}
I_6 \le \|f_{T_{1}}\|_{L^\infty[0, t]} \cdot x_2 \cdot \int_{0}^{t} \frac{c}{\sqrt {1-e^{-2s}}} ds \to 0 \quad as \quad  x_2 \to 0^+.
\end{equation*}
And $I_7\to 0$ by the same argument, it now suffices to show
\begin{equation}
I_8 \to f_{T_1}(t) \quad as \quad x_1,x_2 \to 0^+.
\label{end}
\end{equation}
In the rest of our calculations, integrand of  $I_8$ will be called $H(s)$. As a result of Proposition \ref{regularity [0,T]}, for any $\varepsilon >0$, we let the chosen $\delta$ small enough such that
\begin{align}
&\quad \delta \left\| f_{T_1}\right\|^2_{L^\infty [0,t]}<\varepsilon\label{assumption 1}\\
&\quad \int^{t_2}_{t_1}\left|f^{\prime}_{T_1}(s)\right| ds<\varepsilon, \quad \forall t_1<t_2<t,\ t_2 - t_1 <\delta\label{assumption 2}\\
&\quad P(T_1<\delta)<\varepsilon. \label{assumption 3}
\end{align}
Then for the fixed $\delta >0$ defined above,
\begin{equation}
I_8=\int_{0}^{t-\delta} H(s) d s+\int_{t-\delta}^{t} H(s) ds=:I_{8,1}+I_{8,2}.
\label{end 2}
\end{equation}
For $I_{8,1}$, we have by \eqref{regularity 3} and \eqref{T_1 pdf},
\begin{equation*}
\begin{aligned}
\left|I_{8,1}\right|=&\left| \int_{0}^{t-\delta}\left[\int_{R \setminus [-x_2, x_1]} \frac{\partial f_0}{\partial t}(y, t-s) dy+f_{T_1}(t-s)\right] f_{T_1}(s) ds\right|\\
=&\left| \int_{0}^{t-\delta}\int_{-x_2}^{x_1} \frac{\partial f_0}{\partial t}(y, t-s)f_{T_1}(s) dy ds\right|\\
\le& \|f_{T_1}\|_{L^\infty [0, t]} \int_{0}^{t-\delta} \int_{-x_2}^{x_1}\left\|\frac{\partial f_0}{\partial t}\right\|_{L^\infty (-\infty, 1] \times [\delta,T]} dy ds\\
\le& t \cdot \|f_{T_1}\|_{L^\infty[0, T]} \cdot \left\| \frac{\partial f_{0}}{\partial t}\right\|_{L^\infty(-\infty, 1] \times[\delta, T]}\cdot(x_1+x_2)
\end{aligned}
\end{equation*}
which $\to 0$ as $x_1, x_2 \to 0$. As for $I_{8,2}$,
\begin{equation*}
I_{8,2}=\int_{t-\delta}^{t}\left[\int_{\BR \setminus [-x_2, x_1 ]} \frac{\partial f_0}{\partial t}(y, t-s) d y+f_{T_{1}}(t-s)\right] f_{T_{1}}(s) ds.
\end{equation*}
One may first see by \eqref{assumption 1}, we have $\int_{t-\delta}^{t} f_{T_{1}}\left(t-s\right) f_{T_{1}}(s) d s \leq \varepsilon$. Moreover, for any $x_1,x_2 >0$, note that function $\frac{\partial f_{0}}{\partial t}(y,t-s)f_{T_1}(s)$ is bounded and continuous on the region $(R\setminus[-x_2,x_1])\times [t-\delta,t]$. One may apply Fubini's formula and have:
\begin{equation}
\label{end2.2}
I_{8,2}=\int_{R \setminus [-x_2, x_1]} \int_{t-\delta}^{t} \frac{\partial f_0}{\partial t}(y, t-s) f_{T_{1}}(s) ds dy.
\end{equation}
At the same time, by \eqref{Gest} we have for any fixed $t>0, y\notin [-x_2,x_1]$,
\begin{equation*}
f_0(y, t- s) f_{T_1}(s) \in C^1[t-\delta , t)
\end{equation*}
and
\begin{equation*}
\lim _{s \rightarrow t^-}  f_0(y, t-s) f_{T_1}(s)=0.
\end{equation*}
Thus, one may apply integration by parts and have
\begin{equation}
\begin{aligned}
\label{end2.3}
&\int_{t-\delta}^{t} \frac{\partial f_0}{\partial t}(y,t-s) f_{T_1}(s) ds\\
=&\left(-f_0(y,t-s) f_{T_1}(s)\right)\Big|_{t-\delta} ^{t}+\int_{t-\delta}^{t} f_0(y, t-s) f_{T_1}^{\prime}(s) ds\\
=&f_0(y, \delta)f_{T_1}(t-\delta)+\int_{t-\delta}^{t}f_0(y,t-s)f_{T_1}^{\prime}(s) ds.
\end{aligned}
\end{equation}
Plugging \eqref{end2.3} back to \eqref{end2.2} and applying the Fubini theorem once again, we have
\begin{equation}
\begin{aligned}
I_{8,2}&=\left[\int_{R \setminus[-x_2, x_1]} f_0(y, \delta) dy \right]f_{T_1}(t-\delta)+ \int_{t-\delta}^{t}\int_{R \setminus[-x_2, x_1]}f_0(y,t-s)dy f_{T_1}^{\prime}(s) ds\\
&=:I_{9}+I_{10}.
\label{end2.4}
\end{aligned}
\end{equation}
First for $I_{10}$, noting that $f_0$ is a p.d.f., for any $s \in (t-\delta, t)$ we have
\begin{equation*}
\int_{\BR \setminus[-x_2, x_1]}f_0(y,t-s)dy\le 1,
\end{equation*}
which together with \eqref{assumption 2} derive
\begin{equation}
\left|I_{10} \right|\le \int_{t-\delta}^{t} \left|f_{T_1}^{\prime}(s)\right| ds <\varepsilon.
\end{equation}
Then for $I_9$, by \eqref{assumption 3} we have
\begin{equation*}
\begin{aligned}
\lim_{x_1 \to 0^+, x_2 \to 0^+}\int_{R \setminus[-x_2, x_1]} f_0(y, \delta) dy=\int_{-\infty}^{1}f_0(y, \delta)dy=P(T_1>\delta)\in [1-\varepsilon, 1]
\end{aligned}
\end{equation*}
and
\begin{equation*}
\begin{aligned}
\big|f_{T_1}(t-\delta)-f_{T_1}(t)\big|<\varepsilon.
\end{aligned}
\end{equation*}
Thus we have for all sufficiently small $x_1>0, x_2>0$,
\begin{equation}
\begin{aligned}
\left|I_9-f_{T_1}(t)\right|=\left|I_9-f_{T_1}(t-\delta)\right|+\left|f_{T_1}(t-\delta)-f_{T_1}(t) \right|<(\|f_{T_1}\|_{L^{\infty}[0,t]}+1)\varepsilon. \label{end2.5}
\end{aligned}
\end{equation}
Now combing from \eqref{end 2} to \eqref{end2.5}, we have concluded that $I_8 \to f_{T_1}(t)\ \text{as} \ x_1,x_2 \to 0^+$, which together with $I_6\to 0$ and $I_7\to 0$ derive \eqref{bc 2}.

As for (\romannumeral2), we first derive \eqref{bound} and \eqref{bound derivative} that are essential in getting \eqref{first flux} and also set the basis for subsequent iterations. First we verify \eqref{bound} and without loss of generality, one may assume $T>1$. So when $t\in(0,T]$ and $x\in (-\infty,0)\cup(0,1)$,
\begin{equation*}
\begin{aligned}
f_{1}(x, t) &=\int_{0}^{t} f_{0}(x, t-s) f_{T_{1}}(s) ds \\ & \leqslant \int_{0}^{t-1} f_{0}(x, t-s) f_{T_{1}}(s) d s+\|f_{T_{1}}\|_{L^\infty[0, T]}\int_{0}^{1} \frac{1}{\sqrt{1-e^{-2s}}}ds \\&
\leqslant\left\|f_{0}(x, t)\right\|_{L^{\infty}(-\infty, 1] \times[1,\infty)}+C'_T=C_T.
\end{aligned}
\end{equation*}
And for \eqref{bound derivative}, without loss of generality, one may assume that $x>0$ and by \eqref{delta x} we have:
\begin{equation*}
\frac{\partial f_{1}}{\partial x}(x, t)=\int_{0}^{t}\left[-x f_{0}(x, t-s)-\int_{x}^{1} \frac{\partial f_{0}}{\partial t}(y, t-s) d y-f_{T_{1}}(t-s)\right] f_{T_{1}}(s) ds=: I_{11}+I_{12}+I_{13}.
\end{equation*}
Using the estimate \eqref{decay at infinity} for $f_0(x,t)$, one have
\begin{equation*}
\left\{\begin{array}{c}{\left|I_{11}\right|\le C \cdot F_{T_{1}}(t) \le C_{T}} \\ \left|I_{13} \right| \le\|f_{T_{1}}\|_{L^\infty[0, T]} F_{T_{1}}(t)\le C_T. \end{array}\right.
\end{equation*}
For the remaining $I_{12}$, formula twice and integration by parts together with the fact that $f_0(\cdot, t)$ is a p.d.f. to have:
\begin{equation*}
\begin{aligned}
\left|I_{12}\right|
=&\left|\int_{x}^{1} \int_{0}^{t} \frac{\partial f_{0}}{\partial t}(y, t-s) f_{T_{1}}(s) d s d y \right|\\
=&\left|\int_{x}^{1} \int_{0}^{t} f_{0}(y, t-s) f_{T_{1}}^{\prime}(s) d s d y\right|\\
=&\left|\int_{0}^{t} \int_{x}^{1} f_{0}(y, t-s) dy f_{T_{1}}^{\prime}(s) d s \right|\\ \le & \int_{0}^{t}\left|f_{T_{1}}^{\prime}(s)\right| ds \\  \le &C_{T}.
\end{aligned}
\end{equation*}
Because of the proof of \eqref{2} in (\romannumeral1), property (\romannumeral2) of Proposition \ref{regularity f0} and representation
\eqref{first iteration}, we know that $\frac{\partial}{\partial x}f_{1}(0^-,t)$, $\frac{\partial}{\partial x}f_{1}(0^+,t)$ and $\frac{\partial}{\partial x}f_{1}(1^-,t)$ are well-defined, and thus by taking the one side limit in \eqref{bound derivative}, we immediately get \eqref{bound derivative} and thus we complete the proof of (\romannumeral2).

Finally for (\romannumeral3), using integral representation \eqref{jump time convolution}, we immediately get \eqref{convolution second jump}. Recalling that $f_1(1,t)=0$, $\forall t>0$, it suffices to prove
\begin{equation}
\lim _{x_1 \rightarrow 0^{+}} \frac{f_{1}(1-x_1, t)}{x_1} =f_{T_{2}}(t) ,\quad \forall t>0.
\label{N1}
\end{equation}
Now note that for all $0<x_1<\frac{1}{2}$,
\begin{equation*}
f_{1}(1-x_1, t)=\int_{0}^{t} f_{0}(1-x_1, t-s) f_{T_1}(s) d s
\end{equation*}
While at the same time by mean value theorem on $f_0$, for all $s\in [0,t]$, $\exists \ \xi_{t-s}(x_1)\in [1-x_1,1] \subset [\frac{1}{2},1]\quad s.t.$
\begin{equation*}
\begin{aligned}\\ \frac{f_0(1-x_1, t-s)}{x_1} &=-\frac{f_{0}(1, t-s)-f_{0}(1-x_1, t-s)}{x_1} \\ &=-\frac{\partial}{\partial x} f_{0}\left(\xi_{t-s}(x_1), t-s\right). \end{aligned}
\end{equation*}
Note for all $0<x_1<\frac{1}{2}$, by (\romannumeral2) of Proposition \ref{regularity f0} for $f_0$:
\begin{equation*}
\frac{\partial}{\partial x} f_{0}(\xi_{t-s}(x_1), t-s) \leqslant\left\|\frac{\partial f_0}{\partial x}\right\|_{L^{\infty}_{\left[\frac{1}{2}, 1\right] \times[0,T]}}
\end{equation*}
and we have
\begin{equation*}
{\lim _{x_1 \rightarrow 0^{+}} \frac{\partial}{\partial x} f_{0}\left(\xi_{t-s} (x_1), t-s\right)=\frac{\partial}{\partial x} f_{0}(1, t-s)}.
\end{equation*}
By the dominated convergence theorem,
\begin{equation*}
{\lim _{x_1 \rightarrow 0^{+}} \frac{f_{1}(1-x_1, t)}{x_1}=-\int_{0}^{t} \frac{\partial}{\partial x} f_{0}(1, t-s) f_{T_1}(s) d s} \\ {=\int_{0}^{t} f_{T_{1}}(t-s) f_{T_{1}}(s) d s} \\ {=f_{T_2}(t)}.
\end{equation*}
and thus
$$
-\frac{\partial f_{1}}{\partial x}(1,t)=f_{T_2}(t).
$$
The proof of Proposition \ref{prop first iteration} is complete.

\end{proof}


Similarly by \eqref{CDF_iterate}, for all $n \ge 1$, we have
\begin{equation}
\begin{aligned}
\label{nth iteration}
f_n(x,0)&=0, \quad \forall x\in (\infty,1),\\
f_{n}(x,t)&=\int^{t}_{0}f_{n-1}(x,t-s)f_{T_{1}}(s)ds,\quad \forall x\in(-\infty,0)\cup(0,1),t>0
\end{aligned}
\end{equation}
and
\begin{equation*}
f_{T_{n+1}}(t)=\int_{0}^{t} f_{T_{n}}(t-s) f_{T_{1}}(s) ds.
\end{equation*}
Hence, the iterative construction is feasible, and we can show

\begin{proposition}
\label{main}
For each $n \ge 1$, let $f_n(x,t)$ be the density function of the measure induced by $F_n(\cdot,t)$ defined in \eqref{iteration_cdf}. For any fixed $T>0$, we have

\begin{itemize}
    \item[(\romannumeral1)]
    $f_n$ is the classic solution of the following PDE:
    \begin{numcases}{}
    \frac{\partial f_{n}}{\partial t}-\frac{\partial}{\partial x}\left(x f_{n}\right)-\frac{\partial^{2}}{\partial x^{2}} f_{n}=0, \quad x \in(-\infty, 0) \cup(0,1),t\in[0,T],\label{5}\\
    f_{n}(0^-, t)=f_{n}\left(0^{+}, t\right), \quad \frac{\partial}{\partial x} f_{n}\left(0^{-}, t\right)-\frac{\partial}{\partial x} f_{n}\left(0^{+}, t\right)=f_{T_n}(t),\quad t\in(0,T].\label{6}\\
    f_{n}(-\infty, t)=0, \quad f_{n}(1, t)=0,\quad t\in[0,T]\label{7}\\
    f_n(x,0)=0,\quad x \in(-\infty, 1)\label{8}
    \end{numcases}
    with
    \beq
    \label{decay n}
    \lim_{x \to -\infty }\partial_xf_n(x, t)=0, \quad t\in [0,T].
    \eeq
    \item[(\romannumeral2)]
    There is a $C_T$ that depends only on $T$ such that
    \beq\label{bound fn}
    |f_n(x,t)|\le C_T,\quad \forall x\in (-\infty,0)\cup(0,1), t\in[0,T],
    \eeq
    \beq\label{bound derivative_fn}
    \left|\frac{\partial}{\partial x}f_{n}(x,t)\right|\le C_T, \quad \forall x\in(-\infty,0)\cup(0,1), t \in [0,T],
    \eeq
    and at the domain boundary
    \beq \label{bdfn}
    \left|\frac{\partial}{\partial x}f_{n}(0^-,t)\right| \le C_T, \quad \left|\frac{\partial}{\partial x}f_{n}(0^+,t)\right| \le C_T,\quad \left|\frac{\partial}{\partial x}f_{n}(1^-,t)\right| \le C_T.
    \eeq
    \item[(\romannumeral3)]
    For $t>0$, $f_n$ is differentiable at $x=1$ and
    \begin{equation}
    \label{T_n pdf}
    -\frac{\partial f_n}{\partial x}(1,t)=f_{T_{n+1}}(t).
    \end{equation}
\end{itemize}
\end{proposition}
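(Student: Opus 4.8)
The plan is to prove Proposition~\ref{main} by induction on $n$. The base case $n=1$ is precisely Proposition~\ref{prop first iteration}, so it suffices to fix $n\ge 2$, assume the conclusion of Proposition~\ref{main} at level $n-1$, and deduce it at level $n$. Everything is driven by the convolution identity \eqref{nth iteration},
\[
f_n(x,t)=\int_0^t f_{n-1}(x,t-s)\,f_{T_1}(s)\,ds,\qquad x\in(-\infty,0)\cup(0,1),\ t>0,\qquad f_n(x,0)=0,
\]
which comes from Lemma~\ref{CDF iteration lemma} and Fubini's theorem. The key structural observation is that the step $f_{n-1}\mapsto f_n$ has exactly the same form as the step $f_0\mapsto f_1$ carried out in Proposition~\ref{prop first iteration}, but in a strictly more favourable setting: by the induction hypothesis $f_{n-1}$ is already a \emph{classical} solution of \eqref{5}--\eqref{8}, with $f_{n-1},\partial_x f_{n-1},\partial_{xx}f_{n-1},\partial_t f_{n-1}$ all continuous and uniformly bounded on $\big((-\infty,0)\cup(0,1)\big)\times[0,T]$ and with vanishing initial datum $f_{n-1}(\cdot,0)=0$. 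In particular, none of the delicate Green's-function estimates of Lemma~\ref{Green function estimation} and Propositions~\ref{regularity f0} and \ref{regularity [0,T]}, which were needed only because $f_0$ carried a $\delta$-type initial datum, have to be revisited.

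For part~(i) I would differentiate under the integral sign. Since $\partial_x f_{n-1}$ and $\partial_{xx}f_{n-1}$ are continuous and bounded by the induction hypothesis and $f_{T_1}\in C[0,T]$, dominated convergence gives $\partial_x f_n(x,t)=\int_0^t\partial_x f_{n-1}(x,t-s)f_{T_1}(s)\,ds$ and similarly for $\partial_{xx}f_n$; for $\partial_t f_n$ the Leibniz rule produces the boundary term $f_{n-1}(x,0)f_{T_1}(t)$, which now vanishes because $f_{n-1}(x,0)=0$, leaving $\partial_t f_n(x,t)=\int_0^t\partial_t f_{n-1}(x,t-s)f_{T_1}(s)\,ds$, so that $\partial_t f_n$ and $\partial_{xx}f_n$ are continuous. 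Integrating the equation for $f_{n-1}$ against $f_{T_1}$ then yields \eqref{5} by linearity; the conditions $f_n(-\infty,t)=f_n(1,t)=0$ and $f_n(0^-,t)=f_n(0^+,t)$ pass through the $s$-integral from the corresponding properties of $f_{n-1}$, the interchange of the one-sided limits $x\to 0^{\pm}$ with the integral being justified by the uniform bounds exactly as in the proof of \eqref{bc 1}. The interface flux is obtained from
\[
\partial_x f_n(0^-,t)-\partial_x f_n(0^+,t)=\int_0^t\big[\partial_x f_{n-1}(0^-,t-s)-\partial_x f_{n-1}(0^+,t-s)\big]f_{T_1}(s)\,ds=\int_0^t f_{T_{n-1}}(t-s)f_{T_1}(s)\,ds=f_{T_n}(t),
\]
where the second equality is \eqref{6} at level $n-1$ and the third is the convolution relation $f_{T_n}=f_{T_{n-1}}\ast f_{T_1}$ following from \eqref{jump time convolution}. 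The initial condition \eqref{8} is immediate, and \eqref{decay n} follows from \eqref{decay n} at level $n-1$ together with dominated convergence.

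For part~(ii), the crude bound $|f_n(x,t)|\le\|f_{n-1}\|_{L^\infty}F_{T_1}(t)$ already yields a finite constant, but to get one \emph{independent of} $n$ — which is what is needed to sum the series \eqref{density decomposition} later — I would instead use the representation \eqref{k-iteration}, $f_n(x,t)=\int_0^t f_0(x,t-s)f_{T_n}(s)\,ds$, combined with the fact that $\int_0^T f_0(x,\sigma)\,d\sigma\le C_T$ uniformly in $x$ (from \eqref{density}) and with $\|f_{T_n}\|_{L^\infty[0,T]}\le\|f_{T_1}\|_{L^\infty[0,T]}$ (from $f_{T_n}=f_{T_{n-1}}\ast f_{T_1}$ and $F_{T_1}(T)\le 1$; compare also Lemma~\ref{lemma_exp}); this gives \eqref{bound fn} with $C_T$ uniform in $n$, in fact decaying geometrically. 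The gradient bound \eqref{bound derivative_fn} is obtained by rewriting $\partial_x f_{n-1}$ through the PDE and integrating by parts in $s$ inside $\int_0^t\partial_x f_{n-1}(x,t-s)f_{T_1}(s)\,ds$, in complete analogy with the estimate of $I_{11}+I_{12}+I_{13}$ in Proposition~\ref{prop first iteration}; taking one-sided limits then gives \eqref{bdfn}. For part~(iii) I would write $-\partial_x f_n(1,t)=\lim_{x\to 1^-}f_n(1-x,t)/x$ and apply the mean value theorem to $f_{n-1}$ as in the proof of \eqref{first flux} — now legitimate since $\partial_x f_{n-1}$ is continuous up to $x=1$ — to get $-\partial_x f_n(1,t)=\int_0^t\big(-\partial_x f_{n-1}(1,t-s)\big)f_{T_1}(s)\,ds=\int_0^t f_{T_n}(t-s)f_{T_1}(s)\,ds=f_{T_{n+1}}(t)$, using \eqref{T_n pdf} at level $n-1$ and $f_{T_{n+1}}=f_{T_n}\ast f_{T_1}$.

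I expect the proof to be largely routine: every interchange of $\partial_t,\partial_x,\partial_{xx}$ or of a one-sided limit $x\to 0^{\pm}$, $x\to 1^-$ with the $s$-integral reduces to dominated convergence, powered by the \emph{uniform} induction-hypothesis bounds rather than by any fresh kernel estimate. The one place that genuinely needs attention is keeping the constant $C_T$ in part~(ii) independent of $n$: a naive iteration of the level-$(n-1)$ bound would let it blow up, which is why one should instead use the representation $f_n=\int_0^t f_0(\cdot,t-s)f_{T_n}(s)\,ds$ together with the contraction $\|f_{T_n}\|_{L^\infty[0,T]}\le\|f_{T_1}\|_{L^\infty[0,T]}$ coming from $F_{T_1}(T)<1$. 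All the remaining smoothness and boundary properties are inherited verbatim through the convolution, which is precisely the mechanism the iteration scheme was designed to exploit.
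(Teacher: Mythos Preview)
Your inductive strategy and your treatment of parts~(i) and~(iii) match the paper's proof essentially line for line: differentiate \eqref{nth iteration} under the integral, invoke the uniform level-$(n-1)$ bounds to justify dominated convergence, read off \eqref{5}, \eqref{7}, \eqref{8}, \eqref{decay n}, and obtain \eqref{6} and \eqref{T_n pdf} from the corresponding identities at level $n-1$ together with $f_{T_n}=f_{T_{n-1}}*f_{T_1}$.

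The one place you diverge from the paper is part~(ii), and your worry there is misplaced. You write that ``a naive iteration of the level-$(n-1)$ bound would let it blow up,'' but in fact it does not: since $f_{T_1}$ is a probability density on $[0,\infty)$ one has $\int_0^t f_{T_1}(s)\,ds=F_{T_1}(t)\le 1$, so from $|f_{n-1}|\le C_T$ and \eqref{nth iteration} one gets directly
\[
|f_n(x,t)|\le C_T\int_0^t f_{T_1}(s)\,ds\le C_T,
\qquad
\Big|\partial_x f_n(x,t)\Big|\le C_T\int_0^t f_{T_1}(s)\,ds\le C_T,
\]
with the \emph{same} constant. This is exactly what the paper does; the elaborate $I_{11}+I_{12}+I_{13}$ splitting from Proposition~\ref{prop first iteration} was needed only at the step $f_0\mapsto f_1$ because $f_0$ and $\partial_x f_0$ are singular at $t=0$, and does not have to be repeated. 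Your detour through the representation $f_n=\int_0^t f_0(\cdot,t-s)f_{T_n}(s)\,ds$ and the contraction $\|f_{T_n}\|_{L^\infty}\le\|f_{T_1}\|_{L^\infty}$ is correct and yields the same uniform bound, but it is unnecessary extra work.
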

\begin{proof}

The proof of Proposition \ref{main} follows from induction. By Proposition \ref{prop first iteration}, we have presented the inductive basis at $n=1$. Now assuming the inductive hypothesis holds up to $n > 1$, To prove (\romannumeral1), by
\begin{equation*}
f_{n+1}(x,t)=\int^t_0 f_n(x,t-s)f_{T_{1}}(s)ds,
\end{equation*}
one may immediately see \eqref{5},\eqref{7},\eqref{8} and \eqref{decay n} hold. For \eqref{6}, note that $f_n(0^-,t)=f_n(0^+,t)$, $\forall t>0$, and that $\left|f_n(x,t)\right|\le C_T$, $\forall x\in (-\infty,0)\cup(0,1), t\le T$. By the dominated convergence theorem, we have
\begin{equation*}
\begin{aligned}
&\lim_{x_1 \to 0^+,x_2 \to 0^+}|f_{n+1}(x_1,t)-f_{n+1}(-x_2,t)|\\
\le& \lim_{x_1 \to 0^+,x_2 \to 0^+} \int^t_0|f_n(x_1,t-s)-f_n(-x_2,t-s)| f_{T_1}(s)ds\\
=&0.
\end{aligned}
\end{equation*}
So we have
\begin{equation*}
f_{n+1}(0^-,t)=f_{n+1} (0^+,t).
\end{equation*}
Similarly,
\begin{equation*}
\begin{aligned}
&\lim_{x_1 \to 0^+,x_2 \to 0^+} \frac{\partial f_{n+1}}{\partial x}(x_1,t)-\frac{\partial f_{n+1}}{\partial x}(-x_2,t) \\
=&\lim_{x_1 \to 0^+,x_2 \to 0^+} \int^t_0 \frac{\partial f_{n}}{\partial x}(x_1,t-s)-\frac{\partial f_{n}}{\partial x}(-x_2,t-s) f_{T_1}(s)ds.
\end{aligned}
\end{equation*}
By the inductive hypothesis
and the dominated convergence theorem, we have
\begin{equation*}
\frac{\partial}{\partial x} f_{n+1}\left(0^{-}, t\right)-\frac{\partial}{\partial x} f_{n+1}\left(0^{+}, t\right)=f_{T_{n+1}}(t).
\end{equation*}
As for (\romannumeral2), to check the additional regularity conditions, note that inductive hypothesis,
\begin{equation*}
0\le f_{n+1}(x,t)=\int^t_0 f_n(x,t-s)f_{T_1}(s)ds\le C_T.
\end{equation*}
And for any $y \in (-\infty,0)\cup(0,1)$ and $t\le T$,
\begin{equation*}
\left|\frac{\partial f_{n+1}}{\partial x}(y,t)\right|\le \int^t_0\left|\frac{\partial f_n}{\partial x}(y,t-s)\right|f_{T_1}(s)ds \le C_T.
\end{equation*}
Using similar arguments as in Proposition \ref{prop first iteration}, we have $\frac{\partial}{\partial x} f_{n+1}\left(0^{-}, t\right)$, $\frac{\partial}{\partial x} f_{n+1}\left(0^{+}, t\right)$ and $\frac{\partial}{\partial x} f_{n+1}\left(1^{-}, t\right)$ are individually bounded  by $C_T$.
Finally for (\romannumeral3), noting that $\left|\frac{\partial f_n}{\partial x}(y, t)\right|\le C_T$ for all $t\le T$, $0<y<1$, the proof of
\begin{equation*}
-\frac{\partial f_n}{\partial x}(1,t)=f_{T_{n+1}}(t),\quad \forall t>0.
\end{equation*}
follows from the same treatment as in Proposition \ref{prop first iteration}.

\end{proof}

Now we can finish the proof of Theorem \ref{thm_forward}.

\begin{proof}
[Proof of Theorem \ref{thm_forward}:]
Based on the previous analysis in Proposition \ref{regularity f0}-\ref{main}, we have shown that for $n \ge 0$, $f_n$ is the density function of the measure induced by $F_n(\cdot,t)$ defined in \eqref{iteration_cdf} as well as the solution to the sub PDE problems \eqref{classical f0} and \eqref{classical fn}. Next, we consider the density function of the stochastic process $X_t$ as in \eqref{process definition} that admits the series representation $f(x, t)=\sum_{n=0}^{+\infty}f_n(x, t)$.

In order to prove that $f(x,t)$ satisfies the properties in Theorem \ref{thm_forward}, we first show that the relevant derivatives of $f(x,t)$ also have the series representations and the series converge uniformly so that we can pass the regularity from $f_n(x,t)$ to $f(x,t)$. Besides, noting that $f_n$ is the solution to the sub PDE problems \eqref{classical f0} and \eqref{classical fn}, and thus we can show in the following $f=\sum_{n=0}^{+\infty}f_n$ satisfies the \eqref{eq_planck}, which is the summation of sub PDE problems PDE \eqref{classical f0} and \eqref{classical fn}.

For any fixed $T>0$, we first show the uniform convergence of the relevant derivatives of $\sum_{n=0}^{+\infty}f_n(x,t)$ on $((-\infty,0) \cup (0,1])\times [0,T]$. By \eqref{nth iteration}, $\forall x_0 \in (-\infty,0) \cup (0,1]$, we have for any $0\le t \le T$ and $n\ge 1$
\beq
\label{iteration bound}
\left|\frac{\partial}{\partial x}f_n(x_0,t)\right|\le \int ^{T}_{0}f_{T_{1}}(s)ds \cdot \max_{t\in [0,T]}\left|\frac{\partial}{\partial x}f_{n-1}(x_0,t)\right| \le \rho_T \max_{t\in [0,T]}\left|\frac{\partial}{\partial x}f_{n-1}(x_0,t)\right|,
\eeq
where
\beq
\label{constant<1}
\rho_T=\int ^{T}_{0}f_{T_{1}}(s)ds=\prob_{0}(T_1\le T) \in (0,\,1)
\eeq
is a constant that depends only on $T$. The proof of \eqref{constant<1} is quite standard in probability and thus we put the whole proof of it in Appendix. With \eqref{iteration bound}, we have
\beq
\label{transformation}
\sum_{n=0}^{+\infty}\max_{t\in[0,T]}\left|\frac{\partial}{\partial x}f_n(x_0, t)\right|\le \sum_{n=0}^{+\infty} \rho^n_T  \max_{t\in[0, T]}\left|\frac{\partial}{\partial x}f_{0}(x_0, t)\right|=\frac{1}{1-\rho_T}\max_{t\in [0,T]}\left|\frac{\partial}{\partial x}f_{0}(x_0, t)\right|,
\eeq
which implies to show the uniform convergence of such series, it suffices to check the regularities of $f_0(x,t)$. In fact, with (\romannumeral2) of Proposition \ref{regularity f0}, we know that for any $\varepsilon_0\in(0,1)$, $f_{0}(x,t)\in C^{2, 1} \left(\left((-\infty, -\varepsilon_0]\cup [\varepsilon_0,1]\right)\times [0,T]\right)$ and thus the last term in \eqref{transformation} has a uniform bound on any compact subset of $(-\infty,0) \cup (0,1]$, i.e., for any compact subset $I$ of $(-\infty,0) \cup (0,1]$,
$$
\sum_{n=0}^{+\infty}\max_{t\in[0,T]}\max_{x\in I}\left|\frac{\partial}{\partial x}f_n(x_0, t)\right|\le \frac{1}{1-\rho_T}\max_{t\in [0,T]}\max_{x\in I}\left|\frac{\partial}{\partial x}f_{0}(x_0, t)\right|< +\infty.
$$
With the same treatment, we know that
\beq
\label{equation series}
\sum_{n=0}^{+\infty}f_n(x,t),\
\sum_{n=0}^{+\infty}\frac{\partial}{\partial t}f_n(x,t),\ \sum_{n=0}^{+\infty}\frac{\partial}{\partial x}( x f_n(x,t) )\  \text{and}\  \sum_{n=0}^{+\infty}\frac{\partial^2}{\partial x^2}f_n(x,t)
\eeq
are inner closed uniformly convergent on $((-\infty,0) \cup (0,1])\times [0,T]$, and thus we can exchange the derivative and the summation in \eqref{equation series}. And by \eqref{transformation}, we have
$$
\max_{t\in[0,T]}\left|\partial_xf(x_0, t)\right|\le\sum_{n=0}^{+\infty}\max_{t\in[0,T]}\left|\frac{\partial}{\partial x}f_n(x_0, t)\right|\le \frac{1}{1-\rho_T}\max_{t\in [0,T]}\left|\frac{\partial}{\partial x}f_{0}(x_0, t)\right|
$$
With the same treatment, we can get the same bounds for the series in \eqref{equation series}, from which we can analyse the regularities of $f(x,t)$ by estimating $f_0(x,t)$.

To check (\romannumeral1), we show that $N(t)=-\frac{\partial}{\partial x}f(1^-,t)$ is well-defined and $N(t)$ has a series representation in terms of the densities of jumping times. In fact, by uniform convergence, it is clear that $\sum_{n=0}^{+\infty}\frac{\partial}{\partial x}f_n(1^-,t)$ uniformly converges on $[0,T]$. In particular,
$$
\frac{\partial f}{\partial x}(1^-,t)=\sum_{n=0}^{\infty}\frac{\partial f_n}{\partial x}(1^-,t).
$$
Then by \eqref{T_1 pdf} and \eqref{T_n pdf}, we also have
\beq
N(t)=-\frac{\partial f}{\partial x}(1^-,t)=\sum_{n=0}^{\infty} f_{T_{n}}(t).
\eeq
Note that $f_{T_{n}}(t) \in C[0,T]$, and thus $N(t)\in C[0,T]$. Hence, (\romannumeral1) is completely proved.

With the uniform convergence of the series representations and the regularities of $f_0(x,t)$ in Proposition \ref{regularity f0}, we can easily show (\romannumeral2),  (\romannumeral3),  (\romannumeral4),  (\romannumeral5) of Theorem \ref{thm_forward}.
By \eqref{decay 0} and \eqref{decay n}, we have
$$
\lim_{x \to -\infty }\partial_xf(x, t)= \sum_{n=0}^{+\infty}\lim_{x \to -\infty }\partial_xf_n(x, t)=0, \quad t\in (0,T],
$$
and thus (\romannumeral5) is valid. Similarly, the uniform convergence together with the continuity of $f_n$, $\partial_{xx}f_n$ and $\partial_tf_n$ on $\left((-\infty,0) \cup (0,1)\right)\times (0,T]$ implies (\romannumeral2) and (\romannumeral3).
To check (\romannumeral4), we aim to show that $f_x(0^-, t)$ and $f_x(0^+, t)$ is well-defined for $t\in (0,T]$. With the similar analysis, we can prove that for fixed $0<t\le T$, $\sum_{n=0}^{\infty}\frac{\partial f_n}{\partial x}(x,t)$ uniformly converge on $[-1,0)$ and $(0,1]$, which together with Lemma \ref{Green function estimation} and the existence of one-side limits given in \eqref{regularity 3} and \eqref{bdfn} derives (\romannumeral4) of Theorem \ref{thm_forward}.

%
%

Finally, to prove (\romannumeral6), that is, the density $f$ satisfies the PDE  problem \eqref{eq_planck}, we need to show that the equation is satisfied as well as all the conditions are met. With uniform convergence, we can sum the equation \eqref{4} from $n=0$ to $+\infty$, and thus for any $(x,t) \in ((-\infty,0) \cup (0,1))\times (0,T]$,
\begin{equation}
\label{sum equation}
\begin{aligned}
&\frac{\partial f}{ \partial t} - \frac{\partial }{\partial x}\left ( x f\right) - \frac{\partial ^2 f}{\partial x^2} \\
=&\frac{\partial }{ \partial t} (\sum_{n=0}^{+\infty}f_n(x, t)) - \frac{\partial }{\partial x}\left ( \sum_{n=0}^{+\infty}xf_n(x, t)\right) - \frac{\partial ^2 }{\partial x^2}(\sum_{n=0}^{+\infty}f_n(x, t))\\
=&\sum_{n=0}^{+\infty} \left(\frac{\partial f_{n}}{\partial t}-\frac{\partial}{\partial x}\left(x f_{n}\right)-\frac{\partial^{2}}{\partial x^{2}} f_{n}\right)\\
=&0.
\end{aligned}
\end{equation}
With the regularities of $f$ proved above, all the initial and boundary conditions in \eqref{eq_planck} are trivially satisfied except that we need to prove the jump condition on $f_x$ at $x=0$. Given any fixed $t>0$, for any $\epsilon>0$, due to the uniform convergence, there is a constant $N<\infty$ such that
\beq  \label{bound decay}
\left|\sum_{n=N+1}^{\infty}\frac{\partial f_n}{\partial x}(x, t)\right| < \epsilon, \quad \forall x \in (-\infty, 0]\cup [0,1],
\eeq
where at $0,1$ the derivatives are understood in the one-sided sense.
Moreover, for the now fixed N, by \eqref{6}, there exists $\delta>0$, such that for all $y<0<x$, $|x|, |y|\le \delta$,
\beq
\label{dist bound}
\left|\frac{\partial f_0}{\partial x}(x, t)-\frac{\partial f_0}{\partial x}(y, t)\right|\le \epsilon
\eeq
and
\beq
\label{dist bound n}
\sum_{n=1}^{N}\left|\frac{\partial f_n}{\partial x}(x, t)-\frac{\partial f_n}{\partial x}(y, t)+f_{T_n}(t) \right|<\epsilon.
\eeq
Combining \eqref{bound decay}-\eqref{dist bound n}, we have
\begin{equation*}
\begin{aligned}
&\left|\frac{\partial f}{\partial x}(x, t)-\frac{\partial f}{\partial x}(y, t)+\sum_{n=1}^{\infty}f_{T_n}(t)\right|\\
=&\left|\sum_{n=0}^{\infty}\left(\frac{\partial f_n}{\partial x}(x, t)-\frac{\partial f_n}{\partial x}(y, t)\right)+\sum_{n=1}^{\infty}f_{T_n}(t)\right|\\
\le&
\left|\frac{\partial f_0}{\partial x}(x, t)-\frac{\partial f_0}{\partial x}(y, t)\right|+ \left|\sum_{n=1}^{N}\left(\frac{\partial f_n}{\partial x}(x, t)-\frac{\partial f_n}{\partial x}(y, t)+f_{T_n}(t)\right) \right| \\
+&\left|\sum_{n=N+1}^{\infty}\frac{\partial f_n}{\partial x}(x, t)\right|+\left|\sum_{n=N+1}^{\infty}\frac{\partial f_n}{\partial x}(y, t)\right|+\left|\sum_{n=N+1}^{\infty}f_{T_n}(t) \right| \le 5\epsilon,
\end{aligned}
\end{equation*}
and thus we conclude that  for $t>0$
$$
\frac{\partial}{\partial x}f(0^-,t)- \frac{\partial}{\partial x}f(0^+,t)=-\frac{\partial}{\partial x}f(1^-,t).
$$
Similarly, we can get for $t>0$
$$
f(0^-,t)=f(0^+,t).
$$
Now that we have thoroughly checked (\romannumeral6) and  hence, the proof of Theorem \ref{thm_forward} is completed.

\end{proof}

With the same steps as in proving Theorem \ref{thm_forward}, we can show Corollary \ref{thm_forward y}. Next, we only focus on proving Theorem \ref{thm_forward nu}. Due to the results for the process $X_t$ as in \eqref{process definition} that starting from $y<1$ are largely parallel to the one starts from $0$ we have studied in details, only a sketch of proof will be given for those parts.

Noting that now $\nu$ is a c.d.f. whose p.d.f. $f_{\text{in}}(x)\in C_c(-\infty,1)$ and that $f_{\text{in}}(x)$ is continuous and compacted supported in $(-\infty, 1-\varepsilon_0]$ for some $\varepsilon_0>0$. Without loss of generality, we assume $f_{\text{in}}(x)$ is supported in $[-C_0, 1-\varepsilon_0]$ for some $C_0>0$. Thus for the fixed $T>0$ we have
\begin{itemize}
    \item[(1)]
    By conditional distribution, we have for any $x\in(-\infty,1]$, $t\in(0,T]$,
    $$
    f^\nu(x,t)=\int_{-\infty}^{1-\varepsilon_0}f^y(x,t)f_{\text{in}}(y)dy.
    $$
    \item[(2)]
    For all $t\in(0,T]$, $x\neq 0 \ \text{or}\ 1$, $f^y(x,t)$ is continuous with respect to $y$.
    \item[(3)]
    All the regularities and convergences in Corollary \ref{thm_forward y} are uniform with respect to $y\in (-\infty,1-\varepsilon_0]$. Actually, for all $\varepsilon_1>0$, $t_0>0$, and any $x\in (-\infty,-\varepsilon_1]\cup[-\varepsilon_1,1)$, $t\in[t_0,T]$, $y\in(-\infty,1-\varepsilon_0]$, we have
    \begin{equation*}
    \begin{aligned}
    \left|f^y(x,t)\right|&\le C_{\varepsilon_0,\varepsilon_1,t_0,T}^{(0)},\quad \left|\partial_xf^y(x,t)\right|\le C_{\varepsilon_0,\varepsilon_1,t_0,T}^{(1)},\\
    \left|\partial_tf^y(x,t)\right|&\le C_{\varepsilon_0,\varepsilon_1,t_0,T}^{(2)},\quad
    \left|\partial_{xx}f^y(x,t)\right|\le C_{\varepsilon_0,\varepsilon_1,t_0,T}^{(3)}.
    \end{aligned}
    \end{equation*}
    Moreover, for all $t\in[t_0,T]$, $y\in(-\infty,1-\varepsilon_0]$ and $x\in[-1,0)\cup(0,1)$
    $$
    \left|\partial_xf^y(x,t)\right|\le C_{\varepsilon_0,t_0,T}.
    $$
    \item[(4)]
    Then we can take the derivative into the integral in \eqref{convolution}, i.e. for $\ell=0,1,2$, $\partial^{\ell}=\partial_{t x}^{\ell}=\partial_{t}^{m} \partial_{x}^{n}, \,\ell=2 m+n$,
    $$
    \partial^\ell f^\nu(x,t)=\int_{-\infty}^{1-\varepsilon_0}\partial^\ell f^y(x,t)\nu(dy), \quad x\in(-\infty,1],\quad t>0,
    $$
    and thus
    $$
    N^\nu(t):=-\partial_xf^\nu(1^-,t)=-\int_{-\infty}^1\partial_x f^y(1^-,t)\nu(dy)= \int_{-\infty}^1N^y(t)\nu(dy).
    $$
    And by the regularities and convergences for $f^y(x,t)$ in Corollary \ref{thm_forward y}, we get the properties (\romannumeral1), (\romannumeral2), (\romannumeral3), (\romannumeral4) and (\romannumeral5) for $f^\nu(x,t)$.
    \item[(5)]
    Finally we check the $L^2$ convergence \eqref{L2 convergence}.
   	We first turn the problem into proving $L^1$ convergence by showing the uniform boundedness of $f^\nu(x,t)$ when $t$ is sufficiently small. In fact, similar to the decomposition in \eqref{density decomposition}, we have
    \beq
    \label{decomposition y}
    f^y(x,t)=\sum_{n=0}^{+\infty}f_n^y(x,t)
    \eeq
    where $f^y_n(x,t)dx=\prob(X^y_t\in dx, n_t=n)$ as in \eqref{iteration_cdf}. With \eqref{decay at infinity}, we have
    \beq
    \label{ou control}
    f_0^y(x,t)\le f_{\text{ou}}^y(x,t)=
    \frac{1}{\sqrt{2\pi(1-e^{-2t})}}\exp{\{\frac{-(x-e^{-t}y)^2}{2(1-e^{-2t})}\}}.
    \eeq
    By the same method in Lemma \ref{CDF iteration lemma}, we get the iteration relationship for any $n\ge 1$
    \beq
    \label{iteration y}
    f_n^y(x,t)=\int_0^tf_{T_1^y}(t-s)f_{n-1}(x,s)ds.
    \eeq
    Using \eqref{density}, we know that for any $t>0$, $f_0(x,t)\le f_{\text{ou}}(x,t)\le \frac{C}{\sqrt{t}}$ and with the similar estimation in Proposition \ref{regularity [0,T]}, we have for any $k\in \BN$, all sufficiently small $t$ and $s\le t$,
    $$
    f_{T^y_1}(t-s)\le C_kt^k,
    $$
    where the constant $C_k$ is independent of all $y\le 1-\varepsilon_0$.
    Thus
    \beq
    \label{iteration calculation}
    f_1^y(x,t)\le C_kt^k\int_0^t\frac{1}{\sqrt{s}}ds\le C_kt^{k+\frac12}.
    \eeq
    Repeat calculations in \eqref{iteration calculation} and with the iteration \eqref{iteration y}, one has for all sufficiently small $t$,
    $$
    f_n^y(x,t)\le (Ct)^n,
    $$
    and thus for all sufficiently small $t$,
    \beq
    \label{summation bound}
    \sum_{n=1}^{+\infty}f_n^y(x,t)\le \frac{Ct}{1-Ct}\le C.
    \eeq
    Combining \eqref{decomposition y}, \eqref{ou control} and \eqref{summation bound}, we have
    \begin{equation*}
    \begin{aligned}
    f^\nu(x,t)\le& \int_{-\infty}^{1-\varepsilon_0}\left[f^y_{\text{ou}}(x,t)
    +C\right]f_{\text{in}}(y)dy\\
    \le&C+\|f_{\text{in}}(y)\|_{L^\infty(-\infty,1-\varepsilon_0]}
    \int_{-\infty}^{1-\varepsilon_0}f^y_{\text{ou}}(x,t)dy.
    \end{aligned}
    \end{equation*}
    Noting that by \eqref{ou control} $\int_{-\infty}^{1-\varepsilon_0}
    f^y_{\text{ou}}(x,t)dy$ is uniformly bounded for any $x$ and sufficiently small $t$, and so does $f^\nu$. Noting that both $f_{\text{in}}(x)$ and $f^\nu(x,t)$ are uniformly bounded for all sufficiently small $t$, thus to prove \eqref{L2 convergence}, it suffices to prove
    \beq
    \label{L1 convergence}
    \lim_{t\to 0^+} \int_{-\infty}^{+\infty}|f^\nu(x,t)-f_{\text{in}}(x)|dx=0.
    \eeq
    To get \eqref{L1 convergence}, for a suitable constant $M_0$ whose value will be specified in the following, we have
    \beq
    \label{L1 decomposition}
    \int_{-\infty}^{+\infty}|f^\nu(x,t)-f_{\text{in}}(x)|dx=
    \left(\int_{-\infty}^{-M_0}+\int_{-M_0}^1\right)|f^\nu(x,t)-f_{\text{in}}(x)|dx
    =:P_1+P_2.
    \eeq
    First to bound $P_1$, we have
    \begin{lemma}
    \label{decay y lemma}
    Now consider the process $X_t$ as in \eqref{process definition} that starts from $y$. For any $\varepsilon>0$, there exists $t_0>0$ and $M_0<\infty$ such that for any $t\in[0,t_0]$ and any $y\in \emph{supp}(f_{\emph{in}})=[-C_0,1-\varepsilon_0]$,
    \beq
    \prob^y(X_t\le -M_0)\le \varepsilon.
    \eeq
    \end{lemma}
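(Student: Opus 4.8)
The plan is to split the event $\{X_t\le -M_0\}$ according to whether any firing has occurred by time $t$, i.e.\ according to $n_t=0$ or $n_t\ge 1$, and to bound the two pieces by elementary Gaussian and martingale estimates that are uniform over the compact parameter set $y\in[-C_0,1-\varepsilon_0]$. Throughout, $\Phi$ denotes the standard normal distribution function, and the case $t=0$ is trivial since then $X_0=y\ge -C_0>-M_0$ as soon as $M_0>C_0$, so only $t\in(0,t_0]$ needs attention.

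First I would treat the no-jump part. On $\{n_t=0\}$ we have $T_1>t$ and $X_t=Y^{(1)}_t$, where $Y^{(1)}$ is an O--U process started at $y$, so by \eqref{normal} its law is $N(e^{-t}y,\,1-e^{-2t})$. Since $y\ge -C_0$ forces $e^{-t}y\ge -C_0$, and since $0<\sqrt{1-e^{-2t}}\le 1$ for $t>0$, for any $M_0>C_0$ the standardized threshold satisfies $(-M_0-e^{-t}y)/\sqrt{1-e^{-2t}}\le -(M_0-C_0)$, whence
\[
\prob^y(X_t\le -M_0,\ n_t=0)\ \le\ \prob^y\big(Y^{(1)}_t\le -M_0\big)\ \le\ \Phi\big(-(M_0-C_0)\big),
\]
a bound independent of $t$ and of $y$. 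I would therefore fix $M_0>C_0$ once and for all, large enough that $\Phi(-(M_0-C_0))<\varepsilon/2$.

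For the complementary part I would use the crude bound $\prob^y(X_t\le -M_0,\ n_t\ge 1)\le \prob^y(n_t\ge 1)=\prob^y(T_1\le t)$, where $T_1=\tau_1$ is the first time an O--U process started at $y$ reaches $1$. Writing \eqref{OU_sol} in the form $Y_u=e^{-u}(y+N_u)$ with $N_u:=\sqrt 2\int_0^u e^s\,dB_s$ a continuous martingale (the same one appearing in \eqref{eq_max_Y1}, whose law does not depend on $y$), reaching $1$ at some $u\le t$ forces $N_u=e^u-y\ge 1-y\ge \varepsilon_0$, using $y\le 1-\varepsilon_0$; hence $\sup_{u\le t}N_u\ge\varepsilon_0$. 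Since $\e[N_t^2]=e^{2t}-1$, Doob's maximal inequality gives
\[
\prob^y(T_1\le t)\ \le\ \prob\Big(\sup_{u\le t}|N_u|\ge \varepsilon_0\Big)\ \le\ \frac{e^{2t}-1}{\varepsilon_0^2}\ \le\ \frac{e^{2t_0}-1}{\varepsilon_0^2},
\]
uniformly in $y$. Choosing $t_0>0$ small enough that $(e^{2t_0}-1)/\varepsilon_0^2<\varepsilon/2$ and adding the two estimates yields $\prob^y(X_t\le -M_0)<\varepsilon$ for all $t\in[0,t_0]$ and all $y\in[-C_0,1-\varepsilon_0]$, as required.

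The argument is essentially routine; the only point that needs a little care is to keep both estimates genuinely uniform in $y$ over $\text{supp}(f_{\text{in}})$, which is precisely where the compactness of the support enters — the lower bound $y\ge -C_0$ in the Gaussian estimate and the gap $y\le 1-\varepsilon_0$ in the hitting-time estimate. I do not anticipate any real obstacle.
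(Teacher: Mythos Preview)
Your proof is correct and uses essentially the same decomposition as the paper: both split $\{X_t\le -M_0\}$ into a piece controlled by the Gaussian law of $Y^{(1)}_t$ and a piece controlled by $\prob^y(T_1\le t)$, and both handle the Gaussian piece identically.

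The only genuine difference is in how $\prob^y(T_1\le t)$ is bounded. The paper invokes the polynomial decay $f_{T_1^y}(s)\le C_k s^k$ established earlier in Proposition~\ref{regularity [0,T]} (the $M(s)$ analysis) and integrates it. You instead give a self-contained martingale argument: writing $Y_u=e^{-u}(y+N_u)$ with $N_u=\sqrt 2\int_0^u e^s\,dB_s$, you observe that hitting $1$ by time $t$ forces $\sup_{u\le t}N_u\ge\varepsilon_0$ and then apply Doob's maximal inequality. This is more elementary and avoids any dependence on the earlier PDE estimates; it is in fact the same device the paper uses in the proof of \eqref{eq_initial_delta} and in the Appendix, just not here. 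Both routes give a bound uniform in $y\in[-C_0,1-\varepsilon_0]$, which is the only point of substance, and you correctly identify where compactness of $\mathrm{supp}(f_{\mathrm{in}})$ enters on each side.
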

    \begin{proof}
    Note that according to the construction of the process $X_t$ as in \eqref{process definition} that starts from $y$, we have
    $$
    \{X_t>-M_0\}\supset\{Y_t^{(1)}>-M_0\}\cap\{T_1>t\}
    $$
    which immediately implies
    \beq
    \label{decay y}
    \prob^y(X_t\le -M_0)\le \prob^y(Y_t^{(1)}\le-M_0)+\prob^y(T_1>t):=Q_1+Q_2.
    \eeq
    For $Q_2$ when $t\le t_0$,
    $$
    \prob^y(T_1\le t)=\int_0^tf_{T^y_1}(s)ds\le C_k\int_0^ts^kds.
    $$
    So let $k=1$ and $t_0=\sqrt{\frac{\varepsilon}{C_1}}$, we have for all $t\le t_0$,
    \beq
    \prob^y(T_1\le t)\le C_1\int_0^tsds\le \frac12 \varepsilon.
    \eeq
    And for $Q_1$, noting that $Y_t^{(1)}$ is Gaussian, we can choose $M_0$ large enough to control $Q_1$ and then complete the proof.

    \end{proof}
    \begin{remark}
    Without loss of generality, we choose the constant $M_0$ in Lemma \ref{decay y lemma} larger than $C_0$.
    \end{remark}
    Lemma \ref{decay y lemma} immediately implies that
    \beq
    \label{decay y 2}
    F^\nu(-M_0,t)=\prob^\nu(X_t\le -M_0)=\int_{-C_0}^{1-\varepsilon_0}\prob^y(X_t\le -M_0)f_{\text{in}}(y)dy\le \varepsilon.
    \eeq
    And for any $\varepsilon>0$, $\exists t_0>0$ and $M_0< \infty$ such that for all $t<t_0$,
    \beq
    \label{L1 convergence 1}
    P_1=\int_{-\infty}^{-M_0}f^\nu(x,t)dx=\prob^\nu(X_t\le -M_0)<\varepsilon.
    \eeq
    To estimate $P_2$ in \eqref{L1 decomposition}, we show in the following
    \begin{lemma}
    \label{density distance}
    For any $\varepsilon>0$, there is a $t_1>0$ such that for any $t\in(0,t_1]$ and $x\in \BR$,
    $$
    f^\nu(x,t)\le f_{\emph{in}}(x)+\varepsilon.
    $$
    \end{lemma}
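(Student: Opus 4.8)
The plan is to combine the series decomposition \eqref{decomposition y}, \eqref{iteration y} with the bounds \eqref{ou control} and \eqref{summation bound} already established, and to reduce the statement to a mollification estimate for $f_{\text{in}}$. Writing
\[
f^\nu(x,t)=\int_{-\infty}^{1-\varepsilon_0}f_0^y(x,t)f_{\text{in}}(y)\,dy+\int_{-\infty}^{1-\varepsilon_0}\Big(\sum_{n\ge1}f_n^y(x,t)\Big)f_{\text{in}}(y)\,dy,
\]
it suffices to prove, for all sufficiently small $t$ and all $x\in\BR$, the two bounds
\[
\int_{-\infty}^{1-\varepsilon_0}f_0^y(x,t)f_{\text{in}}(y)\,dy\le f_{\text{in}}(x)+\tfrac{\varepsilon}{2},\qquad
\int_{-\infty}^{1-\varepsilon_0}\Big(\sum_{n\ge1}f_n^y(x,t)\Big)f_{\text{in}}(y)\,dy\le\tfrac{\varepsilon}{2}.
\]
For the second bound I would invoke \eqref{summation bound}, which gives $\sum_{n\ge1}f_n^y(x,t)\le\frac{Ct}{1-Ct}$ with $C$ independent of $x$ and of $y\le1-\varepsilon_0$; integrating against the probability density $f_{\text{in}}$ yields the same bound $\frac{Ct}{1-Ct}$, which is $\le\varepsilon/2$ once $t$ is small.

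For the first bound I would use \eqref{ou control}, namely $f_0^y(x,t)\le f_{\text{ou}}^y(x,t)$, together with the fact that $f_{\text{in}}$ vanishes outside $[-C_0,1-\varepsilon_0]$, to reduce matters to estimating $\int_{\BR}f_{\text{ou}}^y(x,t)f_{\text{in}}(y)\,dy$. With $\sigma(t)^2:=1-e^{-2t}$ and the Gaussian kernel $G_\sigma(z):=(2\pi\sigma^2)^{-1/2}e^{-z^2/(2\sigma^2)}$, the substitution $u=e^{-t}y$ turns this into
\[
\int_{\BR}f_{\text{ou}}^y(x,t)f_{\text{in}}(y)\,dy=e^{t}\,(G_{\sigma(t)}\ast g_t)(x),\qquad g_t(u):=f_{\text{in}}(e^{t}u).
\]
Since $f_{\text{in}}$ is continuous and compactly supported, it is bounded and uniformly continuous, and the supports $e^{-t}\,\mathrm{supp}(f_{\text{in}})$ remain in a fixed compact set for $t$ small, so $\|g_t-f_{\text{in}}\|_{L^\infty}\to0$ as $t\to0^+$. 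Then from
\[
\big|e^{t}(G_{\sigma}\ast g_t)(x)-f_{\text{in}}(x)\big|\le(e^{t}-1)\|f_{\text{in}}\|_{L^\infty}+\|g_t-f_{\text{in}}\|_{L^\infty}+\big|(G_{\sigma}\ast f_{\text{in}})(x)-f_{\text{in}}(x)\big|,
\]
the first two terms tend to $0$, and the third tends to $0$ \emph{uniformly in $x$} by the standard approximate-identity estimate for a bounded uniformly continuous function. Hence $e^{t}(G_{\sigma(t)}\ast g_t)(x)\le f_{\text{in}}(x)+\varepsilon/2$ for all $x$ once $t$ is small enough, which is the first bound. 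Choosing $t_1$ below the thresholds from both steps completes the argument.

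The only genuinely delicate point is the uniform-in-$x$ convergence $G_{\sigma(t)}\ast g_t\to f_{\text{in}}$, and this is exactly where the hypotheses on $f_{\text{in}}$ enter: uniform continuity makes the mollification error uniform in $x$, and compact support guarantees that the Ornstein--Uhlenbeck rescaling $y\mapsto e^{-t}y$ perturbs $f_{\text{in}}$ only by a quantity that is uniformly small. Everything else is routine bookkeeping with the estimates from the earlier propositions.
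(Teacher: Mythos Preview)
Your proof is correct and follows essentially the same strategy as the paper: split $f^\nu$ into the $n=0$ piece and the tail $\sum_{n\ge1}$, control the tail by \eqref{summation bound}, and bound the $n=0$ piece by the free O--U density so that the problem reduces to a Gaussian mollification of $f_{\text{in}}$. The paper carries out the mollification estimate by hand, splitting into the cases $x\in[-C_0-1,1]$ and $x<-C_0-1$ and using the uniform continuity of $f_{\text{in}}$ on the former while exploiting $|x-e^{-t}y|\ge1$ on the latter; your substitution $u=e^{-t}y$ recasts $\int f_{\text{ou}}^y(x,t)f_{\text{in}}(y)\,dy$ as $e^t(G_{\sigma(t)}\ast g_t)(x)$ and then appeals to the standard approximate-identity theorem for bounded uniformly continuous functions, which handles all $x$ at once and avoids the case split. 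Both routes rely on exactly the same hypotheses (continuity and compact support of $f_{\text{in}}$), so the difference is purely one of packaging rather than substance.
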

    \begin{proof}
    Noing that when $x>1$, $f^\nu(x,t)=f_{\text{in}}(x)=0$, thus we only need to focus on $x\in(-\infty,1]$. By \eqref{decomposition y}, \eqref{ou control} and \eqref{summation bound}, we already have
    \beq
    f^\nu(x,t)\le \int_{-\infty}^{+\infty}f^y_{\text{ou}}(x,t)f_{\text{in}}(y)dy+
    \frac{Ct}{1-Ct}
    \eeq
    and $\frac{Ct}{1-Ct}\to 0$ as $t\to 0^+$. Thus we only need to bound $\int_{-\infty}^{+\infty}f^y_{\text{ou}}(x,t)f_{\text{in}}(y)dy$. To do this we will separate the case when $x\in[-C_0-1,1]$ and $x\in(-\infty, -C_0-1)$.
    \begin{itemize}
    \item[(\romannumeral1)]
    When x belongs to the compact set $[-C_0-1,1]$, by \eqref{ou control} we have
    \beq
    \label{normal transformation}
    f^y_{\text{ou}}(x,t)=e^t\frac{1}{\sqrt{2\pi(1-e^{-2t})e^{2t}}}
    \exp\{-\frac{(y-xe^t)^2}{2(1-e^{-2t})e^{2t}}\}
    \eeq
    which equals to the multiply of $e^t$ and the p.d.f. of the normal distribution $N(xe^t,(1-e^{-2t})e^{2t})$. Noting that $f_{\text{in}}(y)$ is uniformly continuous, thus for any $\varepsilon>0$, there $\exists \delta>0$, s.t. for all $|x_1-x_2|\le \delta$, we have $|f_{\text{in}}(x_1)-f_{\text{in}}(x_2)|\le \varepsilon$. And there $\exists t_2>0$ s.t. for all $t<t_2$ and $x\in[-C_0-1, 1]$, $|x-e^tx|<\frac{\delta}{2}$.
    Moreover, for the fixed $\delta$ above, there $\exists t_3>0$ such that for all $t\in (0,t_3)$
    \beq
    \label{Normal bound}
    \prob(|N(0,1)|\ge \frac{\delta}{2\sqrt{(1-e^{-2t})e^{2t}}})\le \frac{\varepsilon}{\|f_{\text{in}}\|_{L^\infty}},
    \eeq
    where $N(0,1)$ stands for the standard normal distribution. Thus for $t_1=t_2\wedge t_3$,
    \beq
    \begin{aligned}
    \int_{-\infty}^{+\infty}f^y_{\text{ou}}(x,t)f_{\text{in}}(y)dy=
    \left(\int_{xe^t-\frac{\delta}{2}}^{xe^t+\frac{\delta}{2}}+
    \int_{\BR\setminus[xe^t-\frac{\delta}{2},xe^t+\frac{\delta}{2}]}\right)
    f^y_{\text{ou}}(x,t)f_{\text{in}}(y)dy=:K_1+K_2
    \end{aligned}
    \eeq
    For $K_1$, we have by \eqref{normal transformation}
    \beq
    \label{K1 bound}
    K_1\le \max_{y\in[x-\delta,x+\delta]}f_{\text{in}}(y)\cdot e^t
    \le \|f_{\text{in}}\|_{L^\infty}(e^t-1)+\max_{y\in[x-\delta,x+\delta]}
    f_{\text{in}}(y)\le f_{\text{in}}(x)+\varepsilon.
    \eeq
    And for $K_2$, we have by \eqref{Normal bound}
    \beq
    \label{K2 bound}
    K_2\le \|f_{\text{in}}\|_{L^\infty}\int_{\BR\setminus[xe^t-\frac{\delta}{2},xe^t+\frac{\delta}{2}]}f^y_{\text{ou}}(x,t)dy
    \le e^t\frac{\varepsilon}{\|f_{\text{in}}\|_{L^\infty}}=e^t\cdot \varepsilon.
    \eeq
    Combing \eqref{K1 bound} and \eqref{K2 bound}, the proof of case (\romannumeral1) is complete.
    \item[(\romannumeral2)]
    Note that $f_{\text{in}}(x)=0$ on $x\in(-\infty, -C_0-1)$ and $f^\nu(x,t)=0$ on $x\ge 1$. We only need to prove that $\forall \varepsilon>0$, $\exists t_1>0$ s.t. $\forall t\in(0,t_1]$ and any $x<-C_0-1$,
    \beq
    \int_{-\infty}^{+\infty}f^y_{\text{ou}}(x,t)f_{\text{in}}(y)dy<\varepsilon.
    \eeq
    By \eqref{ou control} and noting that for any $x<-C_0-1$ and $y\in [-C_0,1]$, we have $|x-e^{-t}y|\ge 1$ and thus
    \begin{equation*}
    \begin{aligned}
    \int_{-\infty}^{+\infty}f^y_{\text{ou}}(x,t)f_{\text{in}}(y)dy\le& (C_0+1)\|f_{\text{in}}\|_{L^\infty}\frac{1}{\sqrt{2\pi(1-e^{-2t})}}
    \exp\left(-\frac{1}{2(1-e^{-2t})}\right)\\
    \le&(C_0+1)\|f_{\text{in}}\|_{L^\infty}\frac{u}{\sqrt{2\pi}}\exp\left(-\frac{u^2}{2}\right),
    \end{aligned}
    \end{equation*}
    where $u:=(1-e^{-2t})^{-\frac12}$. Thus we know $\int_{-\infty}^{+\infty}f^y_{\text{ou}}(x,t)f_{\text{in}}(y)dy\to 0$ as $t\to 0^+$ and the proof of Lemma \ref{density distance} is complete.
    \end{itemize}

    \end{proof}
    With Lemma \ref{density distance}, now we conclude the proof of the \eqref{L2 convergence}. Now for the fixed $M_0$ in Lemma \ref{decay y lemma},  there exists $t_2\ge 0$ s.t. for all $t\in (0,t_2]$ and $x\in \BR$
    $$
    f^\nu(x,t)\le f_{\text{in}}(x)+\frac{\varepsilon}{M_0+1}
    $$
    Noting that $|a-b|\le b-a+2\max\{a-b,0\}$, we have
    \beq
    \begin{aligned}
    \label{L1 convergence 2}
    P_2\le&\int_{-M_0}^1\left[f_{\text{in}}(x) -f^\nu(x,t)+\frac{2\varepsilon}{M_0+1}\right]dx\\
    =&\int_{-M_0}^1f_{\text{in}}(x)dx-\int_{-M_0}^1f^\nu(x,t)dx+2\varepsilon\\
    \le& 3\varepsilon.
    \end{aligned}
    \eeq
    Combining \eqref{L1 convergence 1} and \eqref{L1 convergence 2}, we get \eqref{L2 convergence}. Then the proof of Theorem \ref{thm_forward nu} is complete.

\end{itemize}

\subsection{Weak Solution}
\

\noindent

In this section, we show that the density of $X_t$, which we denote by $f(x,t)$ and $N(t)=\sum_{n=1}^{+\infty} F'_{T_n}(t)$ are the weak solution of the PDE problem \eqref{eq_planck}.
We adopt the definition of the weak solution of  \eqref{eq_planck} as in \cite{caceres2011analysis} and the main theorem in this section is as follows:
\begin{theorem}
Let $f^\nu(x,t)$ be the p.d.f of the process $X_t$ as in \eqref{process definition} that starts from p.d.f. $f_{\emph{in}}(x)\in C_c(-\infty,1)$ and $N^\nu(t):=\sum_{n=1}^{+\infty} F'_{T_n}(t)$. The pair $(f, N)$  is a weak solution of \eqref{eq_planck} in the following sense: for any test function $\phi(x, t) \in C^{\infty}\left(\left(-\infty, 1\right] \times[0, T]\right)$ such that $\frac{\partial^{2} \phi}{\partial x^{2}}, x\frac{\partial \phi}{\partial x} \in$
$L^{\infty}\left(\left(-\infty, 1\right] \times[0, T]\right),$ we have
    \beq
	\begin{aligned}
	&\int_0^T \int_{-\infty}^1 \left(\frac{\partial }{\partial t} \phi-x\frac{\partial }{\partial x} \phi+\frac{\partial^2 }{\partial x^2} \phi \right)f^\nu(x,t)dxdt\\
	=&\int_0^T\left(\phi(1, t)-\phi(0, t)\right)N^\nu(t)dt -\int_{-\infty}^1\phi(x,0)f_{\emph{in}}(x)dx+\int_{-\infty}^1 \phi(x, T)f^\nu(x, T)dx
	\end{aligned}
	\eeq
\end{theorem}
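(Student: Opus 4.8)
The plan is to deduce the weak identity directly from the fact --- already proved in Theorem \ref{thm_forward nu} --- that $f^\nu$ is a classical solution of \eqref{eq_planck}. Concretely I will use that $f^\nu$ is $C^{2,1}$ on $\big((-\infty,0)\cup(0,1)\big)\times(0,T]$ and solves $\partial_t f^\nu=\partial_x(xf^\nu)+\partial_{xx}f^\nu$ pointwise there, is continuous across $x=0$ with the flux jump $\partial_x f^\nu(0^-,t)-\partial_x f^\nu(0^+,t)=N^\nu(t)$, satisfies $f^\nu(1^-,t)=0$ with $-\partial_x f^\nu(1^-,t)=N^\nu(t)$, has $\partial_x f^\nu(x,t)\to 0$ and $f^\nu(x,t)\to 0$ as $x\to-\infty$, has $N^\nu\in C[0,T]$, and attains $f_{\text{in}}$ in $L^2$ as $t\to 0^+$. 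I will also use the small-time Gaussian domination of $f^\nu$ from the proof of Theorem \ref{thm_forward nu} (cf. \eqref{ou control}--\eqref{summation bound}) and the tail bound \eqref{decay y 2}; together these supply the decay and integrability needed to integrate by parts against a test function $\phi$ that is only bounded with $x\partial_x\phi,\partial_{xx}\phi\in L^\infty$.

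The main computation is as follows. Fix $\phi$ as in the statement and, for $0<\epsilon<T$ and $M>C_0$, multiply the equation by $\phi$ and integrate over $[\epsilon,T]\times\big([-M,0)\cup(0,1)\big)$. One integration by parts in $t$ produces the time-slice terms $\int\phi(x,T)f^\nu(x,T)\,dx-\int\phi(x,\epsilon)f^\nu(x,\epsilon)\,dx$ and the bulk term $-\iint f^\nu\partial_t\phi$. One integration by parts in $x$ in $\int\phi\,\partial_x(xf^\nu)$ on each of $[-M,0)$ and $(0,1)$ gives $-\iint xf^\nu\partial_x\phi$, the boundary contributions vanishing at $x=0$ by the factor $x$, at $x=1$ since $f^\nu(1^-,t)=0$, and at $x=-M$ since $Mf^\nu(-M,t)\to 0$. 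Two integrations by parts in $x$ in $\int\phi\,\partial_{xx}f^\nu$ on each subinterval give $\iint\partial_{xx}\phi\,f^\nu$ plus boundary terms at $x=-M$ (which tend to $0$ as $M\to\infty$ using $\partial_x f^\nu\to 0$, the Gaussian decay of $f^\nu$, and $\partial_x\phi=O(1/|x|)$ from $x\partial_x\phi\in L^\infty$), plus the terms at $x=0^{\pm}$ and $x=1^-$. The $x=0^{\pm}$ contributions combine as $\partial_x\phi(0,t)\big(f^\nu(0^+,t)-f^\nu(0^-,t)\big)+\phi(0,t)\big(\partial_x f^\nu(0^-,t)-\partial_x f^\nu(0^+,t)\big)$, which by continuity and the flux jump equals $\phi(0,t)N^\nu(t)$; the $x=1^-$ contribution is $\phi(1,t)\partial_x f^\nu(1^-,t)=-\phi(1,t)N^\nu(t)$. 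Letting $M\to\infty$ in the bulk integrals (dominated convergence against the Gaussian bound) yields
\beq
\int_\epsilon^T\!\!\int_{-\infty}^1\!\big(\partial_t\phi-x\partial_x\phi+\partial_{xx}\phi\big)f^\nu\,dx\,dt=\int_\epsilon^T\!\big(\phi(1,t)-\phi(0,t)\big)N^\nu(t)\,dt+\int_{-\infty}^1\!\phi(x,T)f^\nu(x,T)\,dx-\int_{-\infty}^1\!\phi(x,\epsilon)f^\nu(x,\epsilon)\,dx .
\eeq

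Finally I let $\epsilon\to 0^+$. Since $N^\nu\in C[0,T]$ and the bulk integrands are dominated (again by the small-time Gaussian bound and the uniform estimates on $\phi$), the $\int_\epsilon^T$ integrals converge to the corresponding $\int_0^T$ integrals. For the time slice at $t=\epsilon$ I split $\int_{-\infty}^1=\int_{-M_0}^1+\int_{-\infty}^{-M_0}$ with $M_0>C_0$ as in Lemma \ref{decay y lemma}: the first integral converges to $\int_{-M_0}^1\phi(x,0)f_{\text{in}}(x)\,dx$ by the continuity of $f^\nu$ up to $t=0$ and its uniform boundedness for small $t$ on compacts, while the second is $\le\|\phi\|_\infty\,\prob^\nu(X_\epsilon\le-M_0)\le\|\phi\|_\infty\varepsilon$ by \eqref{decay y 2} and $\int_{-\infty}^{-M_0}|\phi(x,0)f_{\text{in}}(x)|\,dx=0$ because $\mathrm{supp}\,f_{\text{in}}\subset[-C_0,1-\varepsilon_0]$; hence $\int\phi(x,\epsilon)f^\nu(x,\epsilon)\,dx\to\int\phi(x,0)f_{\text{in}}(x)\,dx$. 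Rearranging the resulting identity gives exactly the asserted weak formulation, with $N^\nu=\sum_{n\ge1}F'_{T_n}$ identified with $-\partial_x f^\nu(1^-,\cdot)$ via Theorem \ref{thm_forward nu}(i) and the series $N(t)=\sum_n f_{T_n}(t)$ obtained in the proof of Theorem \ref{thm_forward}.

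I expect the delicate points to be the two limit passages rather than the identity itself: controlling the $x=-M$ boundary terms and the spatial tails as $M\to\infty$ against a merely bounded test function (which is exactly where the Gaussian decay of $f^\nu$ and of $\partial_x f^\nu$ is indispensable), and the $\epsilon\to 0^+$ passage, where only $L^2$ attainment of the initial data is guaranteed so the convergence of $\int\phi(x,\epsilon)f^\nu(x,\epsilon)\,dx$ must be read off from continuity up to $t=0$ together with the uniform tail smallness of Lemma \ref{decay y lemma}. The bookkeeping of the $x=0$ and $x=1$ boundary terms is the conceptual core --- it is where the flux-shift structure of \eqref{eq_planck} is encoded into the source $\big(\phi(1,t)-\phi(0,t)\big)N^\nu(t)$ --- but it is a routine computation once the integrations by parts on the truncated domain are set up.
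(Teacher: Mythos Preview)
Your approach is correct and is exactly the route the paper takes: the paper's own proof says only that one ``simply multiplies the equation by the test function $\phi$ and carries out the integration by parts in space and in time respectively,'' and explicitly omits the details. Your write-up fills in precisely those details --- the truncation in $x$ and $t$, the bookkeeping of the $x=0^\pm$ and $x=1^-$ boundary terms that produces $(\phi(1,t)-\phi(0,t))N^\nu(t)$, and the limit passages $M\to\infty$ and $\epsilon\to0^+$ using the Gaussian decay and the continuity/tail estimates from Theorem~\ref{thm_forward nu} and Lemma~\ref{decay y lemma}.
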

The convergence of the series $\sum_{n=1}^{+\infty} F'_{T_n}(t)$ relies on the proof of Theorem \ref{thm_forward nu} with which we have already known that $f^\nu(x,t)$ is a solution to the PDE problem \eqref{eq_planck}. To prove $(f^\nu, N^\nu)$ is also a weak solution of  \eqref{eq_planck}, one simply multiplies the equation by the test function $\phi$ and carries out the integration by parts in space and in time respectively. Since the calculations is rather straightforward, we choose to omit the details in this work but we remark that the weak-strong uniqueness is still an open problem for such a Fokker-Planck equation with a flux-shift structure and we will  continue research along this line in the future.

\section*{Acknowledgement}
J.-G. Liu is partially supported by NSF grant DMS-2106988. Y. Zhang is supported by NSFC Tianyuan Fund for Mathematics 12026606 and the National Key R\&D Program of China, Project Number 2020YFA0712902. Z. Zhou is supported by the National Key R\&D Program of China, Project Number 2020YFA0712000 and NSFC grant No. 11801016, No. 12031013. Z. Zhou is also partially supported by Beijing Academy of Artificial Intelligence (BAAI).

\section*{Appendix}

Now we shall go back to show \eqref{constant<1}. In the following, we let $X_t$ as in \eqref{OU_sol} denote an O-U process starting from $0$ and define the stopping time $T_1$ be the first time that $X_t$ hits $1$, i.e., $T_1=\inf\{t\ge 0, X_t=1\}$. Now it suffices to prove that for all fixed $T\in (0, +\infty)$,
\beq
\label{constant<1 equivalent}
\prob(T_1>T)>0.
\eeq
In order to show \eqref{constant<1 equivalent}, we show the probability of an event included in $\{T_1>T\}$ is positive. Actually, we construct a sequence of stopping time and use the strong Markov property to decompose the process $X_t$ such that each time $|X_t|>1$, it escape from $-1$. By showing the product of the probability of an event sequence is positive, we complete the proof. Now we show a useful lemma.
\begin{lemma}
For the O-U process $X_t$ defined above, define a stopping time $\tau_1=\inf\{t\ge 0, |X_t|=1\}$, then
\begin{numcases}{}
\prob(\tau_1<+\infty)=1 \label{tip1}, \\
\prob(\tau_1>\frac{1}{16},X_{\tau_1}=-1)=\prob(\tau_1>\frac{1}{16},X_{\tau_1}=1)>0. \label{tip2}
\end{numcases}
\end{lemma}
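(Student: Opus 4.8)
The plan is to treat \eqref{tip1} and \eqref{tip2} separately, using only the Gaussian structure of the O-U process \eqref{OU_sol}, its reflection symmetry, the scale function of its generator, and Doob's maximal inequality. For \eqref{tip1} I would argue by a pigeonhole over unit time steps. Under $\prob^x$ the variable $X_1$ is $N(e^{-1}x,\,1-e^{-2})$, so for every $x\in[-1,1]$,
\beq
\prob^x(X_1>1)\ \ge\ \prob\!\left(Z>\frac{1+e^{-1}}{\sqrt{1-e^{-2}}}\right)=:c_0>0,\qquad Z\ \text{standard normal};
\eeq
and since paths are continuous with $X_0=x\le1$, the event $\{X_1>1\}$ forces a crossing of the level $1$ in $[0,1]$, i.e.\ $\{X_1>1\}\subseteq\{\tau_1\le1\}$. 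Hence $\sup_{x\in[-1,1]}\prob^x(\tau_1>1)\le1-c_0<1$. Applying the Markov property at the times $1,2,\dots$ and noting that $X_{n-1}\in(-1,1)$ on $\{\tau_1>n-1\}$ gives $\prob(\tau_1>n)=\e[\ind_{\{\tau_1>n-1\}}\prob^{X_{n-1}}(\tau_1>1)]\le(1-c_0)\prob(\tau_1>n-1)$, so $\prob(\tau_1>n)\le(1-c_0)^n\to0$ and $\prob(\tau_1<\infty)=1$. (The same estimate holds from any starting point in $[-1,1]$, a fact reused below.)

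The equality in \eqref{tip2} is pure symmetry: $d(-X_t)=-(-X_t)\,dt+\sqrt2\,d(-B_t)$ with $-B$ a Brownian motion and $-X_0=0$, so $(-X_t)_t\overset{d}{=}(X_t)_t$; since $\tau_1$ is invariant under $x\mapsto-x$ while $X_{\tau_1}\mapsto-X_{\tau_1}$, the pairs $(\tau_1,X_{\tau_1})$ and $(\tau_1,-X_{\tau_1})$ have the same law, which is exactly the claimed identity. For the strict positivity I would combine a scale-function computation with a small-time estimate. Let $s(y):=\int_0^y e^{z^2/2}\,dz$ be the scale function of $\mathcal L=-y\partial_y+\partial_{yy}$, so $\mathcal L s=0$ and $s$ is strictly increasing; by It\^o's formula $s(X_{t\wedge\tau_1})$ is a bounded martingale, and optional stopping at $\tau_1$ (a.s.\ finite from every point of $[-1,1]$ by the above) yields, for $y\in(-1,1)$,
\beq
\prob^y(X_{\tau_1}=1)=\frac{s(y)-s(-1)}{s(1)-s(-1)}\in(0,1).
\eeq

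Conditioning at time $\tfrac1{16}$ through the Markov property then gives
\beq
\prob\!\left(\tau_1>\tfrac1{16},\,X_{\tau_1}=1\right)=\e\!\left[\ind_{\{\tau_1>1/16\}}\,\frac{s(X_{1/16})-s(-1)}{s(1)-s(-1)}\right],
\eeq
and on $\{\tau_1>\tfrac1{16}\}$ one has $X_{1/16}\in(-1,1)$, so the integrand is strictly positive there; hence the right-hand side is positive as soon as $\prob(\tau_1>\tfrac1{16})>0$. To get the latter explicitly, write $X_t=\sqrt2\,e^{-t}M_t$ with $M_t=\int_0^t e^s\,dB_s$ a martingale of quadratic variation $\langle M\rangle_{1/16}=\tfrac12(e^{1/8}-1)$; since $e^{-t}\le1$ we get $\sup_{t\le1/16}|X_t|\le\sqrt2\sup_{t\le1/16}|M_t|$, and therefore, by Chebyshev and Doob's $L^2$ inequality,
\beq
\prob\!\left(\tau_1\le\tfrac1{16}\right)\le\prob\!\left(\sup_{0\le t\le1/16}M_t^2\ge\tfrac12\right)\le2\,\e\!\left[\sup_{0\le t\le1/16}M_t^2\right]\le8\,\e\!\left[M_{1/16}^2\right]=4\big(e^{1/8}-1\big)<1,
\eeq
so $\prob(\tau_1>\tfrac1{16})>0$, which completes the proof.

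The step I expect to be the crux is this last one: obtaining $\prob(\tau_1>\tfrac1{16})>0$ with a genuinely explicit constant rather than a soft continuity-of-paths argument, since this is where the specific value $\tfrac1{16}$ enters — it is chosen precisely so that the crude bound $4(e^{1/8}-1)$ stays below $1$. Everything else (the Gaussian tail estimates, the Markov-step iteration in \eqref{tip1}, the reflection symmetry, and the scale-function boundary-hitting identity) is standard.
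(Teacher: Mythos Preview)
Your proof is correct and follows essentially the same route as the paper: the Markov iteration for \eqref{tip1} is a spelled-out version of the paper's one-line remark, and the key bound $\prob(\tau_1\le\tfrac1{16})\le 4(e^{1/8}-1)<1$ via Doob's $L^2$ inequality on the martingale $\int_0^t e^s\,dB_s$ is identical. The only difference is that your scale-function/conditioning step is unnecessary: once you have the reflection symmetry and $\prob(\tau_1<\infty)=1$, the events $\{\tau_1>\tfrac1{16},X_{\tau_1}=1\}$ and $\{\tau_1>\tfrac1{16},X_{\tau_1}=-1\}$ partition $\{\tau_1>\tfrac1{16}\}$ into two equiprobable pieces, so $\prob(\tau_1>\tfrac1{16},X_{\tau_1}=1)=\tfrac12\prob(\tau_1>\tfrac1{16})>0$ directly --- this is the shortcut the paper takes.
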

\begin{proof}
\eqref{tip1} follows from the fact that $\tau_1<\inf\{n\in \mathbb N,|X_n|>1\}$, the Markov property and the Gaussian transition distribution of $X_t$. As for \eqref{tip2}, by symmetry, we only need to prove
\beq
\label{tip3}
\prob(\tau_1>\frac{1}{16})>0.
\eeq
By \eqref{OU_sol}, $X_t=\sqrt 2 \int^t_0 e^{-(t-s)}dB_s$ and thus $$
\{\tau_1\le \frac{1}{16}\}=\{\max_{t\le \frac{1}{16}}|X_t|\ge 1\}\subset\{\max_{t\le \frac{1}{16}}\left|\sqrt 2 \int^t_0 e^{-(t-s)}dB_s\right|\ge 1\}.
$$
Now note that $\int^t_0 e^sdB_s$ is a martingale and then
\beq
\begin{aligned}
&\prob(\tau_1\le \frac{1}{16})\le \prob(\max_{t\le \frac{1}{16}}\left|\sqrt 2 \int^t_0 e^{-(t-s)}dB_s\right|\ge 1)\\
\le& 2\e\left(\max_{t\le \frac{1}{16}}\left| \int^t_0 e^sdB_s\right|\right)^2\le 8\e\left(\int^{\frac{1}{16}}_0 e^sdB_s\right)^2,
\end{aligned}
\eeq
where the last two inequalities follows from the Markov inequality and Doob's inequality respectively. Noting that
$$
\e\left(\int_0^{\frac{1}{16}}e^sdB_s\right)^2
=\int_0^{\frac{1}{16}}e^{2s}ds=\frac12(e^{\frac18}-1)<\frac18
$$
and thus \eqref{tip3} is valid.

\end{proof}
With the above lemma, now we prove \eqref{constant<1 equivalent} that is equivalent with \eqref{constant<1}.
\begin{proof}
[Proof of \eqref{constant<1}:]
We let $Y_t$ be an O-U process starting at $-1$ and derive stopping time $\tau'_1=\inf\{t\ge 0, Y_t=0\}$. Then by the recurrence of O-U process,
\beq
\label{tip4}
\prob(\tau'_1<+\infty)=1.
\eeq
Next we define an increasing sequence of stopping times as follows:
\begin{equation*}
\begin{aligned}
S'_0&=0\\
S_1&=\inf\{t\ge 0, |X_t|=1\},\\
S'_1&=\inf\{t\ge S_1, X_t=0\},\\
S_2&=\inf\{t\ge S'_1, |X_t|=1\},\\
S'_2&=\inf\{t\ge S_2, X_t=0\},\\
\vdots
\end{aligned}
\end{equation*}
Combining \eqref{tip1}, \eqref{tip4} and the  Strong Markov Property of the O-U process $S_n, S'_n<+\infty$ for all $n$. At the same time,
$$
S_1-S'_0,\ S'_1-S_1,\  S_2-S'_1,\ \cdots
$$
are independent to each other while
\begin{equation*}
\begin{aligned}
&S_n-S'_{n-1}\overset{d}{=}\tau_1,\\
&S'_n-S_n\overset{d}{=}\tau'_1.
\end{aligned}
\end{equation*}
Thus for the fixed $T\in (0,+\infty)$ above, let $N_0=\lfloor T \rfloor+1$ and then
$$
\{T_1>T\}\supset\cap_{i=1}^{16N_0}\{S_i-S'_{i-1}>\frac{1}{16},X_{S_i}=-1,S'_i-S_i<+\infty\}
$$
Using the strong Markov property, we have
\begin{equation*}
\begin{aligned}
\prob(T_1>T)\ge& \prob\left(\cap_{i=1}^{16N_0}\{S_i-S'_{i-1}>\frac{1}{16},X_{S_i}=-1,S'_i-S_i<+\infty\}\right)\\
=&\prod_{n=1}^{16N_0}\prob(\tau_1>\frac{1}{16}, X_{\tau_1}=-1)>0,
\end{aligned}
\end{equation*}
which completes the proof of \eqref{constant<1}.

\end{proof}

\bibliographystyle{plain}

\bibliography{cite}
\end{document}